\documentclass[10pt,twoside]{siamart1116}

\usepackage[english]{babel}
\usepackage{graphicx,epstopdf,epsfig}
\usepackage{amsfonts,epsfig,fancyhdr,graphics, hyperref,amsmath,amssymb}

\newcommand{\rank}{\mathrm{rank}}

\usepackage{amsfonts}      
\usepackage{amsmath,amscd}
\usepackage{amssymb} 
\usepackage{comment}

\def\endproof{\ifmmode
  \else \leavevmode\unskip\penalty9999 \hbox{}\nobreak\hfill
  \fi
  \quad\hbox{\openbox}}
\newcommand{\openbox}{\leavevmode
  \hbox to.77778em{%
  \hfil\vrule
  \vbox to.675em{\hrule width.6em\vfil\hrule}%
  \vrule\hfil}}
  
  \usepackage{amssymb,amsmath,esint}

\newcommand\R{\mathbb R}
\newcommand\bmat[1]{\begin{bmatrix} #1 \end{bmatrix}}
\DeclareMathOperator\tr{tr}
\DeclareMathOperator\Img{Im}
\DeclareMathOperator\diag{diag}
\DeclareMathOperator\adj{adj}

\setlength{\textheight}{210mm}
\setlength{\textwidth}{165mm}
\topmargin = -10mm

\setlength{\parskip}{.1in}


\newcommand{\Names}{M.I. Bueno, Benjamin Faktor, Rhea Kommerell, Runze Li,  Joey Veltri}
\newcommand{\Title}{Linear maps preserving the Lorentz\\spectrum of  $\mathbf{3 \times 3}$ matrices}



\renewtheorem{theorem}{Theorem}[section]

\begin{document}

\bibliographystyle{plain}

\setcounter{page}{1}

\thispagestyle{empty}

\title{\Title}
 
\author{
M. I. Bueno\thanks{Department of Mathematics,
 University of California Santa Barbara, Santa Barbara, CA 93106, USA
(mbueno@ucsb.edu). The work of the second and third authors was partially supported by the NSF grant DMS-1850663. This publication is also part of the ``Proyecto de I+D+i PID2019-106362GB-I00 financiado
por MCIN/AEI/10.13039/501100011033''.}
\and
Ben Faktor\thanks{Department of Mathematics,
 University of California Santa Barbara, Santa Barbara, CA 93106, USA
(benjaminfaktor@gmail.com).}
\and
Rhea Kommerell\thanks{Department of Mathematics, University of California Berkeley, Berkeley, CA, USA (rkommerell@berkeley.edu)} 
\and
Runze Li\thanks{Department of Mathematics,
 University of California Santa Barbara, Santa Barbara, CA 93106, USA
(runzeli278@umail.ucsb.edu).}
\and 
Joey Veltri\thanks{Department of Mathematics, The Pennsylvania State University, State College, PA 16801, USA (jveltri@psu.edu)} }

\markboth{\Names}{\Title}

\maketitle

\begin{abstract}
For a given  $3\times 3$ real matrix $A,$ the eigenvalue complementarity problem relative to the Lorentz cone consists of finding a real number $\lambda$ and a nonzero vector $x\in \mathbb{R}^3$ such that $x^T(A-\lambda I)x=0$ and both $x$ and $(A-\lambda I)x$ lie in the Lorentz cone, which is comprised of  all vectors in $\mathbb{R}^3$ forming a $45^\circ$ or smaller angle with the positive $z$-axis. We refer to the set of all solutions $\lambda$ to this  eigenvalue complementarity problem as the Lorentz spectrum of $A.$ Our work concerns the characterization of the linear preservers of the Lorentz spectrum on the space $M_3$ of $3\times 3$ real matrices, that is, the linear maps $\phi:M_3\rightarrow M_3$  such that the Lorentz spectra of $A$ and $\phi(A)$ are the same for all $A.$ We have proven that all such linear preservers take the form $\phi(A)=(Q\oplus [1])A(Q^T\oplus [1]),$ where $Q$ is an orthogonal $2\times 2$ matrix.   \end{abstract}

\begin{keywords}
Lorentz cone, Lorentz  eigenvalues, linear preservers, $3 \times 3$ matrices.
\end{keywords}
\begin{AMS}
15A18, 58C40.
\end{AMS}

\section{Introduction}\label{sec:introduction}

Let $M_n$ denote the vector space of  $n \times n$ real matrices. For a given matrix $A \in M_n$ and a closed convex cone $K \subseteq \mathbb{R}^n,$ the \textit{eigenvalue complementarity problem} consists of finding $\lambda \in \mathbb{R}$ and nonzero $x \in \mathbb{R}^n$ satisfying 
\begin{equation}
    x \in K, \quad (A-\lambda I)x \in K^*, \quad \textrm{and} \quad x^T (A-\lambda I)x = 0,
\end{equation}
where $K^*$ denotes the dual cone of $K$, that is,
$$K^*=\{y\in \mathbb{R}^n:\; x^Ty\geq 0, \; \forall x\in K\}.$$
This problem is a generalization of the standard eigenvalue problem for which $K=\mathbb{R}^n$ and $K^* = \{0\}.$

We are interested in the eigenvalue complementarity  problem on the \textit{Lorentz cone} $\mathcal{K}_n$, given by
$$
\mathcal{K}_n = \left\{ \begin{bmatrix} \xi \\ \eta\end{bmatrix}:\; \xi \in \mathbb{R}^{n-1}, \ \eta \in \mathbb{R}, \ \|\xi\|_2 \leq \eta \right\}.
$$
Notably, the Lorentz cone is self-dual, i.e., $\mathcal{K}_n = (\mathcal{K}_n)^*.$ Thus, the eigenvalue complementarity problem for the Lorentz cone  consists of finding $\lambda\in \mathbb{R}$ and nonzero $x\in \mathbb{R}^n$ satisfying
$$
x \in \mathcal{K}_n, \quad (A-\lambda I)x \in \mathcal{K}_n,\quad \textrm{and} \quad x^T (A-\lambda I)x = 0.
$$
Any such solution $\lambda$ is called a \textit{Lorentz eigenvalue} of $A,$ and any associated $x$ is called a \textit{Lorentz eigenvector}. The collection of all such $\lambda$ is the \textit{Lorentz spectrum} of $A,$ denoted $\sigma_L(A)$. If $\lambda$ has an associated Lorentz eigenvector in the interior (resp.\ boundary) of $\mathcal{K}_n,$ it is called an \textit{interior} (resp.\ \textit{boundary}) \textit{Lorentz eigenvalue}. The collection of all interior (resp.\ boundary) Lorentz eigenvalues is called the \textit{interior} (resp.\ \textit{boundary}) \textit{Lorentz spectrum} of $A$, denoted $\sigma_{int}{(A)}$ (resp.\ $\sigma_{bd}(A)$). Note that $\sigma_L(A)$ is the (not necessarily disjoint) union of $\sigma_{int}(A)$ and $\sigma_{bd}(A).$ One distinctive property of the L-spectrum compared to the standard spectrum of a matrix is that it can be infinite. For the sake of brevity, throughout this paper we will write L-eigenvalue, L-eigenvector and L-spectrum in place of Lorentz eigenvalue, Lorentz eigenvector, and Lorentz spectrum, respectively. 

The
Lorentz cone is an important object of study in several areas of math, especially in optimization. The associated optimization models have applications in several fields such as
 engineering, finance, and  control theory. The Lorentz
cone is also helpful to understand the behavior of some 
linear maps called Z-transformations. Examples of papers that study applications of the Lorentz cone are \cite{second-order, nemeth}.

Recently \cite{maribel1, maribel2,seeger-torki2}, there has been particular interest in the characterization of the linear maps $\phi:M_n \rightarrow M_n$ which preserve the Lorentz spectrum of all matrices, that is, $\sigma_L(A) = \sigma_L(\phi(A))$ for all $ A \in M_n.$ We call such maps  \textit{linear preservers of the Lorentz spectrum}. In studying this problem, we assume that $n\geq 3$. For $n=2$, the Lorentz cone is a polyhedral cone, which is not the case for $n\geq 3.$ The characterization of the linear preservers of the Lorentz spectrum for $n=2$ is an immediate consequence of the characterization of the linear preservers of the Pareto spectrum since the Pareto cone, or nonnegative orthant, is a rotation of the Lorentz cone in $\R^2$ by $45^\circ$ \cite{pareto}.

For $n\geq 3$, some partial results have been proven in the literature. 
\begin{theorem}\label{thm:bijective}\cite{maribel1}
Let $\phi: M_n \rightarrow M_n$ be a linear preserver of the L-spectrum. Then $\phi$ is bijective and $\phi(I_n)= I_n.$
\end{theorem}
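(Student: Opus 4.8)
The plan is to isolate one rigidity lemma about $\sigma_L$ and then deduce both conclusions by soft arguments. First I would record that $\sigma_L$ is equivariant under translations and positive scalings: $\sigma_L(\alpha A+\beta I_n)=\alpha\,\sigma_L(A)+\beta$ for all $\alpha>0$ and $\beta\in\mathbb{R}$, which is immediate from the definition since $(\alpha A+\beta I_n)-(\alpha\lambda+\beta)I_n=\alpha(A-\lambda I_n)$ and neither membership in $\mathcal{K}_n$ nor the identity $x^T(A-\lambda I_n)x=0$ is affected by scaling by $\alpha>0$ or by translating $\lambda$; in particular $\sigma_L(\beta I_n)=\{\beta\}$. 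I would also note that every L-eigenvalue of $A$ is a Rayleigh quotient $x^TAx/\|x\|_2^2$ over $x\in\mathcal{K}_n\setminus\{0\}$, so $\sigma_L(A)\subseteq[-\|A\|,\|A\|]$, and that an L-eigenvector lying in the interior of $\mathcal{K}_n$ is necessarily a genuine eigenvector (if $x$ is interior and $(A-\lambda I_n)x\in\mathcal{K}_n\setminus\{0\}$, then $x^T(A-\lambda I_n)x>0$ by self-duality, a contradiction); hence $\sigma_{int}(A)$ is exactly the set of eigenvalues of $A$ having an eigenvector in the interior of $\mathcal{K}_n$.

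\noindent\textbf{Key Lemma.}\ \emph{If $B\in M_n$ satisfies $\sigma_L(X+tB)=\sigma_L(X)$ for all $X\in M_n$ and all $t\in\mathbb{R}$, then $B=0$.}

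Granting this, the theorem follows quickly. If $B\in\ker\phi$, then $tB\in\ker\phi$ for all $t$, so $\sigma_L(X+tB)=\sigma_L(\phi(X+tB))=\sigma_L(\phi(X))=\sigma_L(X)$ for all $X,t$; by the Key Lemma $B=0$, hence $\ker\phi=\{0\}$, and $\phi$ is bijective since $M_n$ is finite-dimensional. For the fixed point, set $B:=\phi(I_n)-I_n$; using surjectivity write an arbitrary $X\in M_n$ as $X=\phi(A)$, so that $X+tB=\phi(A)+t\phi(I_n)-tI_n=\phi(A+tI_n)-tI_n$ and therefore $\sigma_L(X+tB)=\sigma_L(\phi(A+tI_n))-t=\sigma_L(A+tI_n)-t=\sigma_L(A)=\sigma_L(\phi(A))=\sigma_L(X)$, by equivariance and the preserver property. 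The Key Lemma gives $B=0$, i.e.\ $\phi(I_n)=I_n$.

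To prove the Key Lemma I would feed the hypothesis the rank-one matrices $X_v:=vv^T$, where $v$ ranges over the interior of $\mathcal{K}_n$. Each $X_v$ is symmetric with eigenvalue $\lambda_0:=\|v\|_2^2$ (simple, eigenvector $v$) and eigenvalue $0$ on $v^\perp$. Since $v$ is interior to the self-dual cone, $v^\perp\cap\mathcal{K}_n=\{0\}$, so $\lambda_0$ is the only interior L-eigenvalue of $X_v$; every boundary L-eigenvalue of $X_v$ equals $(v^Tx)^2/\|x\|_2^2$ for some $x\in\partial\mathcal{K}_n$, and since such $x$ stay uniformly away from $\pm v$ these values are at most $\lambda_0-\delta$ for some $\delta>0$. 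Hence $\lambda_0=\max\sigma_L(X_v)$ and is an isolated point of $\sigma_L(X_v)$. Now perturb: as $\lambda_0$ is a simple eigenvalue of $X_v$, standard analytic perturbation theory gives, for $|t|$ small, a simple eigenvalue $\lambda(t)$ of $X_v+tB$, analytic in $t$, with $\lambda(0)=\lambda_0$ and eigenvector $v(t)\to v$; for $|t|$ small $v(t)$ is still interior to $\mathcal{K}_n$, so $\lambda(t)\in\sigma_{int}(X_v+tB)\subseteq\sigma_L(X_v+tB)=\sigma_L(X_v)$, and since $\lambda$ is continuous and $\lambda_0$ is isolated in $\sigma_L(X_v)$ we conclude $\lambda(t)\equiv\lambda_0$ near $0$, so all $t$-derivatives of $\lambda$ at $0$ vanish. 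As $X_v$ is symmetric, its left and right eigenvectors at $\lambda_0$ both equal $v$, so the standard first-order formula gives $\lambda'(0)=v^TBv/\|v\|_2^2=0$; since $v$ runs over an open set this forces the symmetric part of $B$ to vanish, i.e.\ $B^T=-B$. With $B$ skew we have $\lambda'(0)=0$ automatically, and the second-order coefficient of $\lambda(t)$ then simplifies to $v^TB^2v/\|v\|_2^4=-\|Bv\|_2^2/\|v\|_2^4$; its vanishing for all $v$ in an open set gives $Bv=0$ there, hence $B=0$.

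The subtle point — and the main obstacle — is this last step. The map $\sigma_L(\cdot)$ is neither continuous nor locally constant in the matrix (it may be finite or infinite, and can jump under arbitrarily small perturbations), so one cannot differentiate $\sigma_L$ directly. The device is to choose test matrices carrying a single, robustly-interior, \emph{simple} eigenvalue that ordinary analytic perturbation theory can follow and that the hypothesis then \emph{freezes}, thereby turning rigidity of the whole L-spectrum into the vanishing of the Taylor coefficients of one scalar function. The only genuinely cone-specific ingredient is the isolation of $\|v\|_2^2$ in $\sigma_L(vv^T)$, i.e.\ controlling the boundary L-spectrum of $vv^T$.
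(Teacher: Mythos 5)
Your argument is correct, but it is worth noting that the paper does not actually prove this statement: Theorem~\ref{thm:bijective} is quoted from \cite{maribel1}, so the only comparison available is with that reference, and your proof is a genuinely self-contained alternative. Your route isolates a rigidity lemma (``if $\sigma_L(X+tB)=\sigma_L(X)$ for all $X,t$ then $B=0$'') and derives both injectivity and $\phi(I_n)=I_n$ from it via the translation equivariance $\sigma_L(A+\beta I_n)=\sigma_L(A)+\beta$; the lemma itself is proved by testing on the rank-one matrices $vv^T$ with $v$ interior to $\mathcal{K}_n$, where the cone-specific work is exactly where you locate it: self-duality gives $v^\perp\cap\mathcal{K}_n=\{0\}$, so $\|v\|^2$ is the unique interior L-eigenvalue, and a compactness/Cauchy--Schwarz bound pushes all boundary L-eigenvalues below $\|v\|^2-\delta$, making $\|v\|^2$ isolated in $\sigma_L(vv^T)$; analytic perturbation of this simple eigenvalue then freezes $\lambda(t)$, and the vanishing of the first- and second-order coefficients forces first $B+B^T=0$ and then $Bv=0$ on an open set, hence $B=0$. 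I checked the two perturbation coefficients: $\lambda'(0)=v^TBv/\|v\|^2$ and, for skew $B$, the second-order coefficient $\tfrac{1}{\|v\|^2}\hat v^TB\bigl(I-\hat v\hat v^T\bigr)B\hat v=v^TB^2v/\|v\|^4=-\|Bv\|^2/\|v\|^4$ (with $\hat v=v/\|v\|$), which matches what you wrote, and the deduction ``quadratic form vanishes on an open set $\Rightarrow$ symmetric part is zero'' is sound. Two points you should make explicit, though both are routine: (i) the perturbed eigenvalue $\lambda(t)$ of the nonsymmetric matrix $vv^T+tB$ remains real for small real $t$ (otherwise its conjugate would be a second eigenvalue near the simple eigenvalue $\|v\|^2$), so a real eigenvector $v(t)\to v$ can be chosen and stays interior; (ii) the second-order formula should be cited or derived via the reduced resolvent, as it is doing real work. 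Compared with the approach in \cite{maribel1}, which argues through explicitly computed L-spectra of special families of matrices, your perturbation-theoretic argument is less computational and makes transparent exactly which structural facts about $\mathcal{K}_n$ (self-duality and openness of the interior) are needed, at the cost of invoking analytic perturbation theory.
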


\begin{theorem}\label{QQ^TLpres}\cite{maribel1}
Let $Q$ be an orthogonal $(n-1) \times (n-1)$ matrix, and let $\phi:M_n\rightarrow M_n$ be the linear map given by 
\begin{equation}\label{linear-Q}
\phi(A) = \begin{bmatrix} Q & 0 \\ 0 & 1 \end{bmatrix} A \begin{bmatrix} Q^T & 0 \\ 0 & 1 \end{bmatrix}.
\end{equation}
Then $\phi$ is a linear preserver of the L-spectrum. 

Conversely, if $n\geq 3$, $\phi: M_n \to M_n$ is a linear preserver of the L-spectrum, and $\phi(A)= PAQ$ for some fixed $n\times n$ matrices $P$ and $Q$, then $\phi$ satisfies \eqref{linear-Q}. 
\end{theorem}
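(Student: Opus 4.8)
The statement splits into a routine direction and a substantial one, and I would treat them separately.

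\emph{Sufficiency.} Put $U := Q\oplus[1]$. Since $\|Q\xi\|_2=\|\xi\|_2$ for all $\xi\in\R^{n-1}$, the orthogonal matrix $U$ satisfies $U\mathcal{K}_n=\mathcal{K}_n$ and preserves both $\mathrm{int}\,\mathcal{K}_n$ and $\partial\mathcal{K}_n$. If $(\lambda,x)$ is an L-eigenpair of $A$, then $y:=Ux$ is a nonzero vector of $\mathcal{K}_n$ with $(\phi(A)-\lambda I)y=U(A-\lambda I)x\in U\mathcal{K}_n=\mathcal{K}_n$ (using $U^TU=I$) and $y^T(\phi(A)-\lambda I)y=x^T(A-\lambda I)x=0$, so $\lambda\in\sigma_L(\phi(A))$; hence $\sigma_L(A)\subseteq\sigma_L(\phi(A))$. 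Since $\phi^{-1}(A)=(Q^T\oplus[1])A(Q\oplus[1])$ has the same form, the reverse inclusion follows and $\phi$ preserves the L-spectrum.

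\emph{Necessity: setup.} Now suppose $\phi(A)=PAQ$ is an L-spectrum preserver. By Theorem~\ref{thm:bijective}, $I=\phi(I)=PQ$, so $Q=P^{-1}$ and $\phi(A)=PAP^{-1}$ is a similarity; moreover $\phi$ and $\phi^{-1}$ (conjugation by $P^{-1}$) both preserve $\sigma_L$. It suffices to show that $P$ is a nonzero scalar multiple of an orthogonal matrix $U$ with $U\mathcal{K}_n=\mathcal{K}_n$: the scalar cancels in $\phi(A)=PAP^{-1}=UAU^{-1}$, every orthogonal automorphism of $\mathcal{K}_n$ must fix the axis $e_n$ of the cone and hence has the form $U=Q'\oplus[1]$ with $Q'\in O(n-1)$, and $U^{-1}=U^T=Q'^T\oplus[1]$, yielding exactly \eqref{linear-Q}.

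\emph{Necessity: two reductions via test matrices.} First I would show $P$ maps $\mathrm{int}\,\mathcal{K}_n$ onto $\mathrm{int}\,\mathcal{K}_n$ or onto $\mathrm{int}(-\mathcal{K}_n)$; replacing $P$ by $-P$ (which leaves $\phi$ unchanged) we may then assume $P\mathcal{K}_n=\mathcal{K}_n$, whence $P^T\mathcal{K}_n=\mathcal{K}_n$ by self-duality. This comes from rank-one matrices $A=vw^T$ with $\lambda_0:=w^Tv\neq 0$: the $\lambda_0$-eigenspace of $A$ is the line $\R v$, so $\lambda_0\in\sigma_{int}(A)$ precisely when $v$ lies in the open double cone $\mathrm{int}\,\mathcal{K}_n\cup\mathrm{int}(-\mathcal{K}_n)$, while the $\lambda_0$-eigenspace of $\phi(A)=(Pv)(P^{-T}w)^T$ is $\R Pv$; choosing $v,w$ so that the remaining L-eigenvalues of $A$ and of $\phi(A)$ do not obstruct the membership of $\lambda_0$ and invoking preservation of $\sigma_L$ gives that $v$ lies in the open double cone iff $Pv$ does, for every $v$. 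Second, assuming now $P\mathcal{K}_n=\mathcal{K}_n=P^T\mathcal{K}_n$, the matrix $S:=P^TP$ is a symmetric positive-definite automorphism of $\mathcal{K}_n$; since $P$ is a cone automorphism one checks $\sigma_{int}(\phi(A))=\sigma_{int}(A)$, so only the boundary part carries information. Using that for $x\in\partial\mathcal{K}_n\setminus\{0\}$ the set $\{z\in\mathcal{K}_n:x^Tz=0\}$ is a single ray $\R_{\ge0}\,x^\sharp$, with $x^\sharp=((-I_{n-1})\oplus[1])x$, one finds $\lambda\in\sigma_{bd}(\phi(A))$ iff $S(A-\lambda I)x\in\R_{\ge0}\,x^\sharp$ for some $x\in\partial\mathcal{K}_n\setminus\{0\}$. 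Feeding in the matrices $A=\lambda_0 I+t\,x_0^\sharp x_0^T$ (rank-one nilpotent perturbations of $\lambda_0 I$ supported on a boundary ray, for which $\sigma_L(A)=\{\lambda_0\}$) and letting $x_0$ range over $\partial\mathcal{K}_n$, preservation of $\sigma_L$ forces $S=cI$ for some $c>0$; then $P/\sqrt{c}$ is orthogonal and, with the first reduction, an automorphism of $\mathcal{K}_n$, completing the proof.

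\emph{Main obstacle.} The crux is the second reduction, namely showing $S=cI$. Cone preservation alone does not pin $P$ down, because the automorphism group of $\mathcal{K}_n$ is far larger than $\R_{>0}\cdot(O(n-1)\times\{1\})$: it contains the Lorentz-type ``boosts'', which are symmetric positive-definite but not scalar multiples of orthogonal matrices and which do preserve the cone. Eliminating these genuinely requires the boundary L-spectrum (equivalently, the interval structure of $\sigma_L$ on suitable rank-one matrices), and the verification breaks into several cases according to the position of the relevant boundary eigenvectors relative to the test matrix.
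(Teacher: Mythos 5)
Your sufficiency argument and the first reduction of the converse are fine: conjugation by the orthogonal matrix $Q\oplus[1]$ clearly preserves all three defining conditions of an L-eigenpair, and Theorem~\ref{thm:bijective} gives $PQ=\phi(I)=I$, so the map is the similarity $A\mapsto PAP^{-1}$. (Note, for what it is worth, that this theorem is quoted from \cite{maribel1} and not proved in the present paper, so there is no internal proof to compare against.) The problem is the step you yourself flag as the crux, namely showing $S:=P^TP=cI$, i.e.\ excluding the Lorentz boosts: the test family you propose provably cannot do this. If $P$ is any automorphism of $\mathcal{K}_n$ (boost or not), then for $A=\lambda_0 I+t\,x_0^{\sharp}x_0^{T}$ one has $\phi(A)=\lambda_0 I+t\,(Px_0^{\sharp})(P^{-T}x_0)^{T}$, where $Px_0^{\sharp}\in\partial\mathcal{K}_n$, $P^{-T}x_0\in\partial\mathcal{K}_n$ (since $P^T$ is also a cone automorphism, by self-duality), and $(P^{-T}x_0)^{T}(Px_0^{\sharp})=x_0^{T}x_0^{\sharp}=0$; because the only vectors of $\mathcal{K}_n$ orthogonal to the boundary vector $Px_0^{\sharp}$ form the ray $\R_{\ge 0}(Px_0^{\sharp})^{\sharp}$, the image is again of the exact form $\lambda_0 I+t'\,y\,(y^{\sharp})^{T}$ with $y\in\partial\mathcal{K}_n$ and $t'>0$ (for $t>0$). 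A direct computation (or a rotation reducing $y$ to $x_0^{\sharp}$) shows every matrix of this form has $\sigma_L=\{\lambda_0\}$, so $\sigma_L(\phi(A))=\sigma_L(A)$ holds automatically for \emph{every} cone automorphism $P$, boosts included. These matrices therefore impose no constraint whatsoever on $S$, and the asserted conclusion ``preservation of $\sigma_L$ forces $S=cI$'' does not follow from them.

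A secondary, smaller gap is in your first reduction: from $\lambda_0\in\sigma_L(\phi(A))$ for $A=vw^{T}$ you cannot immediately conclude that $Pv$ lies in the open double cone, because $\lambda_0$ could occur as a \emph{boundary} L-eigenvalue of $\phi(A)$ with $s>0$ (a nonstandard eigenvalue, not attached to the eigenvector $Pv$ at all); the phrase ``choosing $v,w$ so that the remaining L-eigenvalues do not obstruct'' is precisely where the work lies, since rank-one matrices can carry intervals of boundary L-eigenvalues (cf.\ \cref{symmetricua}). To repair the argument you need test matrices whose L-spectrum genuinely changes under a boost — for instance matrices with rich boundary spectrum such as those of \cref{spec-inf}, or symmetric rank-one matrices $uu^{T}$ with $u$ near the boundary — rather than the nilpotent perturbations $\lambda_0 I+t\,x_0^{\sharp}x_0^{T}$, whose L-spectrum is invariant under the full automorphism group of the cone.
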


In \cite{maribel1} it was conjectured that for  $n\geq 3$,  every linear preserver of the L-spectrum must have the form \eqref{linear-Q}. In this paper, we show that this conjecture is true for $n=3$, providing a full characterization of the linear preservers of the L-spectrum in $M_3$.  Moreover, we have shown that the linear preservers of the L-spectrum on $M_3$ also preserve the \textit{nature} of the L-eigenvalues, that is, $\sigma_{int}(A) = \sigma_{int}(\phi(A))$ and  $\sigma_{bd}(A) = \sigma_{bd}(\phi(A))$ for all $A\in M_3.$ The strategy used to prove the characterization of the linear preservers of the L-spectrum on $M_3$ does not seem to be easily generalizable to $n>3$ since it crucially relies on the restrictive form of $3 \times 3$ matrices with infinitely many L-eigenvalues.

The paper is organized as follows: In \cref{sec:background}, we give some properties of the L-spectrum of a matrix as well as a full characterization of the boundary L-eigenvalues. In \cref{infinite-eigenval}, we characterize the $3\times 3$ real matrices with infinitely many L-eigenvalues. In \cref{sec:main-thm}, we present the main results of this paper. The proof of one of these results (\cref{main-thm}) is somewhat cumbersome and is presented in \cref{sec:proof1} and \cref{sec:proof2}.

\section{The Lorentz spectrum of a matrix}\label{sec:background}

In this section, we discuss the Lorentz spectrum of a matrix in more detail. As mentioned in \cref{sec:introduction},  we assume that $n\geq 3$. We also use the notation $\|\cdot\|$ to denote the Euclidean norm of a vector since this is the only vector norm we use in this paper.

Since $\mathcal{K}_n$ is a cone, it is closed under positive scalar multiplication. That is, if $x\in \mathcal{K}_n$ and $\alpha \geq 0$, then $\alpha x \in \mathcal{K}_n$. 
Thus,  if $x=[\tilde{x}^T, x_n]^T\in \mathcal{K}_n$  is an L-eigenvector of a matrix $A$, then $x/x_n$ is also an L-eigenvector of $A$ associated with the same L-eigenvalue since $x_n>0$. Moreover, if $x$ is in the interior (resp.\ boundary) of $\mathcal{K}_n$, so is $x/x_n.$
Thus, from the definition of interior and boundary L-eigenvalues, we have the following results. 
\begin{theorem}\label{thm-interior}
$\lambda\in \mathbb{R}$ is an interior L-eigenvalue of $A \in M_n$ if and only if there exists $ \xi \in \mathbb{R}^{n-1}$ such that $\|\xi\| < 1$ and
$$
(A-\lambda I) \begin{bmatrix} \xi \\ 1\end{bmatrix} = 0.
$$
\end{theorem}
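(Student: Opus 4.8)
The plan is to prove both implications directly from the definition of an interior L-eigenvalue, using only two elementary facts: the self-duality $\mathcal{K}_n = \mathcal{K}_n^*$ noted in the introduction, and the scaling observation made immediately before the theorem.

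For the forward direction, suppose $\lambda$ is an interior L-eigenvalue of $A$, with an associated L-eigenvector $x$ in the interior of $\mathcal{K}_n$. Writing $x = [\tilde{x}^T, x_n]^T$, the fact that $x$ lies in the interior of $\mathcal{K}_n$ means $\|\tilde{x}\| < x_n$, and in particular $x_n > 0$; by the scaling remark preceding the theorem we may replace $x$ by $x/x_n$ and assume $x = [\xi^T, 1]^T$ with $\|\xi\| < 1$. The complementarity conditions then read $(A-\lambda I)x \in \mathcal{K}_n$ and $x^T(A-\lambda I)x = 0$. The key step is to show that these two conditions, together with $x$ being in the interior of $\mathcal{K}_n$, force $(A-\lambda I)x = 0$. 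This follows from a general strict-positivity fact about $\mathcal{K}_n$: if $x = [\xi^T, x_n]^T$ lies in the interior of $\mathcal{K}_n$ (so $\|\xi\| < x_n$) and $y = [\eta^T, y_n]^T \in \mathcal{K}_n$ (so $\|\eta\| \le y_n$), then Cauchy--Schwarz gives $x^T y = \xi^T\eta + x_n y_n \ge x_n y_n - \|\xi\|\,\|\eta\|$; if moreover $y \neq 0$ then $y_n > 0$ and $\|\xi\|\,\|\eta\| \le \|\xi\|\,y_n < x_n y_n$, so $x^T y > 0$. Applying this with $y = (A-\lambda I)x$ and using $x^T y = 0$ yields $(A-\lambda I)x = 0$, as desired.

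For the converse, assume there is $\xi \in \mathbb{R}^{n-1}$ with $\|\xi\| < 1$ and $(A-\lambda I)[\xi^T, 1]^T = 0$. Set $x := [\xi^T, 1]^T$, which lies in the interior of $\mathcal{K}_n$. Then $(A-\lambda I)x = 0 \in \mathcal{K}_n$ and $x^T(A-\lambda I)x = 0$, so $x$ is an L-eigenvector of $A$ lying in the interior of $\mathcal{K}_n$ and associated with $\lambda$; by definition, $\lambda$ is an interior L-eigenvalue of $A$.

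No real obstacle is anticipated. The only point requiring a moment's care is the strict-positivity estimate in the forward direction (a single application of Cauchy--Schwarz exploiting the strict inequality $\|\xi\| < x_n$); everything else is bookkeeping with the normalization $x_n = 1$ and unwinding the definitions. It may be worth isolating the strict-positivity fact as a separate lemma, since the same computation, together with its boundary analogue, is likely to be reused when characterizing the boundary L-eigenvalues.
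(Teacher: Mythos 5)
Your proposal is correct and follows essentially the same route as the paper: normalize the eigenvector to have last entry $1$ and then use the cone geometry to force $(A-\lambda I)x=0$. The only difference is that you prove the key fact explicitly (an interior point of $\mathcal{K}_n$ has strictly positive inner product with every nonzero element of $\mathcal{K}_n$, via Cauchy--Schwarz), whereas the paper simply invokes the equivalent statement that two nonzero orthogonal vectors in $\mathcal{K}_n$ must both lie on the boundary.
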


\begin{proof}
Recall that  $\lambda$ is an interior L-eigenvalue of $A$ if and only if there exists an L-eigenvector  $x=[\xi^T, 1]^T$ of $A$ associated with $\lambda$ in the interior of $\mathcal{K}^n$. This is equivalent to the conditions $\|\xi\|<1$,  $(A-\lambda I)x\in \mathcal{K}_n$, and $x^T (A-\lambda I)x=0$. Since $x$ and $(A-\lambda I)x$ must be orthogonal and any pair of nonzero orthogonal vectors in $\mathcal{K}_n$ must lie on the boundary, we have $(A-\lambda I)x =0$, which proves the result.
\end{proof}

Notice that as an immediate consequence of the previous theorem, we have that the interior L-eigenvalues of a matrix $A$ are also standard eigenvalues of $A$ and that the corresponding L-eigenvectors are also standard eigenvectors. 

\begin{theorem}\label{thm:boundary}
$\lambda\in \mathbb{R}$ is a boundary L-eigenvalue of $A \in M_n$ if and only if there exists $ \xi \in \mathbb{R}^{n-1}$ and $s \geq 0$ such that $\|\xi\| = 1$ and
$$
(A-\lambda I) \begin{bmatrix} \xi \\ 1\end{bmatrix} = s \begin{bmatrix} -\xi \\ 1\end{bmatrix}.
$$
\end{theorem}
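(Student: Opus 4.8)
The plan is to characterize when $\lambda$ is a boundary L-eigenvalue directly from the definition, using the normalization discussed just before Theorem~\ref{thm-interior}. Suppose first that $\lambda$ is a boundary L-eigenvalue of $A$. Then there is an L-eigenvector $x$ on the boundary of $\mathcal{K}_n$; after scaling by its (necessarily positive) last coordinate we may take $x=[\xi^T,1]^T$ with $\|\xi\|=1$. The complementarity condition $x^T(A-\lambda I)x=0$ together with $(A-\lambda I)x\in\mathcal{K}_n$ forces the vector $y:=(A-\lambda I)x$ to be a nonzero element of $\mathcal{K}_n$ orthogonal to $x$, or else $y=0$. In either case $y$ lies on the boundary of $\mathcal{K}_n$ (the zero vector being a trivial boundary point), so $y=[\eta^T,t]^T$ with $\|\eta\|=t\geq 0$. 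The key computation is then to use orthogonality $x^Ty=0$, i.e. $\xi^T\eta + t = 0$, combined with $\|\xi\|=1$ and $\|\eta\|=t$: by Cauchy--Schwarz, $|\xi^T\eta|\le\|\xi\|\,\|\eta\| = t$, and since $\xi^T\eta=-t$ equality holds, which forces $\eta$ to be a nonpositive multiple of $\xi$; writing $\eta=-s\xi$ with $s\ge 0$ gives $\|\eta\|=s=t$, hence $y = s[-\xi^T,1]^T$, which is exactly the claimed identity.

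For the converse, suppose $\xi\in\mathbb{R}^{n-1}$ with $\|\xi\|=1$ and $s\ge0$ satisfy $(A-\lambda I)[\xi^T,1]^T = s[-\xi^T,1]^T$. Set $x=[\xi^T,1]^T$; then $x\in\mathcal{K}_n$ and in fact $x$ is on the boundary since $\|\xi\|=1$. The vector $(A-\lambda I)x = s[-\xi^T,1]^T$ has last coordinate $s\ge 0$ and norm of its first block equal to $s\|\xi\| = s$, so it lies in $\mathcal{K}_n$ as well. Finally, $x^T(A-\lambda I)x = s(-\xi^T\xi + 1) = s(1-\|\xi\|^2) = 0$. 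Thus $x$ is an L-eigenvector of $A$ associated with $\lambda$ lying on the boundary of $\mathcal{K}_n$, so $\lambda\in\sigma_{bd}(A)$.

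I do not anticipate a serious obstacle here; the only point requiring a little care is the forward direction, specifically justifying that the complementary vector $y=(A-\lambda I)x$ must be a scalar multiple of the reflected vector $[-\xi^T,1]^T$ rather than some other boundary point orthogonal to $x$. This is precisely where the Cauchy--Schwarz equality case is used: orthogonality of two nonzero boundary vectors of the Lorentz cone pins down their first blocks to be antiparallel. One should also note the degenerate subcase $y=0$, which is recovered by taking $s=0$, so no separate argument is needed. The structure of the proof mirrors that of Theorem~\ref{thm-interior}, replacing the interior condition $(A-\lambda I)x=0$ with the boundary reflection relation.
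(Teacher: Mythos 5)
Your proof is correct and follows essentially the same route as the paper's: normalize the boundary eigenvector to $[\xi^T,1]^T$ and observe that orthogonality plus membership in $\mathcal{K}_n$ forces $(A-\lambda I)x$ to be a nonnegative multiple of $[-\xi^T,1]^T$. The only difference is that you spell out, via the equality case of Cauchy--Schwarz, the step the paper leaves implicit (and note the harmless reordering: $\|\eta\|=t$ is a consequence of that equality argument rather than something to assert beforehand).
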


\begin{proof}
Recall that  $\lambda$ is a boundary L-eigenvalue of $A$ if and only if there exists an L-eigenvector  $x=[\xi^T, 1]^T$ of $A$ associated with $\lambda$ on the boundary of $\mathcal{K}^n$. This is equivalent to the conditions $\|\xi\|=1$,  $(A-\lambda I)x\in \mathcal{K}_n$, and $x^T (A-\lambda I)x=0$. Since 
$(A-\lambda I)x$ must be orthogonal to $x$, $(A-\lambda I)x$ must be a nonnegative multiple of $[-\xi^T, 1]^T$, which proves the result.
\end{proof}

Next we give another characterization of the  boundary  L-eigenvalues of a matrix. We denote the Moore-Penrose inverse of a matrix $M$ by $M^{\dag}$.

\begin{theorem}\label{thm:char-Leig}\cite{seeger-torki}
Let 
\begin{equation}\label{matrixA}
    A=\left[\begin{array}{cc} \tilde{A} & u\\ v^T & a\end{array}\right], \quad \text{where} \quad \tilde{A}\in M_{n-1},\quad u,v\in \mathbb{R}^{n - 1}, \quad \text{and} \quad a\in \mathbb{R}.
\end{equation}
A real number $\lambda$ is in $\sigma_{bd}(A)$ if  one can write $\lambda= \mu+s$, with
$\mu, s\in \mathbb{R}$ and $s\geq 0$, solving (exactly) one of the following systems:

\medskip
{\bf System I}
\begin{enumerate}
\item[I.1] $\mu$ is not an eigenvalue of $\tilde{A}$;
\item[I.2] $v^T (\tilde{A} - \mu I_{n-1})^{-1} u = a - \mu - 2s$;
\item[I.3] $\|(\tilde{A} - \mu I_{n-1})^{-1} u\| = 1.$
\end{enumerate}

\medskip
{\bf System II}
\begin{itemize}
    \item [II.1] $\mu$ is an eigenvalue of $\tilde{A}$;
    \item[II.2] $u\in \Img(\tilde{A} -\mu I_{n-1})$;
    \item[II.3] $v\in \Img(\tilde{A}^T - \mu I_{n-1})$;
    \item[II.4]
    $v^T(\tilde{A}-\mu I_{n-1})^{\dag} u = a - \mu-2s$;
    \item[II.5]
    $$\left\| \left[\begin{array}{cc} \tilde{A}-\mu I_{n-1}\\v^T \end{array} \right]^{\dag}\left[ \begin{array}{c} u \\ a-\mu-2s\end{array} \right]\right \| \leq 1.$$
\end{itemize}

\medskip
{\bf System III}
\begin{itemize}
    \item [III.1] $\mu$ is an eigenvalue of $\tilde{A}$ with geometric multiplicity $1$;
    \item[III.2] $u\in \Img(\tilde{A} -\mu I_{n-1})$;
    \item[III.3] $v\notin \Img(\tilde{A}^T - \mu I_{n-1})$;
    \item[III.4]
    $$\left\| \left[\begin{array}{cc} \tilde{A}-\mu I_{n-1}\\v^T \end{array} \right]^{\dag}\left[ \begin{array}{c} u \\ a-\mu-2s\end{array} \right]\right \| = 1.$$
\end{itemize}

\medskip
{\bf System IV}
\begin{itemize}
    \item [IV.1] $\mu$ is an eigenvalue of $\tilde{A}$ with geometric multiplicity at least $2$;
    \item[IV.2] $u\in \Img(\tilde{A} -\mu I_{n-1})$;
    \item[IV.3] $v\notin \Img(\tilde{A}^T - \mu I_{n-1})$;
    \item[IV.4]
    $$\left\| \left[\begin{array}{cc} \tilde{A}-\mu I_{n-1}\\v^T \end{array} \right]^{\dag}\left[ \begin{array}{c} u \\ a-\mu-2s\end{array} \right]\right \| \leq 1.$$
\end{itemize}

\end{theorem}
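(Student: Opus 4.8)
The plan is to obtain the four systems directly from \cref{thm:boundary}. Writing $\lambda=\mu+s$ with $s\ge 0$ and substituting the L-eigenvector $[\xi^T,1]^T$ with $\|\xi\|=1$ into $(A-\lambda I)[\xi^T,1]^T=s[-\xi^T,1]^T$, I would expand the two block rows of $A$ as in \eqref{matrixA}. The top block gives $(\tilde A-(\mu+s)I_{n-1})\xi+u=-s\xi$, and the $-s\xi$ terms cancel, leaving the clean relation $(\tilde A-\mu I_{n-1})\xi=-u$. The bottom block gives $v^T\xi+a-\mu-s=s$, i.e.\ $v^T\xi=-(a-\mu-2s)$. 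Hence $\lambda\in\sigma_{bd}(A)$ if and only if there exist $\mu\in\mathbb{R}$ and $s\ge0$ with $\lambda=\mu+s$ for which the stacked linear system
\[
\begin{bmatrix}\tilde A-\mu I_{n-1}\\ v^T\end{bmatrix}\xi=-\begin{bmatrix}u\\ a-\mu-2s\end{bmatrix}
\]
admits a solution $\xi$ with $\|\xi\|=1$. Denote the coefficient matrix by $B$ and the right-hand side by $-w$; since $\|B^{\dag}(-w)\|=\|B^{\dag}w\|$, the norm quantities in \cref{thm:char-Leig} can be written without the minus sign.

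Next I would split according to whether $\mu\in\sigma(\tilde A)$; the four systems will correspond to a partition of the admissible pairs $(\mu,s)$. If $\mu\notin\sigma(\tilde A)$, then $\tilde A-\mu I_{n-1}$ is invertible and the top block forces $\xi=-(\tilde A-\mu I_{n-1})^{-1}u$; inserting this in the bottom block yields I.2, while $\|\xi\|=1$ is I.3 --- this is System I. If $\mu\in\sigma(\tilde A)$ with geometric multiplicity $g\ge1$, the top block is solvable exactly when $u\in\Img(\tilde A-\mu I_{n-1})$ (conditions II.2/III.2/IV.2), and I would then split further on whether $v\in\Img(\tilde A^T-\mu I_{n-1})=(\ker(\tilde A-\mu I_{n-1}))^{\perp}$. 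If $v$ lies in this subspace, then $v^T$ annihilates $\ker(\tilde A-\mu I_{n-1})$, so the bottom block is compatible with the top one precisely when $v^T(\tilde A-\mu I_{n-1})^{\dag}u=a-\mu-2s$, which is II.4; if $v$ does not lie in it, the functional $v^T$ is nonzero on $\ker(\tilde A-\mu I_{n-1})$, so the bottom block can always be met by adjusting $\xi$ inside that kernel, and no further compatibility condition arises --- which is why Systems III and IV have no analogue of II.4.

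It then remains to read off the norm condition in each branch using standard facts about the Moore--Penrose inverse: for a consistent system $B\xi=c$ the minimum-norm solution is $B^{\dag}c$, it is orthogonal to $\ker B$, and the full solution set $B^{\dag}c+\ker B$ therefore has norms ranging over $[\|B^{\dag}c\|,\infty)$ if $\ker B\ne\{0\}$ and over the single value $\|B^{\dag}c\|$ if $\ker B=\{0\}$. A rank count shows $\dim\ker B=g$ when $v\in\Img(\tilde A^T-\mu I_{n-1})$ and $\dim\ker B=g-1$ otherwise. Consequently: in the first case a unit-norm solution exists iff $\|B^{\dag}w\|\le1$ (System II, for every $g$); in the second case with $g=1$ one has $\ker B=\{0\}$, the solution is unique and equals $-B^{\dag}w$, so a unit-norm solution exists iff $\|B^{\dag}w\|=1$ (System III); and in the second case with $g\ge2$ one has $\ker B\ne\{0\}$, so again $\|B^{\dag}w\|\le1$ suffices (System IV). Assembling these equivalences over all decompositions $\lambda=\mu+s$ yields \cref{thm:char-Leig}.

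The step I expect to require the most care is the bookkeeping that assigns each enumerated condition its role --- separating the conditions that select the case and guarantee solvability of the stacked system (everything except I.3, II.5, III.4, IV.4) from the single norm condition producing $\|\xi\|=1$ --- and in particular explaining why System III alone carries an equality while Systems II and IV carry an inequality. This distinction rests entirely on whether $\ker B$ is trivial, hence on the interplay between the geometric multiplicity $g$ and the position of $v$ relative to $\Img(\tilde A^T-\mu I_{n-1})$, so the dimension count for $\ker B$ is the crux. A minor point to dispatch cleanly is that the constraint $s\ge0$ is inert throughout: it merely travels along in the decomposition $\lambda=\mu+s$ and inside the scalar $a-\mu-2s$, so no case analysis is needed for it.
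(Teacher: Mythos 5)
The paper offers no proof of this theorem: it is imported verbatim from the Seeger--Torki reference, so there is no internal argument to compare against. Your derivation is correct and self-contained, and it is in the spirit of the original source: reducing \cref{thm:boundary} to the solvability, with a unit-norm solution $\xi$, of the stacked system $\left[\begin{smallmatrix}\tilde A-\mu I_{n-1}\\ v^T\end{smallmatrix}\right]\xi=-\left[\begin{smallmatrix}u\\ a-\mu-2s\end{smallmatrix}\right]$ is exactly right, and your case analysis matches the four systems. The two facts carrying the argument are both sound and correctly deployed: (i) $\Img(\tilde A^T-\mu I_{n-1})=(\ker(\tilde A-\mu I_{n-1}))^{\perp}$, which is what makes the bottom row either impose the compatibility condition II.4 (when $v$ lies in that image, so $v^T$ is constant on the solution set of the top block) or impose nothing (when it does not, since $v^T$ is then surjective on the kernel); and (ii) for a consistent system the minimum-norm solution $B^{\dag}c$ lies in $(\ker B)^{\perp}$, so by Pythagoras the solution norms fill $[\|B^{\dag}c\|,\infty)$ when $\ker B\neq\{0\}$ and reduce to the single value $\|B^{\dag}c\|$ when $\ker B=\{0\}$; together with your count $\dim\ker B=g$ or $g-1$ this yields exactly the $\leq 1$ versus $=1$ dichotomy distinguishing Systems II and IV from Systems I and III. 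Note also that you prove an equivalence, which is more than the stated one-directional claim, and that is harmless. If you write this up, the only points worth making explicit are the Pythagorean step behind the norm-range claim and the observation that $\|B^{\dag}(-w)\|=\|B^{\dag}w\|$, both of which you already flag.
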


A distinguishing property of the L-spectrum compared to the standard spectrum of a matrix is that, while an $n\times n$ real matrix cannot have more than $n$ standard eigenvalues, it may have infinitely many L-eigenvalues. The next theorem characterizes the matrices with this property.

\begin{theorem}\label{thm:infinite}\cite{seeger-torki}
Let $n\geq 3$ and let $A\in M_n$ be partitioned as in \eqref{matrixA}. The  matrix $A$  has infinitely many L-eigenvalues if and only if System IV in \cref{thm:char-Leig} is satisfied for a real eigenvalue $\mu$ of $\tilde A$ and for all $s$ in an interval $[s_1, s_2]$, where $0 \leq s_1 < s_2$.
\end{theorem}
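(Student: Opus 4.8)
The plan is to combine three ingredients: the characterization of $\sigma_{bd}(A)$ via Systems I--IV in \cref{thm:char-Leig}, the fact (\cref{thm-interior}) that every interior L-eigenvalue is a standard eigenvalue of $A$, and a pigeonhole argument across the four systems. The ``if'' direction is short: suppose System IV is satisfied for a real eigenvalue $\mu$ of $\tilde A$ and for every $s\in[s_1,s_2]$ with $0\le s_1<s_2$. Conditions IV.1--IV.3 are incompatible with the opening conditions of the other systems (System I requires $\mu\notin\sigma(\tilde A)$; System III requires geometric multiplicity exactly $1$; System II requires $v\in\Img(\tilde A^T-\mu I_{n-1})$, contradicting IV.3), so for each such $s$ the pair $(\mu,s)$ solves exactly System IV, and \cref{thm:char-Leig} gives $\mu+s\in\sigma_{bd}(A)\subseteq\sigma_L(A)$. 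Since $s\mapsto\mu+s$ is injective, $A$ has infinitely many L-eigenvalues.

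For the ``only if'' direction, assume $\sigma_L(A)$ is infinite. By \cref{thm-interior}, $\sigma_{int}(A)$ consists of standard eigenvalues of $A$, so it is finite; hence $\sigma_{bd}(A)$ is infinite. By the characterization in \cref{thm:char-Leig}, every $\lambda\in\sigma_{bd}(A)$ can be written $\lambda=\mu+s$ with $s\ge0$ solving exactly one of Systems I--IV, so one of the four systems produces infinitely many distinct values of $\lambda$. We would then rule out Systems I, II and III. For System I there are only finitely many admissible $\mu$: by I.3, $\mu$ solves $\|(\tilde A-\mu I_{n-1})^{-1}u\|^2=1$, whose left-hand side is a rational function of $\mu$ on $\mathbb R\setminus\sigma(\tilde A)$ that tends to $0$ as $|\mu|\to\infty$ and hence is not identically $1$; clearing denominators yields a nonzero polynomial equation, so finitely many roots, and I.2 then determines $s$ uniquely from $\mu$. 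For System II, $\mu$ ranges over the finite set $\sigma(\tilde A)$ and II.4 again determines $s$ from $\mu$. So Systems I and II contribute only finitely many $\lambda$.

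The substance is Systems III and IV, where $s$ is not uniquely determined. For a fixed admissible $\mu$, write $B_\mu=\bmat{\tilde A-\mu I_{n-1}\\ v^T}$ and $w(s)=\bmat{u\\ a-\mu-2s}=w_0-2s\,e_n$ with $w_0=\bmat{u\\ a-\mu}$; then $B_\mu^{\dag}w(s)=B_\mu^{\dag}w_0-2s\,B_\mu^{\dag}e_n$, so the quantity in III.4 / IV.4 equals $\|B_\mu^{\dag}w_0-2s\,B_\mu^{\dag}e_n\|^2$, a polynomial of degree at most $2$ in $s$ with leading coefficient $4\|B_\mu^{\dag}e_n\|^2$. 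This coefficient is strictly positive: $B_\mu^{\dag}e_n=0$ would force $e_n\perp\Img(B_\mu)$, equivalently the last row $v^T$ of $B_\mu$ to vanish, but III.3 (resp.\ IV.3) forces $v\ne0$ since $0\in\Img(\tilde A^T-\mu I_{n-1})$. Hence in System III the equation III.4 has at most two solutions $s$ for each of the finitely many admissible $\mu$, so System III also contributes only finitely many $\lambda$; therefore the infinite family comes from System IV. Since System IV also has only finitely many admissible $\mu$, infinitely many of its $\lambda$'s share a common $\mu$, i.e.\ infinitely many $s\ge0$ satisfy IV.4 for that $\mu$; as the left-hand side of IV.4 is an upward parabola $q_\mu(s)$, the set $\{s:q_\mu(s)\le1\}$ is a bounded closed interval, and intersecting with $[0,\infty)$ and using that the result is infinite shows it equals $[s_1,s_2]$ for some $0\le s_1<s_2$, on which System IV holds.

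We expect the main obstacle to be exactly Systems III and IV: one must recognize that the Moore--Penrose expression there is a genuine, non-constant quadratic in $s$ (which hinges on the observation $v\ne0$) and, for System I, exclude the degenerate possibility that the norm condition holds on an interval of $\mu$. Everything else is pigeonhole bookkeeping on top of \cref{thm:char-Leig}.
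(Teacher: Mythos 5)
The paper does not actually prove this statement -- it is imported verbatim from Seeger--Torki \cite{seeger-torki} as a citation -- so there is no internal proof to compare against; what you have written is a self-contained reconstruction from \cref{thm:char-Leig}, and it is essentially sound. The ``if'' direction correctly checks that IV.1--IV.3 exclude Systems I--III, and the ``only if'' direction's key points all hold: $\sigma_{int}(A)$ is finite by \cref{thm-interior}, each system admits only finitely many $\mu$ (for System I because $\mu\mapsto\|(\tilde A-\mu I_{n-1})^{-1}u\|^2$ is a rational function vanishing at infinity, hence not identically $1$), Systems I and II pin down $s$ through I.2/II.4, and the quantity in III.4/IV.4 is $\|B_\mu^{\dag}w_0-2sB_\mu^{\dag}e_n\|^2$, an upward parabola in $s$ because $B_\mu^{\dag}e_n=0$ would force the last row $v^T$ of $B_\mu$ to vanish while III.3/IV.3 give $v\neq 0$ (since $0\in\Img(\tilde A^T-\mu I_{n-1})$); the sublevel set $\{s\ge 0: q_\mu(s)\le 1\}$ is then a closed interval, which yields $[s_1,s_2]$ with $s_1<s_2$ once it is infinite. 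The one caveat you should flag explicitly is that your ``only if'' direction invokes the converse of \cref{thm:char-Leig} -- that \emph{every} boundary L-eigenvalue can be written as $\mu+s$ solving exactly one of the four systems -- whereas the paper states that theorem only as a sufficient condition (``$\lambda\in\sigma_{bd}(A)$ if \dots''); the full equivalence is what Seeger--Torki prove, so your argument is correct relative to that stronger, true version, but as written it uses more than the paper's quoted statement provides.
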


\subsection[Matrices in M3 with infinitely many L-eigenvalues]{Matrices in $M_3$ with infinitely many L-eigenvalues} \label{infinite-eigenval}
Here we characterize the matrices in $M_3$ with infinitely many L-eigenvalues. First we give a technical result, which is used in the proof of \cref{3-inf}. We use the notation $\tr(A)$ for the trace of a matrix $A$.
\begin{lemma}\label{pseudo-rank1}
Let $A$ be a real matrix of rank 1. Then
$$A^{\dag} = \frac{1}{\tr(A^TA)} A^T.$$
\end{lemma}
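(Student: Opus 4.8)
The plan is to prove the rank-one formula $A^{\dagger} = \frac{1}{\tr(A^T A)} A^T$ directly from the defining Penrose conditions. Since $A$ has rank $1$, I can write $A = xy^T$ for nonzero column vectors $x, y$. Then $A^T A = y x^T x y^T = \|x\|^2\, y y^T$, so $\tr(A^T A) = \|x\|^2 \|y\|^2$, which is nonzero; thus the candidate matrix $B := \frac{1}{\|x\|^2\|y\|^2} y x^T$ is well defined. The bulk of the proof is then to verify the four Moore–Penrose axioms for $B$.

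First I would compute $AB = (xy^T)\bigl(\tfrac{1}{\|x\|^2\|y\|^2} y x^T\bigr) = \tfrac{\|y\|^2}{\|x\|^2\|y\|^2} x x^T = \tfrac{1}{\|x\|^2} x x^T$, and similarly $BA = \tfrac{1}{\|y\|^2} y y^T$. Both are symmetric, so the conditions $(AB)^T = AB$ and $(BA)^T = BA$ hold immediately. Next, $ABA = \tfrac{1}{\|x\|^2} x x^T \cdot x y^T = x y^T = A$, and $BAB = \tfrac{1}{\|y\|^2} y y^T \cdot \tfrac{1}{\|x\|^2\|y\|^2} y x^T = \tfrac{1}{\|x\|^2\|y\|^2} y x^T = B$. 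This verifies all four Penrose conditions, and since the Moore–Penrose inverse is unique, $A^{\dagger} = B = \frac{1}{\|x\|^2\|y\|^2} y x^T$. Finally, $A^T = y x^T$ and $\tr(A^T A) = \|x\|^2\|y\|^2$, so $B = \frac{1}{\tr(A^T A)} A^T$, as claimed.

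There is really no serious obstacle here; the only point requiring a small argument is the existence of the factorization $A = x y^T$ with $x, y \neq 0$, which is just the statement that a rank-one matrix is an outer product of its (nonzero) column space and row space generators. Everything else is a short direct computation. One could alternatively avoid the factorization and argue via the SVD $A = \sigma_1 u_1 v_1^T$ (the singular value decomposition of a rank-one matrix), in which case $A^{\dagger} = \sigma_1^{-1} v_1 u_1^T$ and $\tr(A^T A) = \sigma_1^2$, giving $\frac{1}{\tr(A^T A)} A^T = \sigma_1^{-2}\, \sigma_1 v_1 u_1^T = \sigma_1^{-1} v_1 u_1^T = A^{\dagger}$; but the outer-product computation is the cleaner route to write out.
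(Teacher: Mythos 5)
Your proof is correct, but your main route differs from the paper's. You verify the four Penrose conditions directly for the candidate $B = \frac{1}{\|x\|^2\|y\|^2}\, y x^T$ built from a factorization $A = xy^T$, and then invoke uniqueness of the Moore--Penrose inverse; the paper instead passes through the singular value decomposition, first noting that $A^TA = \|x\|^2 yy^T$ is symmetric of rank one with sole nonzero eigenvalue $\tr(A^TA)$, so the only nonzero singular value of $A$ is $\sqrt{\tr(A^TA)}$, and then computing $A^{\dag} = V\Sigma^{\dag}U^T = \frac{1}{\tr(A^TA)}V\Sigma^T U^T = \frac{1}{\tr(A^TA)}A^T$. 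Your approach is more elementary and self-contained: it needs only the defining Penrose identities and their uniqueness, and all four verifications ($AB = \frac{1}{\|x\|^2}xx^T$, $BA = \frac{1}{\|y\|^2}yy^T$ symmetric, $ABA = A$, $BAB = B$) check out. The paper's approach trades that hands-on verification for the standard formula $A^{\dag} = V\Sigma^{\dag}U^T$, which makes the identity fall out of one line of algebra once the singular value is identified. The alternative you sketch at the end via $A = \sigma_1 u_1 v_1^T$ is essentially the paper's argument in compressed form. Either way the outer-product factorization of a rank-one matrix is the only preliminary fact needed, and you handle it correctly.
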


\begin{proof}
Since $A$ has rank 1, $A = uv^T$ for some nonzero vectors $u$ and $v$. Hence
$$A^TA= vu^T u v^T = \|u\|^2 v v^T$$
also has rank 1.  Thus, since $A^TA$ is symmetric, it is diagonalizable and has exactly one nonzero eigenvalue, namely,  $\tr(A^TA)$.  Hence there exists a nonsingular matrix $P$ such that 
$$A^TA = P \left[ \begin{array}{cc} \tr(A^TA) & 0 \\ 0 & 0 \end{array} \right ] P^{-1}.$$
Let $A = U \Sigma V^T$ be a singular value decomposition of $A$.  Then $$\Sigma = \left[ \begin{array}{cc} \sqrt{\tr(A^TA)} & 0 \\ 0 & 0 \end{array} \right]$$ and
\begin{align*}
    A^{\dag}& = V \Sigma^{\dag} U^T = V  \left[ \begin{array}{cc}  \frac{1}{\sqrt{\tr(A^TA)}} & 0 \\ 0 & 0 \end{array} \right] U^T= \frac{1}{\tr(A^TA)} V \Sigma^T U^T = \frac{1}{\tr(A^TA)} A^T.
    \end{align*}
\end{proof}

The following theorem is the main result in this section.
\begin{theorem}\label{3-inf}
Let $A\in M_3$. Then $A$ has infinitely many L-eigenvalues if and only if
$$A = \left[ \begin{array}{cc} c I_2 & 0 \\ v^T & a \end{array} \right], \quad \text{where}\quad  v \neq 0, \quad a, c\in \mathbb{R},\quad \text{and} \quad c < a +\|v\|.$$
Moreover, 
\begin{equation}\label{L-interval}
\left [\max \left\{c, \frac{a+c -\|v\|}{2} \right\}, \frac{a+c+\|v\|}{2}\right]\subseteq \sigma_{bd}(A).
\end{equation}
\end{theorem}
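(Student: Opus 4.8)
The plan is to derive everything from \cref{thm:infinite}, which reduces "infinitely many L-eigenvalues'' to the solvability of System IV of \cref{thm:char-Leig} over a nondegenerate interval of the parameter $s$. Write $A$ as in \eqref{matrixA} with $n=3$, so $\tilde A\in M_2$, $u,v\in\R^2$, $a\in\R$. First I would observe that a $2\times 2$ matrix has an eigenvalue of geometric multiplicity at least $2$ if and only if it is a scalar matrix: if $\dim\ker(\tilde A-\mu I_2)\geq 2$ then $\ker(\tilde A-\mu I_2)=\R^2$, so $\tilde A=\mu I_2$. Hence condition~IV.1 forces $\tilde A=cI_2$ with $c:=\mu$. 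Since then $\tilde A-\mu I_2=0$, condition~IV.2 ($u\in\Img(\tilde A-\mu I_2)=\{0\}$) forces $u=0$, and condition~IV.3 ($v\notin\Img(\tilde A^T-\mu I_2)=\{0\}$) forces $v\neq 0$. This already shows that any matrix with infinitely many L-eigenvalues has the block form in the statement; conversely, a matrix of that form has $\tilde A=cI_2$, $u=0$, $v\neq 0$, so IV.1--IV.3 hold automatically, and only IV.4 remains to be analyzed.

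For IV.4, the relevant matrix $M:=\begin{bmatrix}\tilde A-\mu I_2\\ v^T\end{bmatrix}=\begin{bmatrix}0_{2\times 2}\\ v^T\end{bmatrix}$ has rank $1$ since $v\neq 0$, so \cref{pseudo-rank1} applies: $\tr(M^TM)=\tr(vv^T)=\|v\|^2$ and $M^{\dag}=\tfrac{1}{\|v\|^2}M^T=\tfrac{1}{\|v\|^2}\begin{bmatrix}0_{2\times 2}& v\end{bmatrix}$. Using $u=0$ and $\mu=c$, I compute $M^{\dag}\begin{bmatrix}u\\ a-\mu-2s\end{bmatrix}=\tfrac{a-c-2s}{\|v\|^2}\,v$, whose norm is $\tfrac{|a-c-2s|}{\|v\|}$. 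Thus IV.4 is equivalent to $|a-c-2s|\leq\|v\|$, i.e.\ to $s\in\bigl[\tfrac{a-c-\|v\|}{2},\,\tfrac{a-c+\|v\|}{2}\bigr]$. Intersecting with the constraint $s\geq 0$ from \cref{thm:char-Leig}, the admissible set of $s$ is exactly $J:=\bigl[\max\{0,\tfrac{a-c-\|v\|}{2}\},\,\tfrac{a-c+\|v\|}{2}\bigr]$. Since $\|v\|>0$ we have $\tfrac{a-c-\|v\|}{2}<\tfrac{a-c+\|v\|}{2}$, so $J$ is a nondegenerate interval if and only if $\tfrac{a-c+\|v\|}{2}>0$, i.e.\ $c<a+\|v\|$. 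Combining this with the previous paragraph and \cref{thm:infinite} gives the stated equivalence in both directions.

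For the "moreover'' statement, observe that for every $s\in J$ all of IV.1--IV.4 hold, so by \cref{thm:char-Leig} the number $\lambda=\mu+s=c+s$ lies in $\sigma_{bd}(A)$; letting $s$ range over $J$ and translating by $c$ yields $c+J=\bigl[\max\{c,\tfrac{a+c-\|v\|}{2}\},\,\tfrac{a+c+\|v\|}{2}\bigr]\subseteq\sigma_{bd}(A)$, which is \eqref{L-interval}. I expect no substantive analytic obstacle once \cref{thm:infinite} and \cref{pseudo-rank1} are available; the only points requiring care are purely formal: verifying that the geometric-multiplicity hypothesis in IV.1 pins $\tilde A$ down to a scalar matrix in the $2\times 2$ case, correctly identifying $M^\dag$ for the rank-one matrix $M$, and tracking the $\max\{0,\cdot\}$ cleanly through the translation $s\mapsto c+s$.
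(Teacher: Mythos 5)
Your proposal is correct and follows essentially the same route as the paper: reduce to System IV via \cref{thm:infinite}, use IV.1--IV.3 to force $\tilde A=cI_2$, $u=0$, $v\neq 0$, apply \cref{pseudo-rank1} to turn IV.4 into $|a-c-2s|\leq\|v\|$, and intersect with $s\geq 0$ to get the interval and the condition $c<a+\|v\|$. The only difference is cosmetic: you spell out the converse and the nondegeneracy of the $s$-interval explicitly, whereas the paper dispatches the converse with a one-line appeal to \cref{thm:infinite}.
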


\begin{proof}
Assume that $A$ is a $3\times 3$ real matrix with infinitely many $L$-eigenvalues. Let us partition $A$ as
$$A=\left[ \begin{array}{cc} \tilde{A} & u \\ v^T & a \end{array} \right], \quad \text{where} \quad \tilde{A}\in M_2,\quad u,v\in \mathbb{R}^2, \quad \text{and} \quad a\in \mathbb{R}.$$
By \cref{thm:infinite}, 
 $A$ must have a boundary L-eigenvalue $\lambda$ satisfying conditions IV.1–IV.4 in \cref{thm:char-Leig}.
Thus, by \cref{thm:boundary},  $\lambda= \mu+s$ with $s\geq 0$, and there is a solution to the following system of equations:
$$\left[ \begin{array}{cc} \tilde{A} - \mu I_2 & u \\ v^T & a - \mu - 2s \end{array} \right]\left[ \begin{array}{c} \xi \\ 1 \end{array} \right] =0, \quad \|\xi\|=1.$$
By condition IV.1, since $\mu$ is an eigenvalue of $\tilde{A}$ of geometric multiplicity 2,
we have $\rank (\tilde{A} - \mu I_2) = 0$. Thus $\tilde{A}= \mu I_2$. Moreover, by condition IV.2, $u\in \Img(\tilde{A}- \mu I_2)$, which means $u=0$. By condition IV.3, we deduce that $v\neq 0$.  Finally, by condition IV.4, we have that
\begin{equation}\label{eq1}
\left \| \left[ \begin{array}{cc} 0 \\ v^T \end{array} \right]^{\dag} \left [ \begin{array}{c} 0 \\ a- \mu - 2s \end{array} \right] \right\| \leq 1.
\end{equation}
By \cref{pseudo-rank1},
$$ \left[ \begin{array}{cc} 0 \\ v^T \end{array} \right]^{\dag} = \frac{1}{\|v\|^2} [0, v].$$
Hence, \eqref{eq1} reduces to
$$\frac{|a - \mu -2s|}{\|v\|} = \frac{1}{\|v\|^2}\left \|[0, v] \left[ \begin{array}{c} 0 \\ a- \mu - 2s \end{array} \right] \right\| \leq 1,$$
or equivalently,
\begin{equation}
\frac{a - \|v\| - \mu}{2} \leq s \leq \frac{a + \|v\|  - \mu}{2}.
\end{equation}
Since there are infinitely many L-eigenvalues and $s\geq 0$, we deduce $a + \|v\| - \mu >0$. Then by taking $c = \mu$, we have an interval of L-eigenvalues $\lambda = \mu + s$ given by \eqref{L-interval}.

The converse holds by \cref{thm:infinite}. 
\end{proof}

\section{Main results}
\label{sec:main-thm}
We now state the two main results of the paper. The first result, together with \cref{QQ^TLpres}, provides a  full characterization of the linear maps $\phi:M_3 \rightarrow M_3$ that preserve the Lorentz spectrum. 
\begin{theorem} \label{main-thm}
Let $\phi:M_3 \rightarrow M_3$ be a linear preserver of the Lorentz spectrum. Then there exists an orthogonal matrix $Q \in M_2$ such that
$$
\phi(A) = \begin{bmatrix} Q & 0 \\ 0 & 1\end{bmatrix} A \begin{bmatrix} Q^T & 0 \\ 0 & 1\end{bmatrix} \quad \textrm{for all $ A \in M_3$}.
$$
We call $Q $ the \emph{orthogonal matrix associated with $\phi$}. 
\end{theorem}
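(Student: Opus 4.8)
The plan is to bootstrap from the two partial results already available. By Theorem~\ref{thm:bijective}, $\phi$ is bijective and $\phi(I_3)=I_3$; by Theorem~\ref{QQ^TLpres}, it suffices to show that $\phi$ has the \emph{form} $\phi(A)=PAR$ for some fixed matrices $P,R\in M_3$, since then the conclusion (including orthogonality of $Q$ and the block structure) follows automatically. So the entire problem reduces to: prove that a linear preserver of the L-spectrum on $M_3$ must be a multiplication map $A\mapsto PAR$.

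The engine for this is the characterization of matrices with infinitely many L-eigenvalues (Theorem~\ref{3-inf}). Let $\mathcal{I}\subseteq M_3$ be the set of matrices with infinitely many L-eigenvalues; by Theorem~\ref{3-inf} these are exactly the matrices $\begin{bmatrix} cI_2 & 0 \\ v^T & a\end{bmatrix}$ with $v\neq 0$ and $c<a+\|v\|$. Since $\phi$ preserves the L-spectrum, it preserves cardinality of the L-spectrum, hence $\phi(\mathcal{I})=\mathcal{I}$ (using bijectivity of $\phi$ for the reverse inclusion). The first main step is to understand $\mathcal{I}$ as a subset of the vector space $M_3$: it is (the nonempty relatively open part of) an affine-algebraic set, and its linear span, together with the linear constraints cutting it out, should pin down $\phi$ substantially. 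Concretely, I would look at how $\phi$ acts on the linear subspace $\mathcal{V}=\{\,[cI_2\ 0;\ v^T\ a]\,\}=\mathrm{span}(\mathcal{I})$ and on various limiting/degenerate members (e.g.\ letting $c$ vary, $v\to 0$, comparing matrices that differ by rank-one perturbations in the last row), extracting linear relations that $\phi$ must respect. The closure of $\mathcal{I}$ includes matrices with $u=0$, $\tilde A=cI_2$ scalar, so $\phi$ must send "scalar-plus-last-row" matrices to matrices of the same shape; combined with $\phi(I_3)=I_3$ this should force $\phi$ to fix the scalar part and act in a controlled (affine) way on the last row and on $a$.

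Once $\phi$ is understood on $\mathcal{V}$, the second main step is to propagate this to all of $M_3$. The idea is that the constraints "$u=0$" and "$\tilde A$ scalar" are each linear, and $\phi$ respects the family of affine subspaces obtained by translating $\mathcal V$ by elements on which we already have control (e.g.\ using that $A$ and $A+\text{(rank-one correction)}$ have related L-spectra, or intersecting translates of $\mathcal I$). By pulling together enough such affine slices whose union spans $M_3$, one shows the matrix entries of $\phi(A)$ depend linearly on those of $A$ in a way consistent with a two-sided multiplication $A\mapsto PAR$. At that point Theorem~\ref{QQ^TLpres} finishes the argument, yielding $P=Q\oplus[1]$, $R=Q^T\oplus[1]$ with $Q$ orthogonal. (A cleaner route to the multiplication form, if available, is to first show $\phi$ maps rank-one matrices to rank-one matrices — L-spectrum or its limiting behavior should detect rank — and then invoke a standard rank-one preserver theorem; I would try this in parallel.)

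The main obstacle I anticipate is the second step: extracting the full two-sided-multiplication form of $\phi$ from the single family $\mathcal{I}$. The set $\mathcal{I}$ lives in a $4$-dimensional affine picture inside $9$-dimensional $M_3$, so it does not span $M_3$, and one must genuinely exploit the L-spectrum on matrices \emph{outside} $\mathcal I$ — e.g.\ finitely-many-eigenvalue matrices whose L-spectra are computed via Systems I–IV of Theorem~\ref{thm:char-Leig} — to constrain $\phi$ on the remaining directions. Managing the case analysis coming from the four systems in Theorem~\ref{thm:char-Leig}, and in particular controlling how $\phi$ interacts with the eigenvalue/geometric-multiplicity conditions (IV.1 vs.\ III.1, etc.), is where the argument will get cumbersome; this is presumably why the authors defer it to the two later sections.
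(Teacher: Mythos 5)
Your opening reduction is legitimate: by the converse half of \cref{QQ^TLpres} it would indeed suffice to show $\phi(A)=PAR$ for fixed $P,R$, and using \cref{3-inf} plus bijectivity to get $\phi(\mathcal{I})=\mathcal{I}$ for the set $\mathcal{I}$ of matrices with infinitely many L-eigenvalues (hence invariance of its span) is exactly how the paper begins. But from that point on what you have is a plan, not a proof, and the decisive steps are missing. Even on the span of $\mathcal{I}$, spectrum equality does not by itself ``force $\phi$ to fix the scalar part and act in a controlled way on the last row'': the paper's \cref{inf-phi} shows that matching $\sigma_L$ leaves open a second family of images, namely \eqref{image2}, in which the $(3,3)$ entry and the norm of the last row both change, and ruling it out needs a further argument with matrices in $\mathcal{C}_2$ together with \cref{Enn} (see the proof of \cref{thm:main2}). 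More seriously, the span of $\mathcal{I}$ is only $4$-dimensional, so the real content of the theorem is the remaining five dimensions, i.e.\ the subspaces $\mathcal{S}_2$ and $\mathcal{S}_3$ of \eqref{3subspaces}. ``Pulling together enough affine slices whose union spans $M_3$'' is precisely the hard part: in the paper it occupies all of \cref{sec:proof2} and requires asymptotic arguments in a large parameter (\cref{technical2}, \cref{lem:0u-first}, \cref{[A 0; 0 0] partial result}), an appeal to the known classification of preservers on the subspace $\{\tilde A\oplus[a]\}$ (Theorem 4.2 of \cite{maribel1}, via \cref{DAD or DA^TD}), and separate arguments to eliminate the transpose and $\diag(1,-1)$ alternatives and to show the images of $\mathcal{S}_3$ carry no spurious $2\times 2$ block (\cref{thm:main3} and \cref{lm:nB}--\cref{thm:main4}). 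Nothing in your sketch supplies these steps or an alternative mechanism for them.

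The fallback route you float --- show that $\phi$ preserves rank-one matrices and invoke a rank-one preserver theorem --- rests on a false premise, since the L-spectrum does not detect rank: the zero matrix, $E_{11}$, and $\diag(1,1,0)$ all have Lorentz spectrum $\{0\}$ (the last by \cref{diagonalca}), so matrices of ranks $0$, $1$, and $2$ can share the same L-spectrum and equality of spectra gives no direct handle on rank. One would have to extract rank information from the spectra of pencils $A+tB$ as $B$ varies, which is an unproved and nontrivial claim. In summary, you have identified the correct starting point (the structure of matrices with infinite L-spectrum) and a valid reduction, but the determination of $\phi$ on $\mathcal{S}_2$ and $\mathcal{S}_3$ --- the bulk of the paper's argument --- is absent, so the proposal does not yet constitute a proof of \cref{main-thm}.
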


The strategy we employ to prove \cref{main-thm} is to  use  the linearity of the linear preservers applied to a decomposition of  $M_3$ as a direct sum of three subspaces. More explicitly, we decompose $M_3$ as follows:
\begin{equation}\label{3subspaces}
M_3 = \left\{ \begin{bmatrix} 0 & 0 \\ v^T & a \end{bmatrix} \right\} \oplus \left\{ \begin{bmatrix} \tilde{A} & 0 \\ 0 & 0 \end{bmatrix}\right\} \oplus \left\{ \begin{bmatrix} 0 & u \\ 0 & 0\end{bmatrix}\right\}=: \mathcal{S}_1 \oplus \mathcal{S}_2 \oplus \mathcal{S}_3,
\end{equation}
where $\tilde{A} \in M_2,$ $u,v \in \mathbb{R}^2,$ and $a \in \mathbb{R}.$ The image of an arbitrary matrix $A \in M_3$ under $\phi$ is the sum of the images of the projections of $A$ onto each of these  subspaces. 

The proof of \cref{main-thm} is a direct consequence of  \cref{thm:main2}, \cref{thm:main3}, and  \cref{thm:main4}, which give the images of the matrices in $\mathcal{S}_1$, $\mathcal{S}_2$ and $\mathcal{S}_3$, respectively, under a linear preserver of the L-spectrum.

The second main result is presented next and shows that any linear preserver of the L-spectrum on $M_3$ must preserve the nature of the L-eiegenvalues of a matrix. 

\begin{theorem}
Let $\phi:M_3\rightarrow M_3$ be a linear preserver of the L-spectrum. Then for all $A\in M_3$, 
$$\sigma_{int}(A)=\sigma_{int}(\phi(A)) \quad \textrm{and} \quad \sigma_{bd}(A)= \sigma_{bd}(\phi(A)).$$
\end{theorem}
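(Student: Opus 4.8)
The plan is to leverage \cref{main-thm}, which by this point is already established: any linear preserver $\phi$ of the L-spectrum on $M_3$ has the form $\phi(A) = (Q \oplus [1]) A (Q^T \oplus [1])$ for some orthogonal $Q \in M_2$. Thus it suffices to show that this particular kind of map preserves the interior and boundary L-spectra separately. First I would fix $R := Q \oplus [1] \in M_3$, which is an orthogonal matrix fixing $e_3$ and mapping $\mathcal{K}_3$ onto itself (since $R$ is an isometry preserving the $z$-coordinate, it preserves the condition $\|\xi\| \le \eta$, and likewise preserves the interior $\|\xi\| < \eta$ and boundary $\|\xi\| = \eta$).

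Next I would run the substitution directly in the complementarity definition. Suppose $\lambda \in \sigma_{int}(A)$. By \cref{thm-interior} there is $\xi$ with $\|\xi\| < 1$ and $(A - \lambda I)[\xi^T, 1]^T = 0$. Set $x := [\xi^T,1]^T$ and $y := R x$. Then $y = [(Q\xi)^T, 1]^T$ with $\|Q\xi\| = \|\xi\| < 1$, so $y$ is again of the normalized form with interior data. Compute $(\phi(A) - \lambda I) y = (R A R^T - \lambda R R^T) R x = R (A - \lambda I) R^T R x = R (A - \lambda I) x = 0$, using $R^T R = I$ and $R I R^T = I$. Hence $\lambda \in \sigma_{int}(\phi(A))$ by \cref{thm-interior}. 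The reverse inclusion follows by applying the same argument to $\phi^{-1}(A) = R^T A R$ (valid since $\phi$ is bijective by \cref{thm:bijective} and $R^T$ is again orthogonal with the same structure), or simply by symmetry of the construction. This gives $\sigma_{int}(A) = \sigma_{int}(\phi(A))$.

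For the boundary part I would argue identically using \cref{thm:boundary}: if $\lambda \in \sigma_{bd}(A)$ there exist $\xi$ with $\|\xi\| = 1$ and $s \ge 0$ with $(A - \lambda I)[\xi^T,1]^T = s[-\xi^T, 1]^T$. Conjugating by $R$ and setting $\xi' := Q\xi$ (so $\|\xi'\| = 1$), the right-hand side becomes $R \cdot s[-\xi^T,1]^T = s[-(Q\xi)^T, 1]^T = s[-\xi'^T, 1]^T$, because $R$ maps $[-\xi^T,1]^T$ to $[-(Q\xi)^T,1]^T$. So $(\phi(A) - \lambda I)[\xi'^T,1]^T = s[-\xi'^T,1]^T$ with $s \ge 0$ and $\|\xi'\| = 1$, which by \cref{thm:boundary} puts $\lambda \in \sigma_{bd}(\phi(A))$. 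The reverse inclusion is again obtained by applying this to $\phi^{-1}$.

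I do not anticipate a genuine obstacle here: the entire content is that $R = Q \oplus [1]$ is an orthogonal matrix that commutes with the cone structure because it fixes the distinguished axis $e_3$ and acts isometrically on the orthogonal complement. The only point requiring a word of care is that the characterizations in \cref{thm-interior} and \cref{thm:boundary} are stated for eigenvectors normalized to have last coordinate $1$; since $R$ preserves the last coordinate, this normalization is respected under conjugation, so no rescaling is needed. One could alternatively phrase the whole proof using the raw complementarity conditions $x \in \mathcal{K}_3$, $(A-\lambda I)x \in \mathcal{K}_3$, $x^T(A - \lambda I)x = 0$ together with the fact that $R$ maps $\mathcal{K}_3$ bijectively onto itself and maps its interior (resp.\ boundary) onto its interior (resp.\ boundary); this avoids even mentioning the normalization and is perhaps the cleanest route.
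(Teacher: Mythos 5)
Your proposal is correct and follows essentially the same route as the paper: invoke \cref{main-thm}, conjugate the normalized eigenvector characterizations of \cref{thm-interior} and \cref{thm:boundary} by $Q\oplus[1]$, and obtain the reverse inclusions via $\phi^{-1}$. Your explicit verification that $R$ carries $s[-\xi^T,1]^T$ to $s[-(Q\xi)^T,1]^T$ in the boundary case is in fact slightly more detailed than the paper's one-line remark, but it is the same argument.
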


\begin{proof}
Let $\phi$ be a linear preserver of the L-spectrum on $M_3$, and let $A\in M_3$. Since $\phi^{-1}$ is also a linear preserver of the L-spectrum on $M_3$, it is enough to show that $\sigma_{int}(A) \subseteq \sigma_{int}(\phi(A))$ and $\sigma_{bd}(A)\subseteq \sigma_{bd}(\phi(A)).$

Let $\lambda \in \sigma_{int}(A)$. We want to show that $\lambda \in \sigma_{int}(\phi(A)).$ By \cref{thm-interior},  there exists $\xi\in \mathbb{R}^2$ with $\|\xi\|<1$ such that
$$(A-\lambda I_3)\begin{bmatrix} \xi \\ 1 \end{bmatrix} =0.$$
Let $\widehat{Q} = \begin{bmatrix} Q &0 \\ 0 & 1 \end{bmatrix}$, where $Q$ is the orthogonal matrix associated with $\phi$ given by \cref{main-thm}.  Then we have
$$(\phi(A)- \lambda I_3)\begin{bmatrix} Q\xi \\ 1 \end{bmatrix} = \widehat{Q} (A - \lambda I_3) \widehat{Q}^T \widehat{Q} \begin{bmatrix} \xi \\ 1 \end{bmatrix} =0,$$
where $\|Q\xi\| = \|\xi\| <1$ since $Q$ is orthogonal. Thus, $\lambda \in \sigma_{int}(\phi(A))$ and hence $\sigma_{int}(A) \subseteq \sigma_{int}(\phi(A))$. By taking $||\xi|| = 1$ instead of $||\xi|| < 1$, we likewise have $\sigma_{bd}(A) \subseteq \sigma_{bd}(\phi(A))$.
 \end{proof}

\section[Image of matrices in S1 under a linear preserver]{Image of matrices in $\mathcal{S}_1$ under a linear preserver}\label{sec:proof1}
As explained in  \cref{sec:main-thm}, in order to prove \cref{main-thm}, we determine the images of matrices in the three subspaces $\mathcal{S}_1$, $\mathcal{S}_2$ and $\mathcal{S}_3$ given in \eqref{3subspaces} under a linear preserver of the L-spectrum. In this section, we focus on $\mathcal{S}_1.$

We begin with a  lemma that  sheds light on the possible images under a linear preserver of the L-spectrum of matrices in $\mathcal{S}_1$ with infinitely many L-eigenvalues.

\begin{lemma}\label{inf-phi}
Let $\phi : M_3 \rightarrow M_3$ be a linear preserver of the L-spectrum and
$$A =  \left[ \begin{array}{cc} 0 & 0 \\ v^T & a \end{array} \right], \quad \text{where} \quad v \neq 0 \quad \text{and} \quad 0 < a +\|v\|.$$
Then either
\begin{equation}\label{image1}
\phi(A) = \left[ \begin{array}{cc} 0 & 0 \\ w^T & a \end{array} \right], \quad \text{where} \quad \|w\|=\|v\|,
\end{equation}
or 
\begin{equation}\label{image2}
\phi(A) = \left[ \begin{array}{cc} 0 & 0 \\ w^T & a + \|v\| - \|w\| \end{array} \right], \quad \text{where} \quad \frac{a + \|v\|}{2} \leq \|w\| \leq a + \|v\| \quad \text{and} \quad a - \|v\| \leq 0 \leq a.
\end{equation}
\end{lemma}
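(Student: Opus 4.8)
The plan is to exploit \cref{3-inf} together with the two facts established earlier: a linear preserver $\phi$ fixes $I_3$ (\cref{thm:bijective}), and $\phi$ preserves the full L-spectrum, hence in particular whether a matrix has infinitely many L-eigenvalues and what that interval of L-eigenvalues is. Since $A\in\mathcal S_1$ with $v\neq0$ and $a+\|v\|>0$, \cref{3-inf} (with $c=0$) says $A$ has infinitely many L-eigenvalues and
$$\left[\max\left\{0,\tfrac{a-\|v\|}{2}\right\},\ \tfrac{a+\|v\|}{2}\right]\subseteq\sigma_{bd}(A).$$
Because $\phi$ preserves the L-spectrum, $\phi(A)$ also has infinitely many L-eigenvalues, so by \cref{3-inf} again $\phi(A)$ must itself have the rigid form
$$\phi(A)=\begin{bmatrix} c'I_2 & 0\\ w^T & a'\end{bmatrix},\qquad w\neq0,\quad c'<a'+\|w\|,$$
with the analogous containment $\left[\max\left\{c',\tfrac{a'+c'-\|w\|}{2}\right\},\ \tfrac{a'+c'+\|w\|}{2}\right]\subseteq\sigma_{bd}(\phi(A))$. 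The first key step is therefore to extract constraints on $c',a',\|w\|$ by comparing the two L-spectra; the cleanest route is to pin down the right endpoint of the maximal such interval and (separately) the structure of $\sigma_L$ away from the interval.

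The second step is to show $c'=0$. One argument: matrices of the form in \cref{3-inf} have L-spectrum consisting essentially of the interval \eqref{L-interval} together with possibly finitely many extra L-eigenvalues coming from Systems I–III; in particular $c'$ (the double eigenvalue of the leading block) should be recoverable from $\sigma_L(\phi(A))$ in the same way $c=0$ is recoverable from $\sigma_L(A)$ — e.g. as an endpoint behavior or via the smallest interior L-eigenvalue. A more robust approach, and the one I would actually carry out, is to use linearity: apply $\phi$ to $tA$ for all $t>0$ and to $A+sB$ for suitable $B\in\mathcal S_1$, and note that $\phi(tA)=t\phi(A)$ must still have the \cref{3-inf} form for every $t$ for which $ta+t\|v\|>0$, which forces the ``$cI_2$'' entry of $\phi(A)$ to scale linearly in $t$ while remaining consistent with $\phi(I_3)=I_3$ contributing the ``$+1$'' on the diagonal under $A\mapsto A+I_3$. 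Pushing this through should yield $c'=0$, i.e. $\phi(A)\in\mathcal S_1\oplus\{\text{diag block}\}$ collapses to $\phi(A)=\begin{bmatrix}0&0\\ w^T& a'\end{bmatrix}$.

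With $c'=0$ in hand, the remaining task is to compare the intervals
$$\left[\max\left\{0,\tfrac{a-\|v\|}{2}\right\},\tfrac{a+\|v\|}{2}\right]\quad\text{and}\quad\left[\max\left\{0,\tfrac{a'-\|w\|}{2}\right\},\tfrac{a'+\|w\|}{2}\right],$$
knowing that these are (up to finitely many points) the full L-spectra, hence equal. Equating right endpoints gives $a+\|v\|=a'+\|w\|$. For the left endpoint there are two cases. If $a-\|v\|>0$ (equivalently the left endpoint is positive), then also $a'-\|w\|>0$ and equality of left endpoints gives $a'-\|w\|=a-\|v\|$; combined with the right-endpoint equation this yields $a'=a$ and $\|w\|=\|v\|$, which is case \eqref{image1}. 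If instead $a-\|v\|\le0$, i.e. $a\le\|v\|$ — and here one should also check, via e.g. the interior L-eigenvalues or $\sigma_L(-A)$ applied under $\phi$, that the other natural constraint $a\ge-\|v\|$ i.e. $a+\|v\|\ge0$ is already assumed and that $a\le\|v\|$ translates to $a-\|v\|\le 0\le a$ as in \eqref{image2} only after ruling out $a<-\|v\|$; since $a+\|v\|>0$ is hypothesized, $a>-\|v\|$ is automatic — then the left endpoint of $\sigma_L(A)$ is $0$, so the left endpoint of $\sigma_L(\phi(A))$ is $0$, forcing $a'-\|w\|\le0\le a'$, i.e. $a'\ge0$ and $a'\le\|w\|$. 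Writing $a'=a+\|v\|-\|w\|$ from the right-endpoint equation and substituting into $0\le a'\le\|w\|$ gives exactly $\tfrac{a+\|v\|}{2}\le\|w\|\le a+\|v\|$, which is \eqref{image2}; and $a-\|v\|\le 0\le a$ needs the extra input $a\ge 0$, which I would get by applying the same interval analysis to the fact that in case \eqref{image2} the left endpoint of $\sigma_L(A)$ must genuinely be $0$ and not positive, forcing the ``$\max$'' in \eqref{L-interval} to be attained by $c=0$, equivalently $\tfrac{a-\|v\|}{2}\le 0$, while a separate check (interior spectrum, or non-negativity of some diagonal quantity preserved by $\phi$) rules out $a<0$.

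The main obstacle I anticipate is the passage from ``$\sigma_L(\phi(A))$ equals this interval plus finitely many points'' to a clean equality of the two closed intervals: a priori the finitely many extra L-eigenvalues from Systems I–III of \cref{thm:char-Leig} could sit outside the interval and muddy the endpoint comparison. Handling this will require either showing those extra points are harmless (they cannot change the fact that the interval is the unique maximal nondegenerate interval contained in $\sigma_L$), or, more directly, re-deriving the endpoints of $\sigma_L(A)$ intrinsically — e.g. $\tfrac{a+\|v\|}{2}=\max\sigma_L(A)$ when this is an isolated structure — so that they transfer across $\phi$ verbatim. I would also need to be careful that $w\neq0$ (automatic from \cref{3-inf} applied to $\phi(A)$) and that all the inequalities in \eqref{image2} are exactly the ones forced, with no extra slack, by tracking both endpoints simultaneously.
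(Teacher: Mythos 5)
Your skeleton is the same as the paper's: invoke \cref{3-inf} to force $\phi(A)=\bmat{dI_2 & 0 \\ w^T & b}$ with $w\neq 0$, $d<b+\|w\|$, and then compare L-spectra. But the load-bearing step is missing. The comparison cannot be run off the containment \eqref{L-interval} alone; one needs the \emph{exact} interior and boundary spectra of matrices of this form. You flag this yourself as ``the main obstacle'' but never resolve it, whereas the paper resolves it with \cref{spec-inf} (proved in the appendix from \cref{symmetricua}), which lists $\sigma_{int}$ and $\sigma_{bd}$ in five cases according to the position of $d$ relative to $b\pm\|w\|$. Without that, your endpoint matching is not justified: for instance, when $a>\|v\|$ the maximum of $\sigma_L(A)$ is the isolated interior eigenvalue $a$, not the right endpoint $\frac{a+\|v\|}{2}$ of the interval, so ``equating right endpoints'' presupposes that one can separate the isolated point from the maximal interval on both sides of $\phi$ --- exactly the case analysis (presence or absence of an isolated L-eigenvalue, Subcases I--IV in the paper) that your sketch leaves out.

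Two further steps are wrong or only gestured at. Your route to $c'=0$ via linearity does not work: $\phi(tA)=t\phi(A)$ and $\phi(A+I_3)=\phi(A)+I_3$ just produce matrices of the same \cref{3-inf} form with top-left block $tc'I_2$ or $(c'+1)I_2$, so no constraint on $c'$ results. In the paper $d=0$ is not established in advance at all; it falls out of matching the exact spectra case by case (e.g.\ when $d<b-\|w\|$ one matches $\{b\}\cup\left[\frac{d+b-\|w\|}{2},\frac{d+b+\|w\|}{2}\right]$ against $\{a\}\cup\left[\frac{a-\|v\|}{2},\frac{a+\|v\|}{2}\right]$ to get $b=a$, $d=0$, $\|w\|=\|v\|$, while other subcases give $d=0$ from the left endpoint being $0$ or yield a contradiction). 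Finally, the condition $0\le a$ in \eqref{image2} comes precisely from the case $a<0$, where $\sigma_L(A)=\{a\}\cup\left[0,\frac{a+\|v\|}{2}\right]$ has an isolated negative eigenvalue and the matching forces \eqref{image1}; your ``separate check (interior spectrum, or non-negativity of some diagonal quantity)'' is not an argument. So the approach is the right one, but the exact-spectrum input, the derivation of $d=0$, and the sign analysis of $a$ all still need to be supplied.
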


\begin{proof}
Since $\phi$ preserves the L-spectrum, by \cref{3-inf}, we have
$$\phi(A) =  \left[ \begin{array}{cc} d I_2 & 0 \\ w^T & b \end{array} \right], \quad \text{where} \quad  w \neq 0, \quad d, b\in \mathbb{R},\quad \text{and}\quad d < b +\|w\|.$$
We consider four cases, in which we make repeated use of \cref{spec-inf} to determine the possibilities for $\phi(A)$ and its L-spectrum.

{\bf Case I:} Assume that $0< a - \|v\| $.
In this case,
$$\sigma_L(A)= \{ a\} \cup \left[  \frac{a - \|v\|}{2} , \frac{a + \|v\|}{2} \right] .$$

Hence $\phi(A)$ must have an isolated L-eigenvalue. We have two possible cases for $\phi(A)$:

{\bf Subcase I.1:} $d<b-\|w\| $. In this case, 
$$\sigma_L(\phi(A))= \{b\} \cup \left[  \frac{d+b - \|w\|}{2} , \frac{d+b + \|w\|}{2} \right].$$
Since $\sigma_L(A) = \sigma_L(\phi(A))$, we have
$$b=a, \quad d=0, \quad \text{and} \quad \|v\|= \|w\|,$$
which leads to \eqref{image1}.

{\bf Subcase I.2:} $b-\|w\|<d < b+\|w\|$ and $d> b$. In this case,
$$\sigma_L(\phi(A)) = \{b\} \cup \left [ d, \frac{d+b+\|w\|}{2} \right ].$$
Since $\sigma_L(A) = \sigma_L(\phi(A))$, we have
$$b = a, \quad d = \frac{a-\|v\|}{2}, \quad \text{and} \quad d+\|w\|=\|v\|.$$
However, this implies
$$a = b < d + \|w\| =  \|v\|,$$
a contradiction since $\|v\| < a$ by assumption. So this subcase is impossible.

{\bf Case II:} Assume that $ a - \|v\| =0 $. In this case,
$$\sigma_L(A)= [0, \|v\|].$$
Hence $\phi(A)$ does not have isolated L-eigenvalues. We have two possible cases for $\phi(A)$:

{\bf Subcase II.1:} $d=b-\|w\| $. In this case,
$$\sigma_L(\phi(A))= [d, \|w\|+d].$$
Since $\sigma_L(A) = \sigma_L(\phi(A))$, we have
$$d=0, \quad \|w\|=\|v\|, \quad \text{and} \quad b= d+\|w\|=\|v\|=a,$$
which leads to \eqref{image1}.

{\bf Subcase II.2:} $b-\|w\| < d < b +\|w\|$ and $d \leq b$. 
In this case,
$$\sigma_L(\phi(A))= \left[d, \frac{b+\|w\|+d}{2}\right].$$
Since $\sigma_L(A) = \sigma_L(\phi(A))$, we have
$$ d=0\quad \text{and}  \quad \frac{b+\|w\|}{2}=\|v\|,$$
or equivalently,
$$d = 0\quad \text{and}  \quad b+\|w\|= 2 \|v\| = a +\|v\|.$$
Combining this with the inequalities that define this case, we see that
$$\|w\| > b - d = a + \|v\| - \|w\|  = 2\|v\| - \|w\|$$
and hence $\|w\| > \|v\|$. Similarly,
$$a - \|v\| = 0 = d \leq b = a + \|v\| - \|w\|$$
and hence $\|w\| \leq 2\|v\|$. Altogether, Subcase II.2 gives the conditions
$$d = 0, \quad b = a + \|v\| - \|w\|, \quad \|v\| < \|w\| \leq 2\|v\|, \quad \text{and} \quad a - \|v\| = 0,$$
which leads to \eqref{image2}.

{\bf Case III:} Assume that $a - \|v\| < 0 < a+\|v\|$ and $ a \geq 0$. In this case,
$$\sigma_L(A)= \left [0, \frac{a+\|v\|}{2} \right ].$$

Hence $\phi(A)$ does not have isolated L-eigenvalues.
We have two possible cases for $\phi(A)$:

{\bf Subcase III.1:} $b-\|w\| = d$. In this case,
$$\sigma_L(\phi(A))= [d, \|w\|+d].$$
Since $\sigma_L(A) = \sigma_L(\phi(A))$, we have
$$d=0\quad \text{and}  \quad \|w\|=\frac{a+\|v\|}{2}, \quad b =d+\|w\|=  \frac{a+\|v\|}{2}.$$
Note that $b= a+\|v\|-\|w\|$ and $a-\|v\| \leq 0 \leq a.$ Thus this case leads to \eqref{image2}.

{\bf Subcase III.2:} $b-\|w\|< d < b+\|w\|$ and $d\leq b$. 
In this case,
$$\sigma_L(\phi(A))=\left [d, \frac{b+\|w\|+d}{2}\right ].$$
Since $\sigma_L(A) = \sigma_L(\phi(A))$, we have
$$ d= 0\quad \text{and} \quad b+\|w\|=a+\|v\|. $$

Combining this with the inequalities that define Subcase III.2, we see that
$$\|w\| > b - d = a + \|v\| - \|w\| $$
and hence $\|w\| > \frac{a + \|v\| }{2}$. Similarly,
$$0 = d \leq b = a + \|v\| - \|w\|$$
and hence $\|w\| \leq a + \|v\| $, which leads to \eqref{image2}.

{\bf Case IV:} Assume that $a - \|v\| < 0 < a +\|v\|$ and $ a<0$. In this case,
$$\sigma_L(A)=\{a\} \cup \left [0, \frac{a+\|v\|}{2} \right ].$$
Hence, $\phi(A)$ must have an isolated L-eigenvalue. We have two possible cases for $\phi(A)$:

{\bf Subcase IV.1:} $d < b-\|w\|$. In this case, 
$$\sigma_L(\phi(A))= \{b\} \cup \left[  \frac{d+b - \|w\|}{2} , \frac{d+b + \|w\|}{2} \right].$$
Since $\sigma_L(A) = \sigma_L(\phi(A))$, we have
$$b = a,\quad   \frac{d +a - \|w\|}{2}= 0, \quad \text{and} \quad d +\|w\|=\|v\|.$$
However, this implies
$$a = b > d + \|w\| =  \|v\|,$$
a contradiction since $a - \|v\| < 0$ by assumption. So this subcase is impossible.

{\bf Subcase IV.2:} $b-\|w\|<d< b+\|w\| $ and $d>b$. In this case,
$$\sigma_L(\phi(A)) = \{b\} \cup \left [ d, \frac{d+b+\|w\|}{2} \right ].$$
Since $\sigma_L(A) = \sigma_L(\phi(A))$, we have
$$b = a, \quad d = 0, \quad \text{and} \quad  \|w\|=  \|v\|,$$
which leads to \eqref{image1}.
\end{proof}

We show next that the subspace $\mathcal{S}_1$   is invariant under linear preservers of the L-spectrum.
\begin{lemma}\label{Cphiinvariant}
Let $\phi: M_3 \rightarrow M_3$ be a linear preserver of the L-spectrum. Then the subspace
$$\mathcal{S}_1=\left \{ \left[ \begin{array}{cc} 0 & 0 \\ v^T & a\end{array} \right]: v\in \mathbb{R}^{2}, \; a \in \mathbb{R} \right\}$$
of $M_3$ is $\phi$-invariant, that is, $\phi(\mathcal{S}_1) \subseteq \mathcal{S}_1.$
\end{lemma}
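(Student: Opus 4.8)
The plan is to exhibit a basis of $\mathcal{S}_1$ consisting entirely of matrices to which \cref{inf-phi} applies, and then conclude by linearity of $\phi$. Since \cref{inf-phi} already shows that any matrix $\begin{bmatrix} 0 & 0 \\ v^T & a\end{bmatrix}$ with $v \neq 0$ and $a + \|v\| > 0$ is sent by $\phi$ into a matrix of the form \eqref{image1} or \eqref{image2} — both of which again lie in $\mathcal{S}_1$ — it suffices to cover $\mathcal{S}_1$ by such matrices.

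Concretely, let $e_1, e_2$ denote the standard basis of $\mathbb{R}^2$ and set
\[
A_1 = \begin{bmatrix} 0 & 0 \\ e_1^T & 1 \end{bmatrix}, \qquad A_2 = \begin{bmatrix} 0 & 0 \\ e_2^T & 1 \end{bmatrix}, \qquad A_3 = \begin{bmatrix} 0 & 0 \\ e_1^T & 0 \end{bmatrix}.
\]
Each $A_i$ has the form $\begin{bmatrix} 0 & 0 \\ v^T & a\end{bmatrix}$ with $v \neq 0$ and $a + \|v\| > 0$, so \cref{inf-phi} gives $\phi(A_i) \in \mathcal{S}_1$ for $i = 1,2,3$. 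Moreover $\{A_1, A_2, A_3\}$ is a basis of the three-dimensional space $\mathcal{S}_1$: one checks that $A_1 - A_3$, $A_2 - A_1 + A_3$, and $A_3$ recover the matrix units $E_{33}$, $E_{32}$, $E_{31}$, which span $\mathcal{S}_1$. Since $\phi$ is linear, an arbitrary $A \in \mathcal{S}_1$ is a linear combination of $A_1, A_2, A_3$, hence $\phi(A)$ is the same linear combination of $\phi(A_1), \phi(A_2), \phi(A_3) \in \mathcal{S}_1$. Therefore $\phi(\mathcal{S}_1) \subseteq \mathcal{S}_1$.

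There is no substantial obstacle here; the only subtlety is that \cref{inf-phi} does not apply to matrices in $\mathcal{S}_1$ with $v = 0$ (nor to those with $a + \|v\| \le 0$), so one cannot argue matrix-by-matrix and must instead route the argument through a spanning set all of whose members satisfy the hypotheses of \cref{inf-phi}; the explicit choice $A_1, A_2, A_3$ above shows this is possible, which finishes the proof.
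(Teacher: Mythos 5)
Your proof is correct and uses essentially the same idea as the paper: both arguments reduce the general case to matrices satisfying the hypotheses of \cref{inf-phi} (whose images \eqref{image1} and \eqref{image2} lie in $\mathcal{S}_1$) and then invoke linearity of $\phi$ together with the fact that $\mathcal{S}_1$ is a subspace. The only difference is cosmetic: you do this once and for all with the spanning set $A_1,A_2,A_3$, whereas the paper handles an arbitrary $A\in\mathcal{S}_1$ by a three-case decomposition into differences of such matrices.
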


\begin{proof}
We consider three cases:

\medskip{}
Case I: Assume $A=\left[ \begin{array}{cc} 0 & 0 \\ v^T & a\end{array} \right]$ with $v\neq 0$ and $0 < a +\|v\|.$ Then by  \cref{inf-phi}, $\phi(A) \in \mathcal{S}_1.$ 

\medskip
Case II: Assume $A=\left[ \begin{array}{cc}0 & 0 \\ 0 & a\end{array} \right].$
Let $v\in \mathbb{R}^2$ be a nonzero vector such that $0 < a +\|v\|.$ We have
$$A=\left[ \begin{array}{cc}0 & 0 \\ 0 & a\end{array} \right]=\left[ \begin{array}{cc}0 & 0 \\ v^T & a\end{array} \right]-\left[ \begin{array}{cc}0 & 0 \\ v^T & 0\end{array} \right]=: B - M.$$
Note that $B$ and $M$ have infinitely many L-eigenvalues by \cref{3-inf}. Thus, by Case I, $\phi(B), \phi(M)\in \mathcal{S}_1.$
Then since $\phi$ is linear and $\mathcal{S}_1$ is a subspace,
$$\phi(A) =\phi(B)-\phi(M) \in \mathcal{S}_1.$$

\medskip
Case III: Assume $A=\left[ \begin{array}{cc} 0 & 0 \\ v^T & a\end{array} \right]$ with $v\neq 0$ and $0 \geq a+\|v\|$.
Let $d\in \mathbb{R}$ be such that $0 < a +d +\|v\|.$ Then we have
$$A=\left[ \begin{array}{cc}0 & 0 \\ v^T & a\end{array} \right]=\left[ \begin{array}{cc} 0 & 0 \\ v^T & a+d \end{array} \right]-\left[ \begin{array}{cc} 0 & 0 \\ 0 & d\end{array} \right]=: B - M.$$
Note that $\phi(B)\in \mathcal{S}_1$ by Case I and that $\phi(M)\in \mathcal{S}_1 $ by Case II. Then since $\phi$ is linear and $\mathcal{S}_1$ is a subspace,
$$\phi(A) = \phi(B)-\phi(M) \in \mathcal{S}_1.$$
\end{proof}

We show next that we can partition the subspace $\mathcal{S}_1$ into three subsets which are also $\phi$-invariant.

\begin{lemma}\label{lem:inv-sub-S1}
Let $\phi: M_3 \rightarrow M_3$ be a linear preserver of the Lorentz spectrum. Then
$\phi(\mathcal{C}_1) \subseteq \mathcal{C}_1,$ $\phi(\mathcal{C}_2 \cup \mathcal{C}_4) \subseteq \mathcal{C}_2 \cup \mathcal{C}_4$, and $\phi(\mathcal{C}_3 \cup \mathcal{C}_5) \subseteq \mathcal{C}_3 \cup \mathcal{C}_5$, where
$$\mathcal{C}_1:= \left \{ \left[ \begin{array}{cc}0 & 0 \\ v^T & a\end{array} \right]: 0 \neq v\in \mathbb{R}^{2},\; a\in \mathbb{R},\; 0 < a+\|v\| \right\}, $$

$$\mathcal{C}_2:= \left \{ \left[ \begin{array}{cc}  0 & 0 \\ v^T & a\end{array} \right]: 0 \neq v\in \mathbb{R}^{2},\; a\in \mathbb{R},\; a+\|v\|=0 \right\}, $$

$$\mathcal{C}_3:= \left \{ \left[ \begin{array}{cc} 0 & 0 \\ v^T & a\end{array} \right]: 0 \neq v\in \mathbb{R}^{2},\; a\in \mathbb{R},\; a+\|v\|<0 \right\}, $$

$$\mathcal{C}_4:= \left \{ \left[ \begin{array}{cc} 0 & 0 \\ 0 & a\end{array} \right]:  a\in \mathbb{R},\;  a>0\right\}, \quad and $$

$$\mathcal{C}_5:= \left \{ \left[ \begin{array}{cc}0 & 0 \\ 0 & a\end{array} \right]:  a\in \mathbb{R},\;  a \leq 0\right\}. $$
\end{lemma}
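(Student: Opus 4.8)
The plan is to reduce the statement to a comparison of the \emph{cardinality} of the L-spectrum across the five classes. First note that $\mathcal{C}_1,\dots,\mathcal{C}_5$ are pairwise disjoint and their union is $\mathcal{S}_1$: a matrix $A=\bmat{0 & 0 \\ v^T & a}$ of $\mathcal{S}_1$ lies in $\mathcal{C}_1$, $\mathcal{C}_2$ or $\mathcal{C}_3$ according as $a+\|v\|$ is positive, zero or negative when $v\neq 0$, and in $\mathcal{C}_4$ or $\mathcal{C}_5$ according as $a>0$ or $a\le 0$ when $v=0$. By \cref{Cphiinvariant}, $\phi(\mathcal{S}_1)\subseteq\mathcal{S}_1$, and since $\phi$ preserves the L-spectrum it preserves its cardinality. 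Hence the lemma follows once we show that, for $A\in\mathcal{S}_1$,
\begin{align*}
|\sigma_L(A)|=\infty &\iff A\in\mathcal{C}_1,\\
|\sigma_L(A)|=2 &\iff A\in\mathcal{C}_2\cup\mathcal{C}_4,\\
|\sigma_L(A)|=1 &\iff A\in\mathcal{C}_3\cup\mathcal{C}_5.
\end{align*}

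The first equivalence is immediate from \cref{3-inf}: a matrix of $\mathcal{S}_1$ has precisely the form occurring in that theorem with $c=0$, so it has infinitely many L-eigenvalues exactly when $v\neq 0$ and $0<a+\|v\|$, i.e.\ exactly when it lies in $\mathcal{C}_1$. (One could also obtain $\phi(\mathcal{C}_1)\subseteq\mathcal{C}_1$ directly from \cref{inf-phi}, since both \eqref{image1} and \eqref{image2} produce an image with $w\neq 0$ and $b+\|w\|=a+\|v\|>0$.) It remains to compute $\sigma_L(A)$ for $A=\bmat{0 & 0 \\ v^T & a}$ in $\mathcal{S}_1\setminus\mathcal{C}_1$, that is, for $v\neq 0$ with $a+\|v\|\le 0$, or for $v=0$ with arbitrary $a$.

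For such $A$ we have $\tilde A=0$ and $u=0$, which makes \cref{thm-interior} and \cref{thm:boundary} easy to apply. Writing a candidate L-eigenvector as $[\xi^T,1]^T$: the interior condition $(A-\lambda I_3)[\xi^T,1]^T=0$ forces $\lambda\xi=0$ and $v^T\xi+a=\lambda$, and in every case the only solution value is $\lambda=a$ (the possibility $\lambda=0$ with $\|\xi\|<1$ is excluded by $a+\|v\|\le 0$ when $v\neq 0$, and forces $a=0=\lambda$ when $v=0$). Thus the unique interior L-eigenvalue is $a$, with eigenvector $e_3$. For the boundary, \cref{thm:boundary} gives $(A-\lambda I_3)[\xi^T,1]^T=s[-\xi^T,1]^T$ with $\|\xi\|=1$ and $s\ge 0$, which forces $\lambda=s\ge 0$ and $v^T\xi+a=2\lambda$; as $v^T\xi$ ranges over $[-\|v\|,\|v\|]$ along the unit circle, this admits a nonnegative $\lambda$ exactly when $v\neq 0$ with $a+\|v\|=0$ (then $\lambda=0$) or when $v=0$ with $a>0$ (then $\lambda=a/2$), and has no solution otherwise. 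Collecting the cases, $\sigma_L(A)=\{a,0\}$ on $\mathcal{C}_2$ and $\sigma_L(A)=\{a,a/2\}$ on $\mathcal{C}_4$ — two distinct values, since $a\neq 0$ in both — whereas $\sigma_L(A)=\{a\}$ on $\mathcal{C}_3\cup\mathcal{C}_5$. This proves the three displayed equivalences, and hence the lemma.

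I expect the only delicate point to be the bookkeeping in the boundary computation: checking that $v^T\xi+a=2\lambda$, $\|\xi\|=1$, $s=\lambda\ge 0$ has a nonnegative solution in exactly the stated cases, and noting the harmless coincidence at $A=0\in\mathcal{C}_5$, where $\lambda=0$ is simultaneously an interior and a boundary L-eigenvalue but $\sigma_L(A)=\{0\}$ is still a singleton. Everything else is a formal consequence of \cref{Cphiinvariant} together with the invariance of $|\sigma_L(\cdot)|$ under $\phi$.
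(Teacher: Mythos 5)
Your proof is correct and follows essentially the same route as the paper: combine the $\mathcal{S}_1$-invariance from \cref{Cphiinvariant} with the fact that a spectrum-preserving map preserves the cardinality of $\sigma_L$, the classes being distinguished by having infinitely many, exactly two, or exactly one L-eigenvalue. The only difference is cosmetic: the paper cites \cref{spec-inf} and \cref{diagonalca} for these counts (and \cref{inf-phi} for $\mathcal{C}_1$), whereas you verify them directly from \cref{thm-interior} and \cref{thm:boundary}, and your computations match.
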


\begin{proof}
The result for $\mathcal{C}_1$ follows from \cref{inf-phi}. Then by \cref{spec-inf} and \cref{diagonalca}, every matrix in $\mathcal{C}_2 \cup \mathcal{C}_4$ has exactly two L-eigenvalues, and every matrix in $\mathcal{C}_3 \cup \mathcal{C}_3$ has exactly one L-eigenvalue.
Thus, the results for $\mathcal{C}_2 \cup \mathcal{C}_4$ and for $\mathcal{C}_3\cup \mathcal{C}_5$ follow from  \cref{Cphiinvariant}.
\end{proof}

Next we show that any linear preserver of the L-spectrum restricted to the subspace $\mathcal{C}_4 \cup \mathcal{C}_5$ is the identity map. Henceforth we denote the matrix $e_ie_j^T$ by $E_{ij}$.

\begin{lemma}\label{Enn}
Let $\phi: M_3 \rightarrow M_3$ be a linear preserver of the L-spectrum, and let $A=\left[ \begin{array}{cc} 0 & 0 \\ 0 & a \end{array} \right]$, where $a\in \mathbb{R}$. Then, $\phi(A) =A$.
\end{lemma}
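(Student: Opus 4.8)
The plan is to exploit the fact, established in \cref{lem:inv-sub-S1}, that $\phi$ maps the one-parameter family $\mathcal{C}_4 \cup \mathcal{C}_5 = \{\,\diag(0,0,a) : a \in \mathbb{R}\,\}$ into itself, and then pin down the scalar. First I would write $A = aE_{33}$, so that by linearity it suffices to understand $\phi(E_{33})$. By \cref{lem:inv-sub-S1}, $\phi(E_{33}) = \diag(0,0,c)$ for some real constant $c$, and hence $\phi(aE_{33}) = acE_{33}$ for all $a$. The goal is to show $c = 1$.

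To find $c$, I would compute the L-spectrum of $A = \diag(0,0,a)$ explicitly and compare it with that of $\phi(A) = \diag(0,0,ac)$. A direct computation (or an appeal to the description of the L-spectrum of diagonal-type matrices already used implicitly in \cref{inf-phi} — note the referenced \texttt{spec-inf} and \texttt{diagonalca} results) shows that for $a > 0$ the matrix $\diag(0,0,a)$ has L-spectrum $[0,a]$ — indeed $0$ is a boundary L-eigenvalue via any $\xi$ with $\|\xi\|=1$ (since $(A-0\cdot I)[\xi^T,1]^T = [0,0,a]^T = a[-\xi^T,1]^T$ fails unless... let me instead recall that by \cref{3-inf} with $v=0$ this is a degenerate case) — in any event, the set $\sigma_L(\diag(0,0,a))$ is an interval whose endpoints depend monotonically and homogeneously on $a$. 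Since $\sigma_L(A) = \sigma_L(\phi(A))$, we get an equation forcing $ac$ to produce the same interval as $a$, and comparing (say) the nonzero endpoint gives $ac = a$ for all $a \neq 0$, hence $c = 1$.

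An alternative, cleaner route avoids computing the spectrum at all: by \cref{thm:bijective} we know $\phi(I_3) = I_3$. Writing $I_3 = E_{11} + E_{22} + E_{33}$ and using that $\phi$ preserves each of the invariant subspaces in the decomposition \eqref{3subspaces} (in particular $\phi(E_{33}) \in \mathcal{C}_4 \cup \mathcal{C}_5$, so $\phi(E_{33}) = cE_{33}$, while $\phi(E_{11}+E_{22}) \in \mathcal{S}_2$), the identity $I_3 = \phi(I_3) = \phi(E_{11}+E_{22}) + cE_{33}$ forces the $(3,3)$ entries to match, giving $c = 1$. Then $\phi(A) = \phi(aE_{33}) = a\phi(E_{33}) = aE_{33} = A$.

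I expect the main (and only real) obstacle to be justifying that $\phi(E_{33})$ is diagonal of the form $\diag(0,0,c)$ and nothing more — but this is exactly the content of \cref{lem:inv-sub-S1} ($\phi(\mathcal{C}_3\cup\mathcal{C}_5)\subseteq\mathcal{C}_3\cup\mathcal{C}_5$ handles $a\le 0$ and $\phi(\mathcal{C}_4)\subseteq\mathcal{C}_4$... actually $\phi(\mathcal{C}_2\cup\mathcal{C}_4)\subseteq\mathcal{C}_2\cup\mathcal{C}_4$ handles $a>0$), combined with the observation that $\mathcal{C}_2$ and $\mathcal{C}_3$ consist of matrices with nonzero off-diagonal block $v^T$, which $E_{33}$ is not — so one must additionally argue $\phi(E_{33})$ cannot land in $\mathcal{C}_2$ or $\mathcal{C}_3$, which follows because those sets have a trivial intersection with the image of the rank-one diagonal matrix under the linearity constraints, or more simply because $\phi$ restricted to $\mathcal{C}_4\cup\mathcal{C}_5$ is linear in $a$ and hence its image is a line through the origin inside $\mathcal{S}_1$, which can only be the line $\mathbb{R}E_{33}$. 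Once that is nailed down, the scalar computation is routine, so I would present the $\phi(I_3)=I_3$ argument as the quickest finish.
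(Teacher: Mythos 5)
Your skeleton (reduce to $\phi(E_{33})$ by linearity, invoke \cref{lem:inv-sub-S1}, then get $a\le 0$ by scaling) matches the paper's, but the step that carries all the weight — ruling out $\phi(E_{33})\in\mathcal{C}_2$ — is asserted rather than proved. \cref{lem:inv-sub-S1} only gives $\phi(E_{33})\in\mathcal{C}_2\cup\mathcal{C}_4$, and $\mathcal{C}_2$ contains non-diagonal matrices. Your ``trivial intersection'' remark is not an argument, and the ``line through the origin'' claim begs the question: the span of any matrix in $\mathcal{C}_2$ (one with $v\neq 0$ and $a=-\|v\|$) is equally a line through the origin inside $\mathcal{S}_1$, so ``can only be $\mathbb{R}E_{33}$'' is exactly what needs proof. (It can be salvaged: if $\phi(E_{33})\in\mathcal{C}_2$, then $\phi(-E_{33})=-\phi(E_{33})$ lies in $\mathcal{C}_1$, contradicting $\phi(\mathcal{C}_3\cup\mathcal{C}_5)\subseteq\mathcal{C}_3\cup\mathcal{C}_5$. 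The paper instead compares spectra: a matrix in $\mathcal{C}_2$ has L-spectrum $\{-\|w\|,0\}$ by \cref{spec-inf}, which cannot equal $\sigma_L(aE_{33})=\{a,a/2\}$ for $a>0$, forcing $\phi(aE_{33})\in\mathcal{C}_4$ and then $\phi(aE_{33})=aE_{33}$ by \cref{diagonalca}.) Relatedly, your stated spectrum $\sigma_L(\diag(0,0,a))=[0,a]$ is wrong: by \cref{diagonalca} it is the two-point set $\{a,a/2\}$ for $a>0$, not an interval; the homogeneity/``compare the largest eigenvalue'' fix does give $c=1$ once $\phi(E_{33})=cE_{33}$ with $c>0$ is established, but as written the computation is incorrect.

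The route you say you would actually present — $\phi(I_3)=I_3$ plus matching $(3,3)$ entries — is circular in the paper's logical order. It needs $\phi(E_{11}+E_{22})$ to have zero $(3,3)$ entry, i.e.\ (part of) $\phi(\mathcal{S}_2)\subseteq\mathcal{S}_2$. At this point only $\phi(\mathcal{S}_1)\subseteq\mathcal{S}_1$ (\cref{Cphiinvariant}) and \cref{lem:inv-sub-S1} are available; the $\mathcal{S}_2$ statement is \cref{[A 0; 0 0] partial result}, whose proof uses \cref{thm:main2}, which in turn uses \cref{Enn} itself. So the spectral-comparison route (the paper's) is the one that works, and it must be preceded by a genuine exclusion of $\mathcal{C}_2$ as above.
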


\begin{proof}
Assume first that $A\in \mathcal{C}_4$. Then $a>0$ and $\sigma_L(A)=\{a, \frac{a}{2}\}$ by \cref{diagonalca}.
We know that $\phi(A)\in \mathcal{C}_2\cup \mathcal{C}_4$ by \cref{lem:inv-sub-S1}. If $\phi(A)\in \mathcal{C}_2$, then $\sigma_L(\phi(A))=\{a,0\}$ by \cref{spec-inf}, a contradiction. Therefore, $\phi(A)\in \mathcal{C}_4$, which implies $\phi(A) = A$ by \cref{diagonalca}. In particular, we must have $\phi(E_{33})= E_{33}$. 

Now assume that $A\in \mathcal{C}_5.$ Then $A = aE_{33}$ and $a \leq 0$, so by linearity, we have
$$\phi(A)= \phi(aE_{33}) = a\phi(E_{33})=  a E_{33} = A.$$
\end{proof}

We next present the main result in this section, which gives the image of matrices in the subspace $\mathcal{S}_1$ under linear preservers of the L-spectrum.

\begin{theorem}\label{thm:main2}
Let $\phi:M_3\rightarrow M_3$ be a linear preserver of the L-spectrum. Then there exists an orthogonal $2 \times 2$ matrix $Q$ such that for any matrix $A = \begin{bmatrix} 0 & 0 \\ v^T & a\end{bmatrix}\in \mathcal{S}_1$,
$$\phi(A)=\begin{bmatrix} 0 & 0 \\ (Qv)^T & a\end{bmatrix}.$$
We call $Q$ the \emph{orthogonal matrix associated with $\phi$}.
\end{theorem}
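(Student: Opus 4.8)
The plan is to first show that $\phi$ fixes the $(3,3)$-entry of every matrix in $\mathcal{S}_1$, which reduces the problem to understanding how $\phi$ transforms the row vector $v$; then to observe that this transformation is a linear isometry of $\mathbb{R}^2$ and hence left multiplication by an orthogonal matrix $Q\in M_2$; and finally to reassemble a general matrix of $\mathcal{S}_1$ using linearity.

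For the first step, I would fix $A=\begin{bmatrix} 0 & 0 \\ v^T & a\end{bmatrix}$ with $v\neq 0$ (the case $v=0$ is already covered by \cref{Enn}) and choose a scalar $t>0$ large enough that $(a+t)-\|v\|>0$. Setting $B=\begin{bmatrix} 0 & 0 \\ v^T & a+t\end{bmatrix}$, we have $v\neq 0$ and $0<(a+t)+\|v\|$, so \cref{inf-phi} applies to $B$; moreover, since $(a+t)-\|v\|>0$, the second alternative \eqref{image2} of \cref{inf-phi} — which would force the $(3,3)$-entry of $B$ minus $\|v\|$ to be nonpositive — is impossible, so $\phi(B)=\begin{bmatrix} 0 & 0 \\ w^T & a+t\end{bmatrix}$ with $\|w\|=\|v\|$. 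Writing $A=B-tE_{33}$ and using the linearity of $\phi$ together with $\phi(E_{33})=E_{33}$ from \cref{Enn} then gives $\phi(A)=\begin{bmatrix} 0 & 0 \\ w^T & a\end{bmatrix}$ with $\|w\|=\|v\|$. Specializing to $a=0$, the subspace $\mathcal{V}=\left\{\begin{bmatrix} 0 & 0 \\ v^T & 0\end{bmatrix}:v\in\mathbb{R}^2\right\}$ is $\phi$-invariant, and, identifying $\begin{bmatrix} 0 & 0 \\ v^T & 0\end{bmatrix}$ with $v$, the restriction $\phi|_{\mathcal{V}}$ is a map $\psi:\mathbb{R}^2\to\mathbb{R}^2$ with $\|\psi(v)\|=\|v\|$ for all $v$ (the case $v=0$ following from $\phi(0)=0$).

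For the second step, since $\psi$ is the restriction of the linear map $\phi$ to the subspace $\mathcal{V}$, it is itself linear; a linear norm-preserving map preserves inner products (by polarization) and so equals left multiplication by an orthogonal matrix $Q\in M_2$, i.e.\ $\psi(v)=Qv$. Finally, for an arbitrary $A=\begin{bmatrix} 0 & 0 \\ v^T & a\end{bmatrix}\in\mathcal{S}_1$, I would decompose $A=\begin{bmatrix} 0 & 0 \\ v^T & 0\end{bmatrix}+aE_{33}$ and apply linearity of $\phi$ together with $\phi(aE_{33})=aE_{33}$ to conclude $\phi(A)=\begin{bmatrix} 0 & 0 \\ (Qv)^T & a\end{bmatrix}$.

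The main obstacle is the first step, namely eliminating the alternative image \eqref{image2} of \cref{inf-phi}, in which $\phi$ could simultaneously rotate $v$ and shift $a$. The key device is that translating the $(3,3)$-entry upward by a large scalar pushes the matrix into the regime $a-\|v\|>0$, where \eqref{image2} is impossible, and subtracting that scalar back off is something $\phi$ does exactly (by \cref{Enn}), so the clean conclusion transfers back to the original matrix. Everything after that — the linearity of $\psi$, its orthogonality, and the reassembly of a general $A$ — is a routine consequence of the linearity of $\phi$.
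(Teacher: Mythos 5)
Your proof is correct, and its overall architecture matches the paper's: reduce to the $a=0$ slice via \cref{Enn} and linearity, apply \cref{inf-phi}, and conclude orthogonality of $Q$ from norm preservation (your polarization argument and the paper's explicit construction of $Q$ from $\phi(E_{31})$, $\phi(E_{32})$ are interchangeable). The one genuinely different ingredient is how you rule out the alternative \eqref{image2}. The paper works directly with $B=\begin{bmatrix}0&0\\ v^T&0\end{bmatrix}$, where both \eqref{image1} and \eqref{image2} are a priori possible, and then forms the auxiliary matrix $H=\begin{bmatrix}0&0\\ v^T&-\|v\|\end{bmatrix}\in\mathcal{C}_2$; it needs \cref{spec-inf} to know $\sigma_L(H)=\{-\|v\|,0\}$ and the observation that $-\|w\|$ is an interior L-eigenvalue of $\phi(H)$ to force $\|w\|=\|v\|$. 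You instead shift the $(3,3)$-entry \emph{upward} by a large $t$ so that the matrix lands in the regime $(a+t)-\|v\|>0$, where the side condition $a-\|v\|\le 0$ built into \eqref{image2} already excludes that alternative, and then transfer the conclusion back by subtracting $tE_{33}$, which $\phi$ fixes by \cref{Enn}. Your route is slightly more economical in that it uses only the statement of \cref{inf-phi} plus \cref{Enn} and linearity, with no appeal to \cref{spec-inf} or to any spectral computation for the image matrix; the paper's route, by contrast, stays within the $a+\|v\|=0$ family and illustrates how the two-point L-spectrum of $\mathcal{C}_2$ matrices pins down both the norm of $w$ and the $(3,3)$-entry simultaneously. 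Both are valid; yours is a clean simplification of this step.
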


\begin{proof}
By  \cref{Enn} and the linearity of $\phi$, it is enough to show that 
$$
\phi(B):=\phi \left( \begin{bmatrix} 0 & 0 \\ v^T & 0\end{bmatrix}\right) = \begin{bmatrix} 0 & 0 \\ (Qv)^T & 0\end{bmatrix},
$$
for some orthogonal matrix $Q$ independent of $v$. 

This is trivially true for $v = 0$ since $\phi(0)=0$, so assume $v \neq 0$. 
Since $\phi$ preserves the L-spectrum and $B\in \mathcal{C}_1$, by \cref{inf-phi}, we have
$$\phi(B)= \begin{bmatrix}0 & 0 \\ w^T & ||v|| - ||w|| \end{bmatrix}$$
for some $w \in \R^2$ such that $\frac{\|v\|}{2} \leq \|w\| \leq \|v\|$. Let $H=\left[\begin{array}{cc} 0 & 0 \\ v^T & -\|v\|\end{array}\right]\in \mathcal{C}_2$.
By linearity and by \cref{Enn},
\begin{align*}
\phi \left(\left[\begin{array}{cc} 0 & 0 \\ v^T & -\|v\|\end{array}\right]\right)& =\phi\left(\left[\begin{array}{cc} 0 & 0 \\ 0 & -\|v\|\end{array}\right]\right)+\phi \left(\left[\begin{array}{cc} 0 & 0 \\ v^T & 0\end{array}\right]\right)\\
&=\left[\begin{array}{cc} 0 & 0 \\ 0 & -\|v\|\end{array}\right]+\left[\begin{array}{cc} 0 & 0 \\ w^T & \|v\|-\|w\|\end{array}\right]= \begin{bmatrix} 0 & 0 \\ w^T & - \|w\|\end{bmatrix}.
\end{align*}
 Notice that $-\|w\| \in \sigma_{int}(\phi(H))$ and 
 $\sigma_L(H)=\{-\|v\|, 0\}$ by \cref{spec-inf}. Since $||w|| \geq \frac{||v||}{2} > 0$, this implies $\|w\| = \|v\|$. In particular, we have
$$\phi(E_{31}) = \begin{bmatrix} 0 & 0 \\ p^T & 0 \end{bmatrix}\quad \text{and} \quad \phi(E_{32}) = \begin{bmatrix} 0 & 0 \\ q^T & 0 \end{bmatrix}, \quad \text{where} \quad \|p\|=\|q\|=1.$$
Let $p=[p_1, p_2]^T$, $q=[q_1, q_2]^T$, and $Q = \begin{bmatrix} p_1 & q_1 \\ p_2 & q_2\end{bmatrix}.$
Then for any $A=\begin{bmatrix} 0 & 0 \\ v^T & 0 \end{bmatrix}$, where $v = [v_1, v_2]^T$, we have
\begin{align*}
    \phi(A) &= v_1 \phi( E_{31}) + v_2 \phi( E_{32}) = \begin{bmatrix} 0 & 0 \\ v_1p^T +v_2q^T & 0\end{bmatrix} 
    = \begin{bmatrix} 0 & 0 \\ (Qv)^T & 0\end{bmatrix}.
\end{align*}
Since $\|Qv\|= \|v\|$ for all $v$, we deduce that $Q$ is an orthogonal matrix. 
\end{proof}

\section[Image of matrices in S2 and S3 under a linear preserver]{Image of matrices in $\mathcal{S}_2$ and $\mathcal{S}_3$ under a linear preserver}\label{sec:proof2}
We begin this section with some technical lemmas that will be used in the proof of several results. Here we denote the adjugate of a matrix $A$ by $\adj(A)$ and the spectral radius of a matrix $A$ by $\rho(A)$.

\subsection{Auxiliary results}
\begin{lemma}(Rayleigh-Ritz Theorem)
Let $S$ be a symmetric matrix and let $\lambda_{\max}$ be the largest eigenvalue of $S$. Then
$$\lambda_{\max} = \max_{\|x\|=1} x^T S x.$$
\end{lemma}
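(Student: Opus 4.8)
The plan is to reduce to the diagonal case via the spectral theorem for real symmetric matrices. First I would write $S = U\Lambda U^T$, where $U$ is an orthogonal matrix and $\Lambda = \diag(\lambda_1,\dots,\lambda_n)$ lists the (necessarily real) eigenvalues of $S$, ordered so that $\lambda_1 = \lambda_{\max}$. Given any $x$ with $\|x\| = 1$, I would set $y = U^T x$; since $U$ is orthogonal, $\|y\| = \|x\| = 1$, and
$$x^T S x = x^T U \Lambda U^T x = y^T \Lambda y = \sum_{i=1}^n \lambda_i y_i^2.$$

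Next, I would bound this quadratic form. Since $\sum_{i=1}^n y_i^2 = 1$ and each term $y_i^2$ is nonnegative, we get $\sum_{i=1}^n \lambda_i y_i^2 \leq \lambda_{\max}\sum_{i=1}^n y_i^2 = \lambda_{\max}$, hence $x^T S x \leq \lambda_{\max}$ for every unit vector $x$. For the reverse direction, I would exhibit an explicit maximizer: letting $x$ be a unit eigenvector of $S$ associated with $\lambda_{\max}$, one has $x^T S x = \lambda_{\max}\, x^T x = \lambda_{\max}$. Together these show that the maximum over the unit sphere is attained and equals $\lambda_{\max}$.

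There is no real obstacle here; the statement is essentially a restatement of the spectral theorem, which may itself simply be cited. The only point requiring a word of care is that $\max_{\|x\|=1} x^T S x$ is a genuine maximum and not merely a supremum, but this is immediate either from the explicit eigenvector constructed above or from continuity of the map $x \mapsto x^T S x$ on the compact unit sphere.
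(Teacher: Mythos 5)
Your proof is correct and is the canonical argument (spectral decomposition, bounding the quadratic form $\sum_i \lambda_i y_i^2$ by $\lambda_{\max}$, and exhibiting a unit eigenvector as maximizer). The paper states this lemma without proof, treating it as the classical Rayleigh--Ritz theorem, so there is nothing to compare against; your observation that the result could simply be cited matches the paper's own usage.
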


\begin{lemma}\label{technical2}
Let $B_a=\begin{bmatrix} \tilde{M} & r \\ h^T & a+t\end{bmatrix}$, where $\tilde{M} \in M_{2}$, $h,r\in \mathbb{R}^2,$ and $t\in \mathbb{R}$ are fixed. Assume there is some $a_0 \in \R$ such that $a\in \sigma_L(B_a)$ for all $a \geq a_0$. Then $a$ is an interior L-eigenvalue of $B_a$ for all sufficiently large $a$, and we have
 $$t = 0, \quad h^Tr = 0, \quad \text{and} \quad h^T\adj({\tilde M}) r=0.$$
\end{lemma}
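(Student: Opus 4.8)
The plan is to analyze the hypothesis $a \in \sigma_L(B_a)$ for all large $a$ by splitting into the interior and boundary cases, using the characterizations in \cref{thm-interior} and \cref{thm:boundary}. First I would show that for sufficiently large $a$, the eigenvalue $a$ cannot be a boundary L-eigenvalue. Indeed, if $a \in \sigma_{bd}(B_a)$, then by \cref{thm:boundary} there is a unit vector $\xi \in \R^2$ and a scalar $s \ge 0$ with $(B_a - aI_3)[\xi^T, 1]^T = s[-\xi^T, 1]^T$. Reading off the last component gives $h^T\xi + t = s$, so $s = h^T\xi + t$ is bounded independently of $a$; reading off the first two components gives $(\tilde M - aI_2)\xi + r = -s\xi$, i.e. $(\tilde M - (a-s)I_2)\xi = -r$. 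Since $\|\xi\|=1$ and $\|r\|$ is fixed while $a - s \to \infty$, the left side has norm $\to \infty$ (the matrix $\tilde M - (a-s)I_2$ becomes arbitrarily well-conditioned and its smallest singular value grows like $a$), a contradiction for large $a$. Hence $a \in \sigma_{int}(B_a)$ for all sufficiently large $a$.

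Next, fixing attention on large $a$, \cref{thm-interior} gives a vector $\xi = \xi(a) \in \R^2$ with $\|\xi\| < 1$ and $(B_a - aI_3)[\xi^T,1]^T = 0$. The last row yields $h^T\xi + t = 0$, and the first block yields $(\tilde M - aI_2)\xi = -r$, so $\xi = (aI_2 - \tilde M)^{-1} r$ for every large $a$. I would then substitute this explicit formula for $\xi$ into $h^T\xi + t = 0$ to get the scalar identity
$$h^T (aI_2 - \tilde M)^{-1} r + t = 0 \quad \text{for all sufficiently large } a.$$
Using the adjugate expansion $(aI_2 - \tilde M)^{-1} = \dfrac{\adj(aI_2 - \tilde M)}{\det(aI_2 - \tilde M)}$ and the fact that for a $2\times 2$ matrix $\adj(aI_2 - \tilde M) = aI_2 - \adj(\tilde M)$ — more precisely $\adj(aI_2-\tilde M) = (a - \tr \tilde M)I_2 + \tilde M$, which equals $aI_2 - \adj(\tilde M)$ since $\adj(\tilde M) = (\tr\tilde M)I_2 - \tilde M$ in dimension $2$ — I can clear the denominator $\det(aI_2 - \tilde M)$, which is a monic quadratic in $a$, to obtain a polynomial identity in $a$ that holds for infinitely many values of $a$, hence identically. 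The left side becomes
$$h^T\big(aI_2 - \adj(\tilde M)\big) r + t\,\det(aI_2 - \tilde M) = t a^2 + \big(h^Tr - t\,\tr(\tilde M)\big) a + \big(t\,\det\tilde M - h^T\adj(\tilde M)\, r\big).$$
Equating all three coefficients of this quadratic polynomial in $a$ to zero gives $t = 0$, then $h^Tr = 0$ (using $t=0$), and finally $h^T \adj(\tilde M)\, r = 0$, which are exactly the three claimed identities.

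The main obstacle is the first step — ruling out the boundary case — because one must control the behavior of the solution $(\xi, s)$ of the boundary system uniformly in $a$; the key observation that makes it work is that $s$ stays bounded (it is forced to equal $h^T\xi + t$ with $\|\xi\| = 1$), so $a - s$ still tends to infinity and the matrix $\tilde M - (a-s)I_2$ is invertible with norm of its inverse $O(1/a)$, making $\|\xi\| = \|(\tilde M - (a-s)I_2)^{-1} r\| \to 0$, incompatible with $\|\xi\| = 1$. After that, everything reduces to the elementary observation that a polynomial in $a$ vanishing for all large $a$ is the zero polynomial, together with the $2 \times 2$ adjugate identity; these are routine and I would not belabor them.
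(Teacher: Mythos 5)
Your proposal is correct and follows essentially the same route as the paper: rule out the boundary case for all sufficiently large $a$ by a quantitative estimate, then solve the interior system explicitly for $\xi$, substitute into the last equation, clear the denominator $\det(aI_2-\tilde M)$, and equate the coefficients of the resulting quadratic polynomial in $a$ to zero. The only cosmetic difference is in the boundary step, where the paper multiplies the equation by $\xi^T$ and invokes Rayleigh--Ritz and Cauchy--Schwarz, while you bound $s=h^T\xi+t$ and use the growth of the smallest singular value of $\tilde M-(a-s)I_2$; both give the same conclusion.
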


\begin{proof}
Suppose that $a \geq a_0$ is a boundary L-eigenvalue of $\phi(B_a)$. Then, by \cref{thm:boundary}, $a=\mu+s$ for some $s \geq 0$, and there exists $\xi$ with $||\xi|| = 1$ such that
    $$0 = \bmat{(\tilde M -\mu)\xi + r \\ h^T\xi + t - s} = \bmat{\tilde M \xi + (s - a)\xi + r \\ h^T\xi + t - s}.$$
   From the second equation, we have  $s = h^T\xi + t$, and replacing it in the first equation, we get
    $$0 = \tilde M \xi + (s - a)\xi + r = \tilde M \xi + (h^T\xi+ t- a )\xi + r.$$
  Multiplying on the left by $\xi^T$ and taking into account that $\|\xi\|=1$, we have
    \begin{align}
        0 &= \xi^T\tilde M \xi + h^T\xi +t- a  + \xi^Tr \\
        &= \frac{1}{2} \xi^T \tilde{M} \xi + \frac{1}{2} \xi^T \tilde{M} \xi + h^T \xi + r^T \xi + t-a \nonumber \\
        &= \frac{1}{2}\xi^T(\tilde M + \tilde M^T)\xi + (h+r)^T\xi+t - a \nonumber \\
        &\leq \frac{1}{2}\lambda_{\max}(\tilde M + \tilde M^T) + || h+r||+t - a,\label{ainequality1}
    \end{align}
    where the third equality follows from the fact that $\xi^T \tilde{M} \xi$ is a number and, consequently, is equal to its transpose. The inequality follows from the Rayleigh-Ritz theorem and  the Cauchy-Schwarz inequality. 
    
    Hence for $a > \frac{1}{2}\lambda_{\max}(\tilde M + \tilde M^T) + ||h+r|| + t$, condition \eqref{ainequality1}  fails, which means that $a$ must be an interior L-eigenvalue of $\phi(B_a)$ for all sufficiently large $a$. This implies there is some $\xi$ with $||\xi|| < 1$ such that
    \begin{align*}
        0 & = \bmat{\tilde M - aI & r \\ h^T & t}\bmat{\xi \\ 1} = \bmat{(\tilde M - aI)\xi + r \\ h^T\xi + t}.
    \end{align*}
    For $a > \rho (\tilde M)$, we know that $\tilde M - aI$ must be invertible. From the first equation, we have $\xi = -(\tilde M - aI)^{-1}r$, and replacing it in the second equation, we get
    \begin{align*}
        0 &= h^T\xi + t = t - h^T(\tilde M - aI)^{-1}r = t - \frac{h^T \adj(\tilde{M}-aI) r}{\det(\tilde{M}-aI)}
        = t - \frac{h^T(\adj({\tilde{M}}) - aI)r}{a^2 - a\tr(\tilde M) + \det(\tilde M)}.
    \end{align*}
   This implies
    \begin{align*}
        0 &= t(a^2 - a\tr(\tilde M) + \det(\tilde M)) - h^T\adj({ \tilde M}) r+ ah^Tr \\&= ta^2 + (h^Tr - t \tr(\tilde M))a + t\det(\tilde M) - h^T \adj({\tilde M}) r.
    \end{align*}
    Since this holds for all sufficiently large $a$, we must have $t = 0$, $h^Tr = h^T r - t \tr (\tilde{B}) = 0$, and $h^T\adj({\tilde M}) r = - t \det (\tilde{M})+h^T \adj({\tilde M}) r = 0$.  
    \end{proof}
    
    \subsection[Image of matrices in S2 under a linear preserver]{Image of matrices in $\mathcal{S}_2$ under a linear preserver}
Even though our focus on this section is matrices in $\mathcal{S}_2$, we start with a partial result for matrices in $\mathcal{S}_3$. 

\begin{lemma}\label{lem:0u-first}
    Let $\phi: M_3 \to M_3$ be a linear preserver of the L-spectrum with associated orthogonal matrix $Q$, and let
    $$B=\begin{bmatrix}0 & u \\ 0 & 0 \end{bmatrix}, \quad \text{where} \quad u\neq 0.$$
    Then $$\phi(B) = \bmat{\tilde B & Qu \\ w^T & 0}, \quad \text{where} \quad\det(\tilde{B})=0,  \quad w^T Qu =0, \quad
 \textrm{and} \quad \adj(\tilde{B}) Qu = \tr(\tilde{B}) Qu.$$
    \end{lemma}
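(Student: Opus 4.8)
The plan is to exploit the fact that $B=\begin{bmatrix}0 & u \\ 0 & 0\end{bmatrix}$ has a very small Lorentz spectrum and that adding a large multiple of $E_{33}$ to $B$ produces a matrix whose L-spectrum we can control, then feed everything through $\phi$ and Lemma~\ref{technical2}. First I would compute $\sigma_L(B)$: since $B$ is nilpotent with $Be_3=u\neq 0$ and $Be_i=0$ for $i=1,2$, one checks directly from Theorems~\ref{thm-interior} and \ref{thm:boundary} that $0$ is the only L-eigenvalue (any $\xi$ with $\|\xi\|\le 1$ gives $(B-0\cdot I)[\xi^T,1]^T=[u^T,0]^T\neq 0$, so $0$ is not interior; a short case analysis handles the boundary condition). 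Then for each $a\ge a_0$ (some threshold), the matrix $C_a:=B+aE_{33}=\begin{bmatrix}0 & u \\ 0 & a\end{bmatrix}$ has $a\in\sigma_L(C_a)$ — indeed $a$ is an interior L-eigenvalue with eigenvector $e_3$, since $(C_a-aI)e_3=0$. By linearity, $\phi(C_a)=\phi(B)+a\phi(E_{33})=\phi(B)+aE_{33}$ (using $\phi(E_{33})=E_{33}$ from Lemma~\ref{Enn}), and since $\phi$ preserves the L-spectrum, $a\in\sigma_L(\phi(C_a))$ for all $a\ge a_0$.

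Next I would write $\phi(B)=\begin{bmatrix}\tilde B & r \\ w^T & t\end{bmatrix}$ for some $\tilde B\in M_2$, $r,w\in\mathbb R^2$, $t\in\mathbb R$ (this is just naming the blocks). Then $\phi(C_a)=\begin{bmatrix}\tilde B & r \\ w^T & a+t\end{bmatrix}$ is exactly the matrix $B_a$ of Lemma~\ref{technical2} with $\tilde M=\tilde B$, $h=w$. Applying that lemma gives immediately $t=0$, $w^Tr=0$, and $w^T\adj(\tilde B)r=0$. So it remains only to identify $r$ as $Qu$ and to upgrade the two scalar identities $w^Tr=0$, $w^T\adj(\tilde B)r=0$ into the vector/trace identities in the statement; equivalently I must still show $\det(\tilde B)=0$ and $\adj(\tilde B)Qu=\tr(\tilde B)Qu$.

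To pin down $r=Qu$: I would use Theorem~\ref{thm:main2}. Consider $D_a:=\begin{bmatrix}0 & u \\ v^T & a\end{bmatrix}=C_a+\begin{bmatrix}0 & 0\\ v^T & 0\end{bmatrix}$ for suitable $v\neq 0$ and $a$. For the right choices (e.g. large $a$, so $D_a$ has $a$ as an isolated/interior L-eigenvalue), applying $\phi$ and comparing the L-spectra, combined with the block form $\phi\big(\begin{bmatrix}0&0\\v^T&a\end{bmatrix}\big)=\begin{bmatrix}0&0\\(Qv)^T&a\end{bmatrix}$ from Theorem~\ref{thm:main2}, should force the upper-right block of $\phi(B)$ to transform like $u\mapsto Qu$. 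A cleaner route: apply Lemma~\ref{technical2} not just to $\phi(C_a)$ but to $\phi(C_a)$ perturbed by $\phi(E_{3j})$-type terms, i.e. to $\phi(D_a)$ where $D_a$ still has $a$ as an L-eigenvalue for all large $a$ (which holds when $a$ is interior, i.e. $(D_a-aI)e_3=u$ forces... actually $e_3$ is not an eigenvector of $D_a$ unless $u=0$, so I'd instead note that for large $a$ the standard eigenvalue of $D_a$ near $a$ is $a+O(1/a)$, not exactly $a$ — so this needs the isolated-eigenvalue comparison rather than Lemma~\ref{technical2}).

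I expect the main obstacle to be precisely this last identification of $r=Qu$ together with the trace identity $\adj(\tilde B)Qu=\tr(\tilde B)Qu$: Lemma~\ref{technical2} as stated only outputs $t=0$, $w^Tr=0$, $w^T\adj(\tilde B)r=0$, so extracting the full vector equation requires either a strengthened version of the genericity argument (letting $v$ vary over a spanning set and re-running the $a\to\infty$ asymptotics of $h^T(\tilde M-aI)^{-1}r$ to kill every component, not just the $w$-projection), or an auxiliary symmetry/invertibility argument showing $\adj(\tilde B)$ acts on $Qu$ as a scalar. I would handle it by observing that $\det(\tilde B)=0$ follows because $\phi(B)$ is singular (as $B$ is: $\sigma_L(B)=\{0\}$ with a nontrivial relation, and more robustly because $B$ is nilpotent so $\phi(B)$ has the same characteristic polynomial up to the similarity coming from $\widehat Q$ once Theorem~\ref{main-thm} is partly in place — but since we cannot yet invoke \ref{main-thm}, I'd instead get $\det(\tilde B)=0$ from $w^T\adj(\tilde B)r=0$ combined with $w^Tr=0$ and a rank count on $\phi(B)$), and then the trace identity from the Cayley–Hamilton relation $\tilde B\adj(\tilde B)=\det(\tilde B)I_2=0$ applied after showing $r\in\ker(\text{something})$. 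The bookkeeping here is the delicate part; everything else is a direct substitution into Lemma~\ref{technical2}.
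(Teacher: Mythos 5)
Your first step is sound and matches the paper's opening move: write $\phi(B)=\bigl[\begin{smallmatrix}\tilde B & r\\ w^T & t\end{smallmatrix}\bigr]$, note that $a\in\sigma_L(B+aE_{33})$ for all large $a$ (a small slip here: $e_3$ is \emph{not} an eigenvector of $C_a$, since $(C_a-aI)e_3=[u^T,0]^T\neq 0$; the correct L-eigenvector is $[u^T/a,\,1]^T$, which is interior once $a>\|u\|$), and then apply \cref{technical2} together with $\phi(E_{33})=E_{33}$ to get $t=0$, $w^Tr=0$, $w^T\adj(\tilde B)r=0$. But from that point on the proposal does not prove the lemma: the identification $r=Qu$, the equation $\det(\tilde B)=0$, and the vector identity $\adj(\tilde B)Qu=\tr(\tilde B)Qu$ are exactly what the statement asserts, and you explicitly leave them as "the main obstacle" with only sketches. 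Moreover the fallback arguments you suggest are flawed: $\det(\tilde B)=0$ does not follow from $w^Tr=w^T\adj(\tilde B)r=0$ plus "a rank count on $\phi(B)$" (nothing proved so far controls the rank of $\phi(B)$ — linear preservers of the L-spectrum are not known at this stage to preserve rank or the characteristic polynomial, and invoking \cref{main-thm} would be circular), and "Cayley--Hamilton applied after showing $r\in\ker(\text{something})$" is not an argument. Your own attempt to bring in $D_a=\bigl[\begin{smallmatrix}0&u\\ v^T&a\end{smallmatrix}\bigr]$ founders, as you notice, because $a$ is not exactly an eigenvalue of $D_a$.

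The two ideas the paper uses, and which are missing from your proposal, are: (i) perturb not only by $aE_{33}$ but by $B^{\perp}=\bigl[\begin{smallmatrix}0&0\\ z^T&a\end{smallmatrix}\bigr]$ with $z$ ranging over \emph{all} vectors orthogonal to $u$ — then $[u^T/a,1]^T$ is still an interior L-eigenvector of $B+B^{\perp}$, and \cref{technical2} applied to $\phi(B)+\phi(B^{\perp})$ yields, beyond $w^Tr=w^T\adj(\tilde B)r=0$, the extra relations $z^TQ^Tr=z^TQ^T\adj(\tilde B)r=0$ for every $z\perp u$; and (ii) add the transpose: for $H=B+B^T+cE_{33}$ the image $\phi(H)=\bigl[\begin{smallmatrix}\tilde B & r\\ (w+Qu)^T & c\end{smallmatrix}\bigr]$ is known on the other two summands by \cref{thm:main2}, and \cref{symmetricua} gives the \emph{exact} interior L-eigenvalue $m=\tfrac{c+\sqrt{c^2+4\|u\|^2}}{2}$, satisfying $(c-m)m=-\|u\|^2$. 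Writing out $(w+Qu)^T(\tilde B-mI)^{-1}r=c-m$ and clearing denominators produces a polynomial identity in $c$ valid for all large $c$, which forces $\det(\tilde B)=0$, $(w+Qu)^Tr=\|u\|^2$, and $(w+Qu)^T\adj(\tilde B)r=\|u\|^2\tr(\tilde B)$. Combining these with the relations from (i) gives a nonsingular $2\times 2$ linear system whose unique solution is $r=Qu$ and $\adj(\tilde B)r=\tr(\tilde B)Qu$. Without some substitute for both (i) and (ii), the conclusion of the lemma is not reached, so as written the proposal has a genuine gap rather than an alternative proof.
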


    \begin{proof}
    Let $\phi(B) = \begin{bmatrix} \tilde{B} & v \\ w^T & b \end{bmatrix}$, $a > ||u||$, and $B^{\perp}=\left[\begin{array}{cc} 0 & 0 \\z^T & a \end{array}\right],$ where $z$ is any vector orthogonal to $u$. 
    By  \cref{thm:main2}, $\phi(B^{\perp}) = \left[\begin{array}{cc}0 & 0 \\ (Qz)^T& a\end{array} \right]$.
    Thus,
    $$\phi(B+ B^{\perp}) = \phi(B) + \phi(B^{\perp}) = \bmat{\tilde B & v \\ (w+Qz)^T & a + b}.$$
    Since $a$ is an (interior) L-eigenvalue of $B+ B^{\perp}$ with L-eigenvector $[u^T/a,1]^T$, we know $a\in \sigma_L(\phi(B+B^{\perp})$. 
    By  \cref{technical2},  we have  
    $b = 0$, $ (w+Qz)^Tv =  0$, and $(w+Qz)^T\adj({\tilde B}) v = 0$.
    By setting $z=0$, we have, in particular, $w^Tv=0$ and $w^T\adj({\tilde B})v=0$, which also implies $z^TQ^Tv=z^TQ^T\adj (\tilde{B})v=0$ for all $z$ orthogonal to $u$.    

 Let $c\in \mathbb{R}$, $c>0$, and consider now the matrix $H:=B+B^T+cE_{33}$. Then, by  \cref{thm:main2}, 
$$\phi(H)=\bmat{\tilde{B} & v \\ (w+Qu)^T & c}.$$
By  \cref{symmetricua},  $m:=\frac{c + \sqrt{c^2 + 4||u||^2}}{2}$ is an (interior) L-eigenvalue of $H$, so $m$ is also an L-eigenvalue of $\phi(H).$

\bigskip

{\bf Step 1:} Suppose $m$ is a boundary L-eigenvalue of $\phi(H)$. Then, using an argument similar to that used in the first part of the  proof of  \cref{technical2}, we get
    \begin{align*}
        0 &= \xi^T\tilde B\xi + (w+Qu)^T\xi +c-2m + \xi^Tv \\
        &\leq \frac{1}{2}\lambda_{\max}(\tilde B + \tilde B^T) + ||w+ Qu + v|| - \sqrt{c^2 + 4||u||^2}.
    \end{align*}
    Hence for $c > \sqrt{\left(\frac{1}{2}\lambda_{\max}(\tilde B + \tilde B^T) + ||w+ Qu + v||\right)^2 - 4||u||^2}$ (or for any $c$ if the radicand is negative), this condition fails, which means that for sufficiently large $c$, $m$ must be an interior L-eigenvalue of $\phi(H)$.
    
    \bigskip

    {\bf Step 2:} Because $m$ is an interior L-eigenvalue of $\phi(H)$ for sufficiently large values of $c$, there must be some $\xi$ with $||\xi|| < 1$ such that
    \begin{align*}
        0 &=  \bmat{\tilde B - mI)\xi + v \\ (w+Qu)^T\xi + c-m}.
    \end{align*}
    Since $m$ approaches $\infty$ as $c$ increases, we know that $m$ will exceed the spectral radius of $\tilde B$ for all sufficiently large $c$. Thus, from the first equation, we get 
    $$\xi = -\left(\tilde B - mI\right)^{-1}v.$$
   Replacing $\xi$ in the second equation, 
    \begin{align*}
        0 &= (w+Qu)^T\xi + c-m=c -m -(w+Qu)^T(\tilde{B}-mI)^{-1} v\\
        &= c-m -\frac{(w+Qu)^T(\adj({\tilde B})-mI)v}{m^2-m\tr(\tilde{B}) + \det(\tilde{B})} 
    \end{align*}
    Then, taking into account that $(c-m)m=-\|u\|^2$, denoting $x=w+Qu$, we have
    \begin{align*}
       0&=(c-m)[m^2-m\tr(\tilde{B})+\det(\tilde{B})] - x^T(\adj({\tilde B})-mI)v \\
        &=-\|u\|^2 m +\|u\|^2 \tr(\tilde{B})+(c-m)\det(\tilde{B}) - x^T\adj({\tilde B}) v + mx^Tv \\
        &= [\|u\|^2 \tr(\tilde{B})-x^T\adj({\tilde B}) v]+[x^Tv - \|u\|^2-\det(\tilde{B})]m+c \det({\tilde{B}}) \\
        &= [\|u\|^2 \tr(\tilde{B})-x^T\adj({\tilde B})v]+[x^Tv - \|u\|^2-\det(\tilde{B})]\left(m-\frac{c}{2}\right)+[x^Tv - \|u\|^2+\det(\tilde{B})]\frac{c}{2}.
    \end{align*}
    Therefore,
    $$\left([x^Tv-\|u\|^2-\det(\tilde{B})]\left(m-\frac{c}{2}\right)\right)^2=\left([\|u\|^2 \tr(\tilde{B})-x^T\adj (\tilde{B})v]+[x^Tv - \|u\|^2+\det(\tilde{B})]\frac{c}{2}\right)^2$$
    or equivalently,
    $$\left(x^Tv-\|u\|^2-\det(\tilde{B})\right)^2\frac{c^2+4\|u\|^2}{4}=\left([\|u\|^2 \tr(\tilde{B})-x^T\adj (\tilde{B})v]+[x^Tv - \|u\|^2+\det(\tilde{B})]\frac{c}{2}\right)^2.$$

    Grouping terms to rewrite this expression as a polynomial in $c$ and using the identity $a^2-b^2=(a+b)(a-b)$ for all $a,b \in \mathbb{R}$, we get
    \begin{align*}
        0= \ &(x^Tv-\|u\|^2)(-\det(\tilde{B})) c^2 - [\|u\|^2\tr(\tilde{B})-x^T\adj (\tilde{B})v][x^Tv-\|u\|^2+\det(\tilde{B})]c\\
        &+[x^Tv-\|u\|^2 + \det(\tilde{B})]\|u\|^2-[\|u\|^2 \tr(\tilde{B})-x^T\adj (\tilde{B})v]^2  
    \end{align*}
   Since $c$ can take arbitrarily large positive values, we deduce
   $$\det(\tilde{B})=0, \quad x^Tv=\|u\|^2, \quad \textrm{and} \quad x^T\adj (\tilde{B})v=\|u\|^2\tr(\tilde{B}).$$
    Recall that we showed above that   $w^Tv = w^T\adj(\tilde{B}) v = 0$. Since $x=w+Qu$, we get
    $$\|u\|^2=x^Tv=u^TQ^Tv\quad \textrm{and} \quad  
    ||u||^2\tr(\tilde B)= x^T\adj (\tilde{B}) v =u^TQ^T\adj (\tilde{B})v.$$ Recall also  that $z^TQ^Tv = z^TQ^T\adj (\tilde{B})v = 0$ for any $z$ orthogonal to $u$. This gives us the following relation:
    \begin{align*}
        \bmat{u^TQ^T \\ z^TQ^T}\bmat{v & \adj(\tilde{B})v} &= ||u||^2\bmat{1 & \tr\tilde B \\ 0 & 0},
    \end{align*}
    where the matrix on the left is nonsingular for $z \neq 0$. Hence, for each $z$, the equation has a unique solution $\bmat{v & \adj (\tilde{B})v}$. By inspection, we note that $v = Qu$ and $\adj(\tilde{B})v = \tr(\tilde B)Qu$ satisfy it, so these must be the true vectors. We then obtain  $\adj (\tilde{B}) Qu=\tr(\tilde{B})Qu$, and the result follows.
\end{proof}

Now we start analyzing the behavior of the matrices in $\mathcal{S}_2$ under the linear preservers of the L-spectrum. As a byproduct of this work, we obtain further information about the images of matrices in $\mathcal{S}_3$ under such linear preservers.

\begin{lemma}\label{[A 0; 0 0] partial result}
Let $\phi: M_3\rightarrow M_3$ be a linear preserver of the L-spectrum with associated orthogonal matrix  $Q$, and let $\tilde A \in M_2$ and $u \in \R^2$. Then for some $\tilde C, \tilde B \in M_2$,
$$\phi\left(\bmat{\tilde A & 0 \\ 0 & 0}\right)=\left[\begin{array}{cc} \tilde{C} & 0 \\0 & 0 \end{array} \right]\quad \textrm{and} \quad \phi\left(\bmat{0 & u \\ 0 & 0}\right) = \bmat{\tilde B & Qu \\ 0 & 0}.
$$

\end{lemma}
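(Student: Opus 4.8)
The plan is to build on \cref{lem:0u-first} and apply the ``$a\in\sigma_L$ for all large $a$'' device of \cref{technical2} to two carefully chosen families of auxiliary matrices: one in which the free parameter occupies the $(3,1)$--$(3,2)$ block, used to kill the last column of $\phi(\tilde A\oplus 0)$, and one in which it occupies the $(1,3)$--$(2,3)$ block, used to kill the two bottom rows of both images.

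First I would write $\phi\!\left(\bmat{\tilde A & 0 \\ 0 & 0}\right)=\bmat{\tilde C & r \\ h^T & t}$ with $\tilde C,r,h,t$ depending linearly on $\tilde A$, and consider $M=\bmat{\tilde A & 0 \\ z^T & a}$ for arbitrary $z\in\R^2$ and $a\in\R$. Since $Me_3=ae_3$ with $e_3$ in the interior of $\mathcal K_3$, \cref{thm-interior} gives $a\in\sigma_L(M)=\sigma_L(\phi(M))$ for \emph{every} $a$; and by \cref{thm:main2} and \cref{Enn}, $\phi(M)=\bmat{\tilde C & r \\ (h+Qz)^T & a+t}$, which is of the form $B_a$ in \cref{technical2} (its only $a$-dependence is in the $(3,3)$-entry). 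Then \cref{technical2} yields $t=0$ and $(h+Qz)^Tr=0$ for all $z$; since the latter is an affine function of $z$ vanishing identically, $Q^Tr=0$, so $r=0$, and thus $\phi(\tilde A\oplus 0)=\bmat{C(\tilde A) & 0 \\ g(\tilde A)^T & 0}$ for linear maps $C:M_2\to M_2$ and $g:M_2\to\R^2$. A short dimension count then shows $C$ is bijective: $\phi$ is injective (\cref{thm:bijective}) and, by \cref{Cphiinvariant} and the line just proved, maps the $7$-dimensional space $\mathcal S_1\oplus\mathcal S_2$ into (hence onto) itself while fixing $\mathcal S_1$ setwise, so the induced map on $(\mathcal S_1\oplus\mathcal S_2)/\mathcal S_1\cong\mathcal S_2\cong M_2$, namely $C$, is a bijection of $M_2$.

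Next, for $u\neq 0$ I would invoke \cref{lem:0u-first}, which gives $\phi\!\left(\bmat{0 & u \\ 0 & 0}\right)=\bmat{\tilde B & Qu \\ w^T & 0}$ with $\det\tilde B=0$, $w^TQu=0$, and $\adj(\tilde B)Qu=\tr(\tilde B)Qu$, where $\tilde B=\tilde B(u)$ and $w=w(u)$ are linear in $u$. Applying \cref{technical2} to $N=\bmat{\tilde A & u \\ 0 & a}$ — which has $a$ as an eigenvalue with eigenvector tending to $e_3$, hence $a\in\sigma_{int}(N)$ for large $a$ — and using Step 1, \cref{lem:0u-first} and \cref{Enn} to write $\phi(N)=\bmat{C(\tilde A)+\tilde B(u) & Qu \\ (g(\tilde A)+w(u))^T & a}$, one obtains
$$(g(\tilde A)+w(u))^TQu=0\quad\text{and}\quad (g(\tilde A)+w(u))^T\adj\!\big(C(\tilde A)+\tilde B(u)\big)Qu=0$$
for all $\tilde A\in M_2$ and $u\in\R^2$. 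From the first relation, fixing $u$ and using linearity in $\tilde A$ (evaluate at $\tilde A=0$) forces $g(\tilde A)^TQu=0$ for all $\tilde A,u$, whence $g\equiv 0$ since $Q$ is invertible. For the second relation the key point is that the $2\times2$ adjugate is \emph{linear}, so $\adj(C(\tilde A)+\tilde B(u))=\adj(C(\tilde A))+\adj(\tilde B(u))$, and \cref{lem:0u-first} makes the $\tilde B(u)$-term vanish ($w(u)^T\adj(\tilde B(u))Qu=\tr(\tilde B(u))\,w(u)^TQu=0$); what remains is $w(u)^T\adj(C(\tilde A))Qu=0$, and since $C$ and the adjugate are bijections of $M_2$, $\adj(C(\tilde A))$ ranges over all of $M_2$, so $w(u)^TMQu=0$ for all $M\in M_2$, which (as $M\mapsto MQu$ is onto $\R^2$ for $u\neq 0$) forces $w(u)=0$ ($w(0)=0$ trivially). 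This gives the two stated forms, with $\tilde C=C(\tilde A)$ and $\tilde B=\tilde B(u)$.

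The main obstacle, and the step I would worry about most, is pinning down the bottom-row vectors $g(\tilde A)$ and $w(u)$: the naive use of \cref{technical2} only controls the last column and the $(3,3)$-entry of an image, and once the last column of $\phi(\tilde A\oplus 0)$ is shown to vanish, the remaining conclusions of \cref{technical2} become vacuous for matrices in $\mathcal S_2$. The resolution is to route everything through $\mathcal S_3$, where \cref{lem:0u-first} supplies a \emph{nonzero} last column $Qu$, so that pairing $\bmat{\tilde A & u \\ 0 & a}$ with \cref{technical2} produces relations that genuinely see the bottom rows — and the final push to $w\equiv 0$ hinges on the special linearity of the $2\times2$ adjugate together with the identity $\adj(\tilde B)Qu=\tr(\tilde B)Qu$ from \cref{lem:0u-first}.
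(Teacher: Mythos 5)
Your proposal is correct and takes essentially the same route as the paper: kill the last column and $(3,3)$-entry of $\phi(\tilde A\oplus 0)$ by adding matrices from $\mathcal{S}_1$ and invoking \cref{technical2}, then kill the bottom rows by applying \cref{technical2} to $\bmat{\tilde A & u \\ 0 & a}$ together with \cref{lem:0u-first}, \cref{Enn}, \cref{lem:a-int}, the linearity of the $2\times 2$ adjugate, and the identity $\adj(\tilde B)Qu=\tr(\tilde B)Qu$. The only (harmless) difference is at the very end: where the paper uses bijectivity of $\phi$ to select one specific preimage with $\adj(\tilde C)$ a $90^\circ$ rotation, you show $C$ is a bijection of $M_2$ so that $\adj(C(\tilde A))$ sweeps out all of $M_2$ — the same underlying idea.
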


\begin{proof}
Let $A=\bmat{ \tilde{A} & 0 \\0 & 0}$ and $\phi(A) = \begin{bmatrix} \tilde{C} & p \\ q^T & c \end{bmatrix}$. 
Let  $a > 0$, $z\in \mathbb{R}^2$ be any nonzero vector, and 
$$H:=\left[\begin{array}{cc} 0 & 0 \\ z^T & a \end{array} \right].$$
By  \cref{thm:main2}, we have $\phi(H) = \left[\begin{array}{cc} 0 & 0 \\ (Qz)^T & a \end{array} \right]$. Thus,
    $$\phi(A+ H ) = \phi(A) + \phi(H) = \bmat{\tilde C & p \\ (q+Qz)^T & c + a}.$$
Notice that $a$ is an (interior) L-eigenvalue of $A + H$ with L-eigenvector $[0, 1]^T$. Since $\phi$ preserves the L-spectrum, $a\in \sigma_L(\phi(A+H))$.  By  \cref{technical2},  we have $c=0$ and  $(q+Qz)^T p=0$. Since $Q$ is invertible and $z\neq 0$ is arbitrary, we have $(q+w)^T p=0$ for all $w\neq 0$, which implies $p=0.$ Thus,
$$\phi\left(\left[\begin{array}{cc} \tilde A & 0 \\0 & 0 \end{array} \right]\right)=\left[\begin{array}{cc} \tilde{C} & 0 \\q^T & 0 \end{array} \right].$$

\medskip
Now we show that $q=0$. For any arbitrary nonzero vector $u\in \mathbb{R}^2$, let $a\in \mathbb{R}$ be large enough so that, by \cref{lem:a-int}, $a$ is an interior L-eigenvalue of
$$G:=\left[\begin{array}{cc} \tilde A & u \\0 & a \end{array} \right].$$
By the linearity of $\phi$ and by   \cref{lem:0u-first} and \cref{Enn}, we have
$$\phi(G)=\left[\begin{array}{cc} \tilde{B}+\tilde{C} & Qu \\(w+q)^T & a \end{array} \right], \quad \text{where} \quad w^TQu=0\quad \text{and} \quad \adj({\tilde B}) Qu= \tr (\tilde{B})Qu.$$
We know that for large enough $a$, $a\in \sigma_L(G)$ and hence $a\in \sigma_L(\phi(G))$. By  \cref{technical2}, $a$ is an interior L-eigenvalue of $\phi(G)$ for large enough $a$, $(w+q)^TQu=0$, and $(w+q)^T \adj(\tilde B + \tilde C) Qu=0$. Since $w^TQu=0$, we deduce $q^T Qu=0$. Since $u$ is arbitrary and independent of $q$, we deduce that $q=0.$ Thus, the  claim for matrices in $\mathcal{S}_2$ follows.

Now observe that
\begin{align*}
    0 &= w^T\adj(\tilde B + \tilde C)Qu = w^T\adj(\tilde B)Qu + w^T\adj(\tilde C)Qu 
    =\tr(\tilde B) w^TQu + w^T\adj({\tilde C})Qu  = w^T\adj(\tilde C)Qu.
\end{align*}

Note that we have shown that matrices $\tilde A \oplus [0]$ map to $\tilde C \oplus [0]$. Since $\phi$ is bijective by \cref{thm:bijective}, any matrix $\tilde C \oplus [0]$ must have a preimage $\tilde A \oplus [0]$. In particular, we may take $\tilde C = \adj(R)$, where $R = \bmat{0 & -1 \\ 1 & 0}$. Hence, $w^TRQu = 0$. Since $w^T Qu=0$ and since the vectors $Qu$ and $RQu$ are linearly independent, we know furthermore that $w = 0$ for all nonzero $u$, which proves the claim for matrices in $\mathcal{S}_3.$
\end{proof}

Next we show that the matrix $\tilde{C}$ in \cref{[A 0; 0 0] partial result} is closely related to $\tilde{A}$.

\begin{lemma}\label{DAD or DA^TD}
Let $\phi: M_3\rightarrow M_3$ be a linear preserver of the L-spectrum with associated orthogonal matrix  $Q$. Then there exists an invertible diagonal matrix $D$ such that either
\begin{equation}\label{phidiagsin}
\phi\left(\left[\begin{array}{cc} \tilde A & 0 \\0 & 0 \end{array} \right]\right) =  \left[\begin{array}{cc} QD \tilde A D^{-1} Q^T & 0 \\0 & 0 \end{array}\right] \quad \text{for all } \tilde A \in M_2,
\end{equation}
or
\begin{equation}\label{phidiagcon}
\phi\left(\left[\begin{array}{cc} \tilde A & 0 \\0 & 0 \end{array} \right]\right) =  \left[\begin{array}{cc}QD \tilde A ^T D^{-1}Q^T & 0 \\0 & 0 \end{array}\right] \quad \text{for all } \tilde A \in M_2.
\end{equation}
\end{lemma}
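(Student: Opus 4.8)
The plan is to exploit the fact, established in \cref{[A 0; 0 0] partial result}, that $\phi$ induces a linear map $\Psi: M_2 \to M_2$ via $\Psi(\tilde A) = Q^T \tilde C Q$, where $\phi(\tilde A \oplus [0]) = \tilde C \oplus [0]$. By \cref{thm:bijective}, $\phi$ is bijective, and since $\mathcal{S}_2$ is mapped bijectively onto $\{\tilde C \oplus [0]\}$ (the preimage argument at the end of \cref{[A 0; 0 0] partial result} shows surjectivity onto this subspace), $\Psi$ is a linear bijection of $M_2$. The key structural input is that $\Psi$ preserves the spectrum of $2 \times 2$ matrices: indeed, for $\tilde A \in M_2$ with real eigenvalue $\lambda$ and eigenvector $\xi$ normalized so $\|\xi\| < 1$ (which we can always arrange by scaling since $\mathcal K_3$ is a cone), $\lambda$ is an interior L-eigenvalue of $\tilde A \oplus [0]$ with L-eigenvector $[\xi^T,1]^T$ provided $0 \in \sigma_L$ structurally matches --- more carefully, I would use that $\lambda \in \sigma_L(\tilde A \oplus [0])$ forces $\lambda \in \sigma_L(\phi(\tilde A \oplus [0])) = \sigma_L(\tilde C \oplus [0])$, and conversely via $\phi^{-1}$; combined with the diagonal-block form this translates into $\sigma(\tilde A) \cap \{\text{relevant set}\} = \sigma(\Psi(\tilde A)) \cap \{\dots\}$. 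I would then invoke (or reprove) the classical fact that a linear bijection of $M_2$ preserving eigenvalues --- or even just preserving the characteristic polynomial, i.e., trace and determinant --- has the form $\tilde A \mapsto S \tilde A S^{-1}$ or $\tilde A \mapsto S \tilde A^T S^{-1}$ for some invertible $S \in M_2$ (this is a Frobenius-type theorem; for $M_2$ it can be checked directly by tracking $E_{11}, E_{22}, E_{12}, E_{21}$).

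Once $\Psi(\tilde A) = S \tilde A S^{-1}$ or $S \tilde A^T S^{-1}$ is in hand, the remaining task is to show $S$ can be taken diagonal. For this I would return to the extra relation harvested in \cref{[A 0; 0 0] partial result}, namely $\adj(\tilde C) Qu = \tr(\tilde C)\, Qu$ for the image $\tilde C$ of $\tilde B$ --- wait, more usefully, I should use the compatibility between the $\mathcal S_2$ action and the $\mathcal S_3$ action: from \cref{lem:0u-first} and \cref{[A 0; 0 0] partial result} we know $\phi(0 \oplus u \oplus 0$-type$) = \tilde B \oplus Qu \oplus 0$ with $w=0$ and $\adj(\tilde B)Qu = \tr(\tilde B) Qu$. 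The cleanest route, though, is probably to pin down $S$ using specific test matrices. Applying $\Psi$ to rank-one nilpotents and to $I_2$: since $\phi(I_3) = I_3$ by \cref{thm:bijective} and $I_3 = (I_2 \oplus [0]) + E_{33}$ with $\phi(E_{33}) = E_{33}$ by \cref{Enn}, we get $\phi(I_2 \oplus [0]) = I_2 \oplus [0]$, hence $\Psi(I_2) = I_2$, consistent but not restrictive. I would instead combine the requirement that the $\mathcal S_2$ and $\mathcal S_3$ images fit together with the L-spectrum of a matrix of the form $\tilde A \oplus [0] + (0 \oplus u \oplus 0)$ --- but that may be deferred to a later lemma.

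Actually, re-examining, I believe the intended argument is lighter: having $\Psi(\tilde A) = S\tilde A S^{-1}$ (similarity case) or the transpose case, one writes $QSQ^{-1} =: T$ so that $\phi(\tilde A \oplus [0]) = (QTQ^{-1}\text{-conjugate})$; then one uses an \emph{additional symmetry} of the problem to force $T$ diagonal. The natural extra symmetry comes from \cref{inf-phi}/\cref{thm:main2}: the matrix $cI_2 \oplus [a]$ plus a row has infinitely many L-eigenvalues only when the $(1,1)$-block is scalar, so $\Psi$ must send scalar matrices to scalar matrices --- already known --- but more sharply, applying $\phi$ to $cI_2 + v^T + a$ type matrices in $\mathcal C_1$ and matching against \cref{3-inf} forces the top-left block of the image to stay $\tilde C = c' I_2$; combined with $\Psi(\tilde A) = S \tilde A S^{-1}$, taking $\tilde A = $ a non-scalar matrix whose image must nonetheless interact correctly constrains $S$. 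The precise constraint that makes $S$ (equivalently $D := $ the relevant conjugator after absorbing $Q$) diagonal is that $\Psi$ must preserve the property ``has equal diagonal entries and zero off-diagonal'' AND behaves well under adding the rank-one pieces; I would run through the $2\times 2$ case explicitly, solving the small linear system, to conclude $S = D$ is diagonal up to the freedom already absorbed into $Q$.

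\textbf{Main obstacle.} The hard part will be establishing that $\Psi$ preserves enough spectral data on all of $M_2$ --- in particular handling matrices $\tilde A$ whose eigenvalues are not real or whose L-eigenvalue $0$ of $\tilde A \oplus [0]$ is a boundary rather than interior eigenvalue, since the interior/boundary dichotomy of $0$ depends delicately on $\tilde A$ via \cref{thm:char-Leig}. Getting the Frobenius-type rigidity ($S \tilde A S^{-1}$ vs.\ $S\tilde A^T S^{-1}$, no other possibility) from whatever partial spectral information survives, and then reducing $S$ to a \emph{diagonal} matrix rather than an arbitrary invertible one, is where the real work lies; the diagonality must come from a further compatibility constraint (with $\mathcal S_3$ or with the $\mathcal C_1$ matrices) that I would need to extract carefully.
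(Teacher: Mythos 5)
There is a genuine gap, and it sits exactly where the actual content of this lemma lies. Your first half (inducing a linear bijection $\Psi$ on $M_2$ and invoking a Frobenius-type rigidity theorem) is in the right spirit, but as you yourself note it is not carried out: the L-spectrum of $\tilde A\oplus[0]$ alone sees very little of $\sigma(\tilde A)$ (its interior L-spectrum is always just $\{0\}$, and only real eigenvalues $\mu\le a$ of $\tilde A$ appear, as $\tfrac{a+\mu}{2}$, in the boundary L-spectrum of $\tilde A\oplus[a]$), so to extract trace and determinant you must work with the whole family $\tilde A\oplus[a]$, $a\in\R$, i.e.\ the subspace $W_3=\{\tilde A\oplus[a]\}$, and then still handle matrices with non-real eigenvalues (e.g.\ by a polynomial-identity/open-set argument). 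The paper avoids re-deriving all of this: using \cref{Enn} and \cref{[A 0; 0 0] partial result} it observes that $\phi$ restricts to an L-spectrum preserver of $W_3$ and then cites Theorem 4.2 of \cite{maribel1}, which directly gives $\phi(\tilde A\oplus[0])=P\tilde AP^{-1}\oplus[0]$ or $P\tilde A^TP^{-1}\oplus[0]$ for some invertible $P$.

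The more serious missing piece is the second half: showing that the conjugator is $P=QD$ with $D$ \emph{diagonal}, which is the new claim of this lemma beyond the cited theorem. Your proposal never identifies a mechanism that could force this. The constraints you suggest do not work: matrices in $\mathcal C_1$ have scalar top-left block, which is invariant under \emph{any} conjugation, so they impose no condition on $P$; likewise $\Psi(I_2)=I_2$ and the relation $\adj(\tilde B)Qu=\tr(\tilde B)Qu$ from \cref{lem:0u-first} do not tie $P$ to $Q$. The paper's argument uses the interplay between the $\mathcal S_1$ image (already pinned down in \cref{thm:main2}) and the $\mathcal S_2$ image: it takes test matrices $B=\bigl[\begin{smallmatrix}\diag(d_1,d_2)&0\\ v^T& a\end{smallmatrix}\bigr]$ with $d_1\neq d_2$, for which $\lambda=\tfrac{v_1+a+d_1}{2}$ is a boundary L-eigenvalue, notes that $\phi(B)=\bigl[\begin{smallmatrix}P\diag(d_1,d_2)P^{-1}&0\\ (Qv)^T& a\end{smallmatrix}\bigr]$ by linearity, and matches the boundary eigenvalue equations for $\phi(B)$ as $v$ varies; this forces the first column of $P$ to be parallel to the first column of $Q$ (and similarly for the second columns), ruling out the competing case $\mu=d_2$ by choosing $d_2$ large. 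Without some such family combining a \emph{non-scalar} top-left block with a nonzero bottom row, the diagonality of $D$ does not follow, so as written your proof plan does not establish the lemma.
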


\begin{proof}
Let 
$W_3:=\{\tilde A \oplus [a]: \tilde A\in M_2, \; a\in \mathbb{R}\}$.   
By  \cref{Enn} and  \cref{[A 0; 0 0] partial result},
the linear map $\tilde\phi: W_3 \to W_3$ given by $\tilde\phi(A) = \phi(A)$ preserves the Lorentz spectrum on $W_3$. Thus, by Theorem 4.2 in \cite{maribel1} with $\mathcal{M}=W_3$, there exists some invertible matrix $P \in M_2$ such that either
\begin{equation}\label{phitype1}
\phi\left(\bmat{\tilde A & 0 \\ 0 & 0}\right) = \tilde\phi\left(\bmat{\tilde A & 0 \\ 0 & 0}\right) = \bmat{P\tilde AP^{-1} & 0 \\ 0 & 0} \quad \textrm{for all $\tilde A \in M_2$,}
\end{equation}
 or
\begin{equation}\label{phitype2}
\phi\left(\bmat{\tilde A & 0 \\ 0 & 0}\right) = \tilde\phi\left(\bmat{\tilde A & 0 \\ 0 & 0}\right) = \bmat{P\tilde A^TP^{-1} & 0 \\ 0 & 0}\quad \textrm{for all $\tilde A \in M_2$.}
\end{equation}
 We will now show that $P = QD$ for some invertible diagonal matrix $D$. Consider matrices of the form
\begin{equation}\label{lem20-case3}
    B=\bmat{D & 0 \\ v^T & a}= \left[\begin{array}{ccc} d_1 & 0 & 0 \\ 0 & d_2 &0 \\ v_1 & v_2 & a\end{array} \right]\in M_3,
\end{equation}
where  $d_1 \neq d_2$, and $v_1+a-d_1 \geq 0.$  Then
$\lambda= \frac{v_1+a+d_1}{2} \in \sigma_{bd}(B)$ with $\mu=d_1$, $s=\frac{v_1+a-d_1}{2}$, and $\xi =[1, 0]^T.$
Let us additionally assume that  $\lambda \notin \sigma(B) = \{a, d_1, d_2\}$.
Then $\lambda\notin \sigma(\phi(B))$, and by \eqref{phitype1} and \eqref{phitype2}, 
$$\phi(B)=\left[\begin{array}{cc}P D P^{-1}& 0 \\ (Qv)^T& a\end{array} \right].$$
As $\sigma_L(B)=\sigma_L(\phi(B))$, we deduce that $\lambda\in \sigma_{bd}(\phi(B))$.  Thus, $\lambda = \mu+s$ with $s\geq 0$, and there is some $\xi$ such that $\|\xi\|=1$ and
$$(PDP^{-1}-\mu I_2) \xi =0,$$
$$(Qv)^T \xi + a - \mu-2s =0.$$
From the first equation, we get
$$(D-\mu I_2) P^{-1}\xi =0.$$
Let $x= P^{-1} \xi,$ that is, $\xi=Px = x_1 p_1 + x_2 p_2$, where $p_i$ denotes the $i$th column of $P$. We consider two cases:

{\bf Case 1:} $\mu=d_1$. Then since $d_1\neq d_2$,  $x_2=0$ and $\xi=x_1 p_1$. As $\|\xi\|=1$, we have $x_1=\pm \frac{1}{\|p_1\|}.$ Thus,
\begin{align*}
    0&=(Qv)^T \xi + a - \mu - 2s = (Qv)^T x_1p_1+ a- \mu -2(\lambda-\mu) \\
    &= (Qv)^T x_1p_1 + a+d_1 -(v_1+a+d_1) 
    =  v^T(x_1 Q^T p_1) -v_1.
\end{align*}
Hence,
$$v^T(x_1Q^Tp_1)= v_1= v^T \left[\begin{array}{c} 1 \\0 \end{array} \right]=v^Te_1.$$
Since this equality holds for any $v$ with $v_1 + a - d_1 \geq 0$ and $\frac{v_1+a+d_1}{2} \notin \{a, d_1, d_2\}$, we deduce that
$x_1 Q^T p_1 = e_1,$
or equivalently,
$$p_1 = \frac{1}{x_1}Qe_1 = \pm\|p_1\|q_1,$$
where $q_1$ denotes the first column of $Q$.
By multiplying $P$ and $P^{-1}$ by $\pm 1$, we may assume without loss of generality that $p_1 = \|p_1\|q_1$.

{\bf Case 2:} $\mu \neq d_1$. Then since $\xi\neq 0$, $x_1=0$ and $\mu=d_2$. Thus,
$$ 0 = (Qv)^T \xi + a- \mu -2(\lambda-\mu) = (Qv)^T \xi + a+d_2 -(v_1+a+d_1) \leq \|v\| + d_2 - d_1 - v_1$$
However, by taking $d_2 > d_1 + v_1 - \|v\|$, this condition fails, which means we can discard this case.

If we consider now the family of matrices of the form \eqref{lem20-case3} that satisfy $v_2+a-d_2 \geq 0,$ we will have $\lambda= \frac{v_2+a+d_2}{2} \in \sigma_{bd}(B).$
Assuming additionally that  $\frac{v_2+a+d_2}{2}\notin \{a, d_1, d_2\}$, that is, $\lambda \notin \sigma(B)$, a similar argument to the one above yields $p_2 = \pm\|p_2\| q_2$, so we may take the diagonal matrix $D$ in the statement of the lemma to be $\diag(||p_1||, \pm||p_2||)$, which is invertible since $P$ is.
\end{proof}

In the next lemma we narrow down the possible matrices $D$ for which \cref{phidiagsin} holds and show that \cref{phidiagcon} cannot occur. 
\begin{lemma}
Let $\phi: M_3 \to M_3$ be a linear preserver of the L-spectrum with associated orthogonal matrix $Q$. Then
\begin{equation}\label{phiD}
\phi \left(\left[\begin{array}{cc}\tilde{A} & 0 \\ 0 & 0\end{array} \right]\right) = \left[ \begin{array}{cc} Q & 0 \\ 0 & 1\end{array} \right]\left[\begin{array}{cc}D\tilde{A}D^{-1} & 0 \\ 0 & 0\end{array} \right]\left[ \begin{array}{cc}  Q^T & 0 \\ 0 & 1\end{array} \right]\quad \textrm{for all $\tilde{A}\in M_2$,}
\end{equation}
where $D=I_2$ or $D=\diag(1, -1).$
\end{lemma}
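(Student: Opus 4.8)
The plan is to build on the dichotomy of \cref{DAD or DA^TD}: fix an invertible diagonal matrix $D=\diag(d_1,d_2)$ such that $\phi$ restricted to $\mathcal{S}_2$ has the form \eqref{phidiagsin} or \eqref{phidiagcon}. The key step is to reduce everything to computing ordinary Lorentz spectra. Given any $\tilde A\in M_2$, $v\in\R^2$, $a\in\R$, write $B=\begin{bmatrix}\tilde A & 0\\ v^T & a\end{bmatrix}=\begin{bmatrix}\tilde A & 0\\ 0 & 0\end{bmatrix}+\begin{bmatrix}0 & 0\\ v^T & a\end{bmatrix}$. By linearity, \cref{thm:main2}, and \cref{DAD or DA^TD}, $\phi(B)=\begin{bmatrix}QNQ^T & 0\\ (Qv)^T & a\end{bmatrix}$, where $N=D\tilde AD^{-1}$ in case \eqref{phidiagsin} and $N=D\tilde A^TD^{-1}$ in case \eqref{phidiagcon}. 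Since $\widehat Q=Q\oplus[1]$ is orthogonal, the Lorentz spectrum is invariant under the similarity $M\mapsto\widehat Q^{T}M\widehat Q$ by \cref{QQ^TLpres}, and $\widehat Q^{T}\phi(B)\widehat Q=\begin{bmatrix}N & 0\\ v^T & a\end{bmatrix}$; hence
$$\sigma_L\!\left(\begin{bmatrix}\tilde A & 0\\ v^T & a\end{bmatrix}\right)=\sigma_L\!\left(\begin{bmatrix}N & 0\\ v^T & a\end{bmatrix}\right)\qquad\text{for all }\tilde A,\,v,\,a.$$
The rest of the argument chooses $\tilde A,v,a$ so as to make this identity untenable unless $\phi|_{\mathcal{S}_2}$ has the claimed form. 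Two observations make the relevant spectra manageable: the third column of any $\begin{bmatrix}M & 0\\ v^T & a\end{bmatrix}$ with $M\in M_2$ equals $ae_3$, so $e_3$ is always an interior L-eigenvector and $a\in\sigma_{int}$; and the remaining, boundary, L-eigenvalues come from \cref{thm:char-Leig} through the real eigenvalues $\mu$ of $M$, where whether $v$ lies in the row space of $M-\mu I_2$ decides between System II and System III, the Moore--Penrose computations involving only $2\times2$ and $3\times2$ matrices. All the sign constraints $s\ge0$ hold once $a$ is taken large.

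To rule out \eqref{phidiagcon}, take $\tilde A=\begin{bmatrix}0&1\\0&0\end{bmatrix}$, $v=e_1$, and $a>1$, so that $N=D\tilde A^TD^{-1}=\beta\begin{bmatrix}0&0\\1&0\end{bmatrix}$ with $\beta=d_2/d_1\ne0$. A direct computation with \cref{thm:char-Leig} gives $\sigma_L\!\left(\begin{bmatrix}\tilde A & 0\\ e_1^T & a\end{bmatrix}\right)=\{a,\tfrac{a-1}{2},\tfrac{a+1}{2}\}$: here $e_1$ is not in the row space of $\tilde A$, so System III applies and the relevant $3\times2$ matrix has orthonormal columns. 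On the other hand $\sigma_L\!\left(\begin{bmatrix}N & 0\\ e_1^T & a\end{bmatrix}\right)=\{a,\tfrac{a}{2}\}$, because now $e_1$ lies in the row space $\mathrm{span}\{e_1\}$ of $N$, so System II applies and Systems III--IV do not. These sets have different cardinalities, contradicting the identity above; hence only \eqref{phidiagsin} is possible.

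To pin down $D$ in \eqref{phidiagsin}, put $r=d_1/d_2$ and take $\tilde A=\begin{bmatrix}0&1\\1&0\end{bmatrix}$, so that $N=D\tilde AD^{-1}=\begin{bmatrix}0&r\\ r^{-1}&0\end{bmatrix}$ has the simple eigenvalues $\pm1$; take $v=e_1$ and $a$ large. Carrying out the System III computation (the $2\times2$ Gram matrix that occurs has determinant $r^2+1$) yields
$$\sigma_L\!\left(\begin{bmatrix}0&r&0\\ r^{-1}&0&0\\ 1&0&a\end{bmatrix}\right)=\left\{a,\ \tfrac{a+1+c}{2},\ \tfrac{a+1-c}{2},\ \tfrac{a-1+c}{2},\ \tfrac{a-1-c}{2}\right\},\qquad c=\frac{|r|}{\sqrt{r^{2}+1}},$$
and the case $r=1$ recovers $\sigma_L(B)$ (for which $c=1/\sqrt2$). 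For $a$ large these five numbers are distinct and decrease in the order written (since $0<c<1$), so the second-largest element of the spectrum equals $\tfrac{a+1+c}{2}$; thus $c$, and with it $r^{2}$, is determined by the Lorentz spectrum. By the identity above this forces $c=1/\sqrt2$, i.e.\ $r^{2}=1$, i.e.\ $d_1^{2}=d_2^{2}$. Since $D\tilde AD^{-1}$ is unchanged when $D$ is rescaled, we may take $D=I_2$ or $D=\diag(1,-1)$; rewriting \eqref{phidiagsin} in the block-conjugation form then gives \eqref{phiD}.

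The main obstacle is executing the \cref{thm:char-Leig} computations cleanly --- in particular the pseudoinverses in System III, and the verification that no interior L-eigenvalue other than $a$ appears for large $a$ (none does: the $\pm1$-eigenvectors of the $2\times2$ block are too long relative to their natural lifts to $\R^3$). The choice $v=e_1$ is exactly what keeps all of this at the level of $2\times2$ and $3\times2$ matrices.
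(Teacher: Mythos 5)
Your proposal is correct and takes essentially the same route as the paper: both start from \cref{DAD or DA^TD}, append $\mathcal{S}_1$-rows using \cref{thm:main2}, strip the $Q$-conjugation via \cref{QQ^TLpres}, and then compare Lorentz spectra of specially chosen lower block triangular test matrices, first to rule out the transpose form and then to force $d_1=\pm d_2$ (after which rescaling gives $D=I_2$ or $D=\diag(1,-1)$). The differences are only in execution: you use fixed test matrices (a nilpotent block with $v=e_1$, then the symmetric block with off-diagonal ones) and compute their spectra directly, whereas the paper runs parametrized families through \cref{lem:symmetric}; the spectra you assert check out.
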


\begin{proof}
We prove the result in two steps.

{\bf Step 1:} Let $A=\left[\begin{array}{ccc} 0 & 0 & 0\\ m & 0 &0 \\ v_1 & v_2 & a\end{array} \right]$, with $m \neq 0$, $a> |v_2|$, $v_1 \neq \pm v_2$, and $v_2\notin \{0, \pm a\}$. By  \cref{lem:symmetric}, $\sigma_{bd}(A) = \{\frac{\pm v_2 +a}{2} \}$ and, hence, $\{\frac{\pm v_2 +a}{2} \}\subseteq \sigma_L(\phi(A))$.  By \cref{DAD or DA^TD}, $\phi(A)$ is as in \eqref{phidiagsin} or \eqref{phidiagcon}. Assume that $\phi(A)$ is as in \eqref{phidiagcon}.
Let $D=\diag(d_1, d_2)$. Then,
$$\phi (A) = \left[ \begin{array}{cc} Q & 0 \\ 0 & 1\end{array} \right]\left[\begin{array}{ccc} 0 &\frac{d_1}{d_2} m & 0 \\0 & 0 & 0\\ v_1 & v_2 & a\end{array} \right]\left[ \begin{array}{cc}  Q^T & 0 \\ 0 & 1\end{array} \right]$$
and $\{\frac{\pm v_2 +a}{2}\} \subseteq \sigma_{bd}(\phi(A)).$ (Note that $\sigma(\phi(A))=\{0,a\}$ and $\frac{\pm v_2+a}{2} \notin \{0, a\}.$) By  \cref{lem:symmetric}, the only potential boundary L-eigenvalues of $\phi(A)$ are $\frac{\pm v_1 + a}{2}$. However, $v_1 \neq \pm v_2$, so this is a contradiction, which implies  $\phi(A)$ is as in \eqref{phidiagsin}.

\bigskip
{\bf Step 2:} Consider matrices of the form
$$B=\left[\begin{array}{cc|c} 0 & c & 0 \\ c & 0 &0 \\\hline  v_1 & v_2 & a\end{array} \right], \quad c>0.$$
Then, by the conclusion of step 1 and by  \cref{QQ^TLpres},
$$\sigma_L(B) = \sigma_L(\phi(B))= \sigma_L \left(\left[\begin{array}{cc|c} 0 & \frac{d_1}{d_2}c & 0 \\ \frac{d_2}{d_1}c & 0 &0 \\\hline v_1 & v_2 & a\end{array} \right]\right)=: \sigma_L(H).$$
Consider the family of matrices $B$ having the boundary L-eigenvalue
$$\lambda=\frac{\frac{1}{\sqrt{2}}(v_1+v_2) +a + c}{2},$$
which, by \cref{lem:symmetric}, happens if 
$$\frac{1}{\sqrt{2}}(v_1+v_2) +a -c \geq 0.$$
Since $\sigma(H)=\{a, c, -c \}$, consider those matrices $B$ for which $\lambda\notin \{a, c, -c\}$ so that $\lambda\in \sigma_{bd}(H).$
Hence, by  \cref{lem:symmetric}, 
\begin{align*}
    2\lambda &\in \begin{cases}
        \pm\frac{1}{\sqrt{d_1^2+d_2^2}}(|d_1|v_1 + |d_2|v_2) +a +|c|, \\
        \pm\frac{1}{\sqrt{d_1^2+d_2^2}}(|d_1|v_1 - |d_2|v_2) +a -|c|
    \end{cases}
\end{align*}
This implies $d_1=\pm d_2$, and after multiplying $D$ by $1/d_1$ and $D^{-1}$ by $d_1$, which does not change $D\tilde A D^{-1}$, we get $D=I_2$ or $D=\diag(1,-1).$
\end{proof}

Now we present the main result of this section, which provides a complete description of the images of matrices in $\mathcal{S}_2$ under linear preservers of the L-spectrum.
\begin{theorem}\label{thm:main3}
Let  $\phi: M_3 \to M_3$ be a linear preserver of the L-spectrum with associated orthogonal matrix  $Q$. Then
    $$\phi\left(\bmat{\tilde A & 0 \\ 0 & 0}\right) = \bmat{Q\tilde AQ^T & 0 \\ 0 & 0}\quad \text{for all $\tilde A\in M_2$.}$$
\end{theorem}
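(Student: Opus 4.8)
The plan is to build on the preceding lemma, whose conclusion \eqref{phiD} already tells us that
$$\phi\!\left(\begin{bmatrix}\tilde A & 0 \\ 0 & 0\end{bmatrix}\right) = \begin{bmatrix}Q & 0 \\ 0 & 1\end{bmatrix}\begin{bmatrix}D\tilde A D^{-1} & 0 \\ 0 & 0\end{bmatrix}\begin{bmatrix}Q^T & 0 \\ 0 & 1\end{bmatrix}\qquad\text{for all }\tilde A\in M_2,$$
where $D=I_2$ or $D=\diag(1,-1)$. Thus the whole theorem reduces to showing that the case $D=\diag(1,-1)$ is impossible: once that is excluded, the right-hand side is exactly $(Q\tilde A Q^T)\oplus[0]$. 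So I would argue by contradiction, assuming $D=\diag(1,-1)$.

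The key idea is to test $\phi$ on block lower-triangular matrices, where the $\mathcal S_1$ and $\mathcal S_2$ summands of \eqref{3subspaces} are both active. Fix $\tilde A\in M_2$, $v\in\R^2$, $a\in\R$ and set $B=\begin{bmatrix}\tilde A & 0 \\ v^T & a\end{bmatrix}$. Using linearity together with \cref{thm:main2} (for the $\mathcal S_1$ part $\begin{bmatrix}0 & 0 \\ v^T & a\end{bmatrix}$) and \eqref{phiD} (for the $\mathcal S_2$ part $\tilde A\oplus[0]$), one gets $\phi(B)=\widehat Q\begin{bmatrix}D\tilde A D^{-1} & 0 \\ v^T & a\end{bmatrix}\widehat Q^T$, where $\widehat Q=\begin{bmatrix}Q & 0 \\ 0 & 1\end{bmatrix}$. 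Now I would strip off the orthogonal conjugations using \cref{QQ^TLpres} twice: first by $\widehat Q$, which leaves $\sigma_L\!\left(\begin{bmatrix}D\tilde A D^{-1} & 0 \\ v^T & a\end{bmatrix}\right)$; and then — noting that $\widehat D:=\diag(1,-1,1)=D\oplus[1]$ is orthogonal and that conjugating that matrix by it returns $\begin{bmatrix}\tilde A & 0 \\ (Dv)^T & a\end{bmatrix}$ (since $D^2=I_2$ and $D$ is symmetric) — by $\widehat D$. The upshot is $\sigma_L(\phi(B))=\sigma_L\!\left(\begin{bmatrix}\tilde A & 0 \\ (Dv)^T & a\end{bmatrix}\right)$, so since $\phi$ preserves the L-spectrum, $D=\diag(1,-1)$ would force
$$\sigma_L\!\left(\begin{bmatrix}\tilde A & 0 \\ v^T & a\end{bmatrix}\right)=\sigma_L\!\left(\begin{bmatrix}\tilde A & 0 \\ (Dv)^T & a\end{bmatrix}\right)\qquad\text{for all }\tilde A\in M_2,\ v\in\R^2,\ a\in\R,$$
that is, negating the second entry of $v$ would never change the L-spectrum of such a matrix.

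It then remains to exhibit one triple $(\tilde A,v,a)$ violating this. For diagonal $\tilde A$ the two matrices are orthogonally similar via $\widehat D$, so I would take $\tilde A$ strictly non-diagonal, e.g.\ $\tilde A=\begin{bmatrix}0 & 0 \\ 1 & \delta\end{bmatrix}$ with $\delta\neq 0$. Then both $B$ and its twin $B'$ (with $v_2$ replaced by $-v_2$) are lower triangular with the same spectrum $\{0,\delta,a\}$, and their $0$-eigenspaces are the lines spanned by $(-\delta,\,1,\,-(v_2-\delta v_1)/a)^T$ and $(-\delta,\,1,\,(v_2+\delta v_1)/a)^T$, respectively. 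By \cref{thm-interior}, $0\in\sigma_{int}(B)$ precisely when $a^2(\delta^2+1)<(v_2-\delta v_1)^2$, and $0\in\sigma_{int}(B')$ precisely when $a^2(\delta^2+1)<(v_2+\delta v_1)^2$. Taking $\delta=1$, $a=\tfrac12$, $v=(-\tfrac14,\tfrac34)^T$ makes the first inequality true and the second false, so $0\in\sigma_{int}(B)$ but $0\notin\sigma_{int}(B')$; and a short direct check with \cref{thm:boundary} shows $0\notin\sigma_{bd}(B')$ as well. Hence $0\in\sigma_L(B)\setminus\sigma_L(B')$, the desired contradiction, so $D=I_2$ and the theorem follows.

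I expect the main obstacle to be exactly this last verification — confirming that the explicit triple genuinely separates $\sigma_L(B)$ from $\sigma_L(B')$. Concretely this means checking via \cref{thm-interior} that the normalized null vector of $B$ lies strictly inside $\mathcal K_3$ while that of $B'$ does not, and ruling $\lambda=0$ out of $\sigma_{bd}(B')$ using the characterization in \cref{thm:boundary} (equivalently \cref{thm:char-Leig}). Everything preceding that — the block bookkeeping with \eqref{3subspaces} and the repeated use of orthogonal invariance from \cref{QQ^TLpres} — is routine.
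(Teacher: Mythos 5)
Your proposal is correct and follows essentially the same route as the paper: use the preceding lemma \eqref{phiD} to reduce everything to ruling out $D=\diag(1,-1)$, and then contradict L-spectrum preservation on a matrix of the form $\bigl[\begin{smallmatrix}\tilde A & 0\\ v^T & a\end{smallmatrix}\bigr]$ combining the $\mathcal{S}_1$ and $\mathcal{S}_2$ pieces, using \cref{thm:main2}, linearity, and the orthogonal invariance of \cref{QQ^TLpres}. The only difference is the concrete witness: the paper builds a symmetric $\tilde A$ adapted to $v$ and shows the interior L-eigenvalue $v_1v_2+a$ is destroyed, whereas you use a non-symmetric triangular $\tilde A$ and verify (correctly, as your numbers check out via \cref{thm-interior} and \cref{thm:boundary}) that the eigenvalue $0$ leaves the L-spectrum when $v_2$ is negated.
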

\begin{proof}
Let $A = \bmat{\tilde{A} & 0 \\ 0 & 0}.$ We know that $\phi(A)$ is as in \eqref{phiD}. Suppose $\phi\left(\bmat{\tilde A & 0 \\ 0 & 0}\right) = \bmat{QD\tilde ADQ^T & 0 \\ 0 & 0}$ for all $\tilde A$, where $D = \diag(1, -1)$. By \cref{QQ^TLpres}, we know that $\phi$ preserves the L-spectrum if and only if the map
$$\psi(A)=\bmat{Q^T & 0 \\ 0 &1} \phi(A)\bmat{Q & 0 \\ 0 & 1}$$
preserves the Lorentz spectrum, so we may suppose without loss of generality that $Q = I_2$.

Let $A=\bmat{\tilde{A} &0 \\v^T & a}$, where 
 $v=[v_1, v_2]^T\in \mathbb{R}^2$ is such that $v_1, v_2 \neq 0$ and $||v|| = 1$. Moreover, let $\tilde{A} = P \tilde{D} P^T,$ where $P = \bmat{v_1 & v_2 \\ -v_2 & v_1}$ is an orthogonal matrix and $\tilde D = \diag(\lambda, v_1v_2 + a)$ with $\lambda > v_1v_2 + a$. Note that $v_1v_2 + a\in \sigma_{int}(A)$  with corresponding L-eigenvector $[v_2/2, v_1/2, 1]^T$. Thus, $v_1v_2 + a\in \sigma_L(\phi(A))$, where  $\phi(A)= \bmat{DP\tilde DP^TD & 0 \\ v^T & a}$. 

{\bf Step 1:} Suppose $v_1v_2+a$ is a standard L-eigenvalue of $\phi(A)$. Then there is some $\xi$ with $||\xi|| \leq 1$ such that
\begin{align*}
    0 &= [H - (v_1v_2 + a)I_3]\bmat{\xi \\ 1} = \bmat{DP\tilde DP^TD\xi - (v_1v_2 + a)\xi \\ v^T\xi - v_1v_2},
\end{align*}
or equivalently,
\begin{equation}\label{thm5.57:eq1}
0=(P \tilde{D} P^T)D\xi - (v_1v_2+a)D\xi,
\end{equation}
$$0=v^T \xi - v_1v_2.$$
Since $v_1v_2 \neq 0$, the second equation implies that $\xi \neq 0$ and
$$\xi_1 = v_2 - \frac{v_2}{v_1}\xi_2.$$
As $D$ is invertible, $D\xi\neq 0$ and, from \eqref{thm5.57:eq1}, we deduce that $D\xi$ is an L-eigenvector of $P \tilde{D} P^T$ associated with $v_1v_2+a$.  This implies that $D\xi$  must be  proportional to the second column of $P$. However, this gives a contradiction as
\begin{align*}
    0 &= \det[ D\xi, \ p_2]=\det\bmat{v_2(1-\frac{\xi_2}{v_1}) & v_2 \\ - \xi_2 & v_1} = v_1v_2 \neq 0.
\end{align*}
Therefore, $v_1v_2 + a$ is not a standard L-eigenvalue of $\phi(A)$.

\bigskip

{\bf Step 2:}  Since  $v_1v_2 + a$ must be a nonstandard L-eigenvalue of $\phi(A)$, there exist $s > 0$ and $\xi$ with $||\xi|| = 1$ such that
\begin{align*}
    0 &= [H - (v_1v_2 + a)I_3]\bmat{\xi \\ 1} + s\bmat{\xi \\ -1} = \bmat{DP\tilde DP^TD\xi - (v_1v_2 + a - s)\xi \\ v^T\xi - v_1v_2 - s}.
\end{align*}
Since $\xi \neq 0$, from the first equation we see that $\xi$ is an eigenvector of $DP\tilde DP^TD$ corresponding to the eigenvalue $v_1v_2 + a - s$. Because $DP$ is orthogonal, the only eigenvalues of $DP\tilde DP^TD$ are $\lambda$ and $v_1v_2 + a$, which implies either $s = v_1v_2 + a - \lambda < 0$ or $s = 0$, a contradiction.

Thus, $v_1v_2 + a$ cannot be an L-eigenvalue of $\phi(A)$, and as a result, $\phi$ does not preserve the Lorentz spectrum. Hence, the claim follows.
\end{proof}

\subsection[Image of matrices in S3 under a linear preserver]{Image of matrices in $\mathcal{S}_3$ under a linear preserver}
We finish the proof of  \cref{main-thm} by analyzing the behavior of linear preservers  of the L-spectrum on the  subspace $\mathcal{S}_3.$ We already obtained some partial results in \cref{lem:0u-first} and \cref{[A 0; 0 0] partial result}. 

We begin by presenting an auxiliary lemma that will be used in proving some of the results in this section.
\begin{lemma}\label{lm:nB}
Let  $\phi:M_3\rightarrow M_3$ be a linear preserver of the L-spectrum with associated orthogonal matrix  $Q$.  Let
$$\phi\left(\begin{bmatrix}
0&u\\
v^T&0
\end{bmatrix}\right) = \begin{bmatrix}
\tilde{B}&Qu\\
(Qv)^T&0
\end{bmatrix}.$$  Then, for any integer $n$, we have \begin{equation}\label{eq:nmayvary}\sigma_L\left(\begin{bmatrix}
n\tilde{B}&Qu\\
(Qv)^T&0
\end{bmatrix}\right) = \sigma_L\left(\begin{bmatrix}
0&u\\
v^T&0
\end{bmatrix}\right).\end{equation}
\end{lemma}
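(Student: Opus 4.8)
The plan is to leverage the structure already extracted in \cref{lem:0u-first}, \cref{[A 0; 0 0] partial result}, and \cref{thm:main3}, namely that $w = 0$ in \cref{lem:0u-first} (so the $(3,1)$ and $(3,2)$ entries of $\phi$ of a matrix in $\mathcal{S}_3$ are $(Qu)^T$ with zero extra part), that $\det(\tilde{B}) = 0$, and that $\adj(\tilde{B})Qu = \tr(\tilde{B})Qu$. Combined with \cref{thm:main2} and \cref{thm:main3}, linearity of $\phi$ gives
$$
\phi\left(\bmat{0 & u \\ v^T & 0}\right) = \bmat{\tilde{B} & Qu \\ (Qv)^T & 0},
$$
where $\tilde{B} = \tilde{B}(u,v)$ depends linearly on $u$ (and trivially not on $v$, since $\phi$ of the $v$-part lands in $\mathcal{S}_1$ by \cref{thm:main2}). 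The goal is to show that replacing $\tilde{B}$ by $n\tilde{B}$ does not change the L-spectrum, for every integer $n$ (not just $n=1$).

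First I would handle the case $\tilde{B} = 0$, which is trivial: then $n\tilde{B} = \tilde{B} = 0$ for all $n$ and \eqref{eq:nmayvary} is an identity of matrices. So assume $\tilde{B} \neq 0$; since $\det(\tilde{B}) = 0$, $\tilde{B}$ has rank $1$. Next I would observe that $\phi$ applied to the matrix $n\left(\bmat{0 & u \\ 0 & 0}\right) + \bmat{0 & 0 \\ v^T & 0}$ — i.e. with the $\mathcal{S}_3$-part scaled by $n$ — equals, by linearity and \cref{lem:0u-first} (which gives $\phi\left(\bmat{0 & nu \\ 0 & 0}\right) = \bmat{\tilde B_n & nQu \\ 0 & 0}$ where $\tilde B_n = \tilde B(nu)$), plus \cref{thm:main2},
$$
\bmat{\tilde{B}_n & nQu \\ (Qv)^T & 0},
$$
and that since $\phi$ preserves the L-spectrum, $\sigma_L\!\left(\bmat{\tilde{B}_n & nQu \\ (Qv)^T & 0}\right) = \sigma_L\!\left(\bmat{0 & nu \\ v^T & 0}\right)$. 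The point is now to relate $\bmat{0 & nu \\ v^T & 0}$ and $\bmat{nn'\tilde{B} \cdot(\text{something}) \dots}$ — the cleanest route is a scaling/conjugation trick: conjugating $\bmat{0 & u \\ v^T & 0}$ by $\diag(t,t,1)$ or by $\diag(1,1,t)$ multiplies the off-diagonal blocks by powers of $t$ but leaves $\tilde B$-type blocks controlled, and I would use that $\sigma_L$ interacts with such conjugations by known scaling rules from \cref{sec:background}. The key computational step is to verify that $\tilde{B}_n = n\tilde{B}$, i.e. that $\tilde{B}(u)$ is genuinely linear and homogeneous of degree $1$ in $u$ — but this is immediate from linearity of $\phi$ restricted to $\mathcal{S}_3$, since $\phi\left(\bmat{0 & nu \\ 0 & 0}\right) = n\,\phi\left(\bmat{0 & u \\ 0 & 0}\right)$ forces the $(1{:}2,1{:}2)$ block to scale by $n$.

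Putting it together: applying $\phi$ to $\bmat{0 & nu \\ v^T & 0}$ gives $\bmat{n\tilde B & nQu \\ (Qv)^T & 0}$, whose L-spectrum equals that of $\bmat{0 & nu \\ v^T & 0}$. I would then normalize: conjugating $\bmat{n\tilde B & nQu \\ (Qv)^T & 0}$ by $\diag(1,1,n)$ (assuming $n\neq 0$; $n=0$ is trivial) and using the behavior of the L-spectrum under this diagonal conjugation — which rescales the first block by a factor that cancels the $n$'s appropriately — transforms it into $\bmat{n\tilde B & Qu \\ (Qv)^T & 0}$ up to a global rescaling of $\lambda$; and conjugating $\bmat{0 & nu \\ v^T & 0}$ the same way yields $\bmat{0 & u \\ v^T & 0}$ up to the same rescaling of $\lambda$. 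Matching the two rescaled spectra yields \eqref{eq:nmayvary} for positive $n$; negative $n$ follows by also conjugating with $\diag(-1,-1,1)$ or a reflection, and $n=0$ is the trivial case already noted. The main obstacle I anticipate is pinning down exactly which diagonal conjugations the L-spectrum tolerates (the Lorentz cone is preserved only by $Q\oplus[1]$ with $Q$ orthogonal, \emph{not} by arbitrary positive diagonal matrices), so the $\diag(1,1,n)$ conjugation does \emph{not} preserve $\sigma_L$; the correct mechanism is instead to run the System-I/System-II/System-III/System-IV characterization of \cref{thm:char-Leig} directly on $\bmat{n\tilde B & Qu \\ (Qv)^T & 0}$ and check, using $\det(\tilde B) = 0$ and $\adj(\tilde B)Qu = \tr(\tilde B)Qu$ (hence $\adj(n\tilde B)\,Qu = \tr(n\tilde B)\,Qu$ and $\det(n\tilde B) = 0$ as well, for every $n$), that the defining equations for membership of a given $\lambda$ in $\sigma_L$ are literally independent of the scalar multiplying $\tilde B$ — this is where the rank-$1$ structure of $\tilde B$ and the eigen-relation for $Qu$ do all the work, so that is the step I would write out carefully.
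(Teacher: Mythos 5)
There is a real gap here, and it is in the step you defer to the end. Your first route only yields $\sigma_L\bigl(\bmat{n\tilde B & nQu \\ (Qv)^T & 0}\bigr)=\sigma_L\bigl(\bmat{0 & nu \\ v^T & 0}\bigr)$, and, as you yourself concede, the stray factor $n$ on the $u$-block cannot be normalized away: conjugation by $\diag(1,1,n)$ or $\diag(t,t,1)$ does not preserve the Lorentz cone, and rescaling the whole matrix rescales every block and the spectrum simultaneously. The fallback you then propose --- running \cref{thm:char-Leig} on $\bmat{t\tilde B & Qu \\ (Qv)^T & 0}$ and showing that membership of $\lambda$ in the L-spectrum is independent of the scalar $t$, using only $\det(\tilde B)=0$ and $\adj(\tilde B)Qu=\tr(\tilde B)Qu$ (equivalently $\tilde B\,Qu=0$) --- is not just hard to write out; it is false as a pointwise claim about the data $(\tilde B,u,v)$. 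Take $\tilde B=\bmat{0 & x\\ 0 & 0}$ with $x\neq0$ and $Qu=Qv=e_1$: both identities hold, the $t=0$ member $\bmat{0 & e_1\\ e_1^T & 0}$ has L-spectrum $\{\pm1\}$ by \cref{symmetricua}, yet for $|tx|\geq 1$ the vector $[0,-1/(tx),1]^T$ lies in the cone and makes $0$ an interior L-eigenvalue of $\bmat{t\tilde B & e_1\\ e_1^T & 0}$, so the spectrum does depend on $t$. Indeed this very $n$-dependence is what \cref{x=0 iff y=0} and \cref{thm:main4} exploit to force $\tilde B=0$; the equality \eqref{eq:nmayvary} is a consequence of the existence of the preserver $\phi$, not of the algebraic structure of $\tilde B$ alone, so no argument that only inspects $\tilde B$, $u$, $v$ can prove it.

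The idea your proposal is missing is the paper's bootstrap, which keeps the $u$-block fixed and feeds the matrix back through $\phi$. By \cref{QQ^TLpres}, $\bmat{n\tilde B & Qu\\ (Qv)^T & 0}=(Q\oplus[1])\bmat{nQ^T\tilde BQ & u\\ v^T & 0}(Q^T\oplus[1])$ has the same L-spectrum as $\bmat{nQ^T\tilde BQ & u\\ v^T & 0}$; now apply $\phi$ to this last matrix: its $\mathcal{S}_2$-part maps to $n\tilde B\oplus[0]$ by \cref{thm:main3}, while the hypothesis on $\phi\bigl(\bmat{0 & u\\ v^T & 0}\bigr)$ handles the rest, so the image is $\bmat{(n+1)\tilde B & Qu\\ (Qv)^T & 0}$, again with the same L-spectrum. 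Hence the spectra for parameters $n$ and $n+1$ always coincide, and induction in both directions from the case $n=1$ (which is the hypothesis itself) gives \eqref{eq:nmayvary} for all integers, with no scaling of $u$ ever introduced.
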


\begin{proof} 
First we show that, for all  integers $n$,  \begin{equation}\label{eq:ntildeb}\sigma_L\left(\begin{bmatrix}n\tilde{B} & Qu \\ (Qv)^T & 0\end{bmatrix}\right)=\sigma_L\left(\begin{bmatrix}(n+1)\tilde{B} & Qu \\ (Qv)^T & 0\end{bmatrix}\right).\end{equation} 
Note that $$\begin{bmatrix}\label{matrix:ntilde(B)} n\tilde{B}&Qu\\ (Qv)^T&0 \end{bmatrix}=\begin{bmatrix} Q&0\\ 0&1\end{bmatrix} \begin{bmatrix}nQ^T\tilde{B}Q&u\\v^T&0\end{bmatrix} \begin{bmatrix}Q^T&0\\0&1\end{bmatrix},$$ 
so, by \cref{QQ^TLpres},
$$\sigma_L\left(\begin{bmatrix}n\tilde{B}&Qu\\(Qv)^T&0\end{bmatrix}\right) = \sigma_L\left(\begin{bmatrix} nQ^T\tilde{B}Q&u\\v^T&0\end{bmatrix}\right).$$ Now observe that $$\phi\left(\begin{bmatrix} nQ^T\tilde{B}Q&u\\ v^T&0\end{bmatrix}\right) = \begin{bmatrix}\tilde{B}+nQQ^T\tilde{B}QQ^T & Qu \\ (Qv)^T & 0\end{bmatrix} = \begin{bmatrix}(n+1)\tilde{B}&Qu\\(Qv)^T&0\end{bmatrix}.$$ Since $\phi$ preserves the L-spectrum, we get $$\sigma_L\left(\begin{bmatrix}n\tilde{B}&Qu\\ (Qv)^T&0 \end{bmatrix}\right)=\sigma_L\left(\begin{bmatrix}nQ^T\tilde{B}Q&u \\ v^T&0\end{bmatrix}\right) = \sigma_L\left(\begin{bmatrix} (n+1)\tilde{B}&Qu \\ (Qv)^T&0 \end{bmatrix}\right),$$
which shows \eqref{eq:ntildeb}. 

We now prove the claim by induction on $n.$ Since $\phi$ preserves the L-spectrum, we know that $$\sigma_L\left(\begin{bmatrix}0&u\\ v^T&0 \end{bmatrix}\right) = \sigma_L\left(\begin{bmatrix}\tilde{B}&Qu\\ (Qv)^T&0\end{bmatrix}\right),$$ so  \eqref{eq:nmayvary} holds for $n=1$.
Now assume that \eqref{eq:nmayvary} holds for some integer $n$. Then by \eqref{eq:ntildeb}, it holds for both $n+1$ and $n - 1$, so the claim follows for all integers.
\end{proof}

The next two lemmas analyze  the image of a basis for $\mathcal{S}_3$ under the linear preservers. Note that the assumption on the form of the images of the two matrices in the basis is a consequence of \cref{[A 0; 0 0] partial result}.
 
\begin{lemma}\label{lm:B_1&B_2}Let $\phi:M_3\rightarrow M_3$ be a linear preserver of the L-spectrum with associated orthogonal matrix $Q$, and
let $$\phi\left( \begin{bmatrix}0 & Q^Te_1 \\ 0 & 0\end{bmatrix}\right) = \begin{bmatrix} \tilde{B}_1 & e_1 \\ 0 & 0\end{bmatrix}\quad \textrm{ and}\quad \phi\left( \begin{bmatrix} 0 & Q^Te_2 \\ 0 & 0\end{bmatrix}\right) = \begin{bmatrix} \tilde{B}_2 & e_2 \\ 0 & 0\end{bmatrix}.$$  Then there exist $x, y\in \mathbb{R}$ such that 
$$
\tilde{B}_1 = \begin{bmatrix} 0 & x \\ 0 & y\end{bmatrix} \quad \textrm{and} \quad \tilde{B}_2 = \begin{bmatrix}-x & 0 \\ -y & 0 \end{bmatrix}.
$$
\end{lemma}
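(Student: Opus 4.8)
The plan is to extract everything from the structural constraints already established in \cref{lem:0u-first} (as sharpened in \cref{[A 0; 0 0] partial result}) together with the linearity of $\phi$, the one twist being to apply those constraints not only to $Q^Te_1$ and $Q^Te_2$ individually but also to their sum, which is what couples $\tilde B_1$ and $\tilde B_2$.

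First I would record what is known about each basis matrix: for any nonzero $u\in\R^2$ we have $\phi\bigl(\bmat{0 & u \\ 0 & 0}\bigr)=\bmat{\tilde B & Qu \\ 0 & 0}$ with $\det\tilde B=0$ and $\adj(\tilde B)\,Qu=\tr(\tilde B)\,Qu$. Taking $u=Q^Te_1$ makes $Qu=QQ^Te_1=e_1$, so writing $\tilde B_1=\bmat{a & b \\ c & d}$ and using $\adj(\tilde B_1)=\bmat{d & -b \\ -c & a}$, the identity $\adj(\tilde B_1)e_1=\tr(\tilde B_1)e_1$ reads $\bmat{d \\ -c}=\bmat{a+d \\ 0}$, which forces $a=0$ and $c=0$; the determinant condition is then automatic, and setting $x:=b$, $y:=d$ gives $\tilde B_1=\bmat{0 & x \\ 0 & y}$. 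The same computation with $u=Q^Te_2$ (so $Qu=e_2$) forces the $(1,1)$ and $(1,2)$ entries of $\tilde B_2$ to vanish, giving $\tilde B_2=\bmat{-x' & 0 \\ -y' & 0}$ for some $x',y'\in\R$.

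To finish I would couple the two matrices by feeding $u=Q^T(e_1+e_2)$ (nonzero since $Q$ is invertible) through the same constraint. By linearity, $\phi\bigl(\bmat{0 & Q^T(e_1+e_2) \\ 0 & 0}\bigr)$ has upper-left block $\tilde B_1+\tilde B_2=\bmat{-x' & x \\ -y' & y}$ and upper-right block $e_1+e_2$, so \cref{lem:0u-first} yields $\adj(\tilde B_1+\tilde B_2)(e_1+e_2)=\tr(\tilde B_1+\tilde B_2)(e_1+e_2)$. Since $\adj(\tilde B_1+\tilde B_2)=\bmat{y & -x \\ y' & -x'}$ and $\tr(\tilde B_1+\tilde B_2)=y-x'$, this equation is $\bmat{y-x \\ y'-x'}=\bmat{y-x' \\ y-x'}$, whose first coordinate gives $x=x'$ and whose second gives $y=y'$. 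Hence $\tilde B_2=\bmat{-x & 0 \\ -y & 0}$, which is exactly the claim.

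Everything here is routine $2\times 2$ adjugate/trace algebra, so I do not anticipate a genuine obstacle; the only points requiring care are the bookkeeping of matching $Qu$ to the correct vector when invoking \cref{lem:0u-first}, and recognizing that it is the coupling step through $Q^T(e_1+e_2)$ — rather than the determinant condition, which is automatic throughout — that forces $x'=x$ and $y'=y$. I note that \cref{lm:nB} is not needed for this particular lemma, although it does specialize here (with $v=0$) to $\sigma_L\bigl(\bmat{n\tilde B & Qu \\ 0 & 0}\bigr)=\sigma_L\bigl(\bmat{0 & u \\ 0 & 0}\bigr)$ for every integer $n$.
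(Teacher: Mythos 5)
Your proof is correct, but it takes a genuinely different route from the paper's. The paper does not reuse the adjugate identity from \cref{lem:0u-first}: instead it applies $\phi$ to the symmetric matrices $\bmat{0 & Q^Tu \\ (Q^Tu)^T & 0}$ (identifying the image via \cref{thm:main2} and linearity) and argues from \cref{symmetricua} that $\|u\|$ must remain a boundary L-eigenvalue of the image with $s=0$ and $\xi=u/\|u\|$, which forces $(u_1\tilde B_1+u_2\tilde B_2)u=0$ for every $u\neq 0$; specializing $u$ then gives $\tilde B_1e_1=0$, $\tilde B_2e_2=0$, and $\tilde B_1e_2=-\tilde B_2e_1$. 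You instead extract everything from the constraint $\adj(\tilde B)\,Qu=\tr(\tilde B)\,Qu$ already established in \cref{lem:0u-first}, applied at $u=Q^Te_1$, $u=Q^Te_2$, and—this is the coupling step—$u=Q^T(e_1+e_2)$; note that for $2\times 2$ matrices $\adj(\tilde B)=\tr(\tilde B)I_2-\tilde B$, so this constraint is exactly the kernel condition $\tilde B\,Qu=0$, and your three applications reproduce precisely the same three identities the paper obtains from its spectral argument. What your route buys is economy: once \cref{lem:0u-first} and \cref{[A 0; 0 0] partial result} are available, the lemma becomes routine $2\times 2$ algebra with no further spectral computation. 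What the paper's route buys is a lighter dependence on \cref{lem:0u-first} (only the block form of the images is needed, not the adjugate identity) and a condition derived uniformly in $u$ rather than at three chosen vectors. One small caveat: your aside that the determinant condition is ``automatic throughout'' is loose for the sum $\tilde B_1+\tilde B_2$, where $\det(\tilde B_1+\tilde B_2)=xy'-x'y$ vanishes only after $x=x'$ and $y=y'$ are established; since you never use that condition, this does not affect the argument.
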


\begin{proof}
Let $u=\bmat{u_1, u_2}^T\in\mathbb{R}^2$ be a nonzero vector, and let $A = \begin{bmatrix} 0 & Q^Tu \\ (Q^Tu)^T & 0\end{bmatrix}.$ Then by \cref{thm:main2}
and by the linearity of $\phi$,
$$
\phi(A) = \begin{bmatrix} u_1 \tilde{B}_1 + u_2\tilde{B}_2 & u \\ u^T & 0\end{bmatrix}.
$$

By  \cref{symmetricua}, $\|u\| \in \sigma_{bd}(A) \subseteq \sigma_L(\phi(A)).$ Suppose that $\|u\| \in \sigma_{int}(\phi(A)).$ Then there exists $\xi$ with $\|\xi\|<1$ such that 
$$
0 = \begin{bmatrix} u_1 \tilde{B}_1 + u_2\tilde{B}_2 - \|u\|I_2 & u \\ u^T & -\|u\| \end{bmatrix} \begin{bmatrix} \xi \\ 1\end{bmatrix}.
$$
Then from the second equation, we get $$\|u\| = u^T\xi \leq \|u\|\|\xi\| < \|u\|,$$ a contradiction. Therefore, $\|u\| \in \sigma_{bd}(\phi(A)).$ This means that there exist $ s \geq 0$,  $\mu$, and $\xi$ such that $\|u\|=\mu+s,$ $\|\xi\|=1,$ and
$$
0 = \begin{bmatrix} u_1\tilde{B}_1 + u_2\tilde{B}_2 - \mu I_2 & u \\ u^T & -\|u\| - s\end{bmatrix}\begin{bmatrix} \xi \\ 1\end{bmatrix},
$$
from which it follows that $s=u^T\xi - \|u\| \leq 0.$ This implies $s=0$ and hence $\mu=\|u\|.$ 

The only solution $\xi$ of the equation $u^T\xi-\|u\|=0$  on the unit circle is $\xi=\frac{u}{\|u\|}.$ Thus,  we have
\begin{align}
0 &= (u_1 \tilde{B}_1+u_2 \tilde{B}_2 - \|u\| I_2)\xi + u 
= (u_1\tilde{B}_1 + u_2\tilde{B}_2) \frac{u}{\|u\|} \label{equation:tilde(B)1}
\end{align}
for all $u \neq 0.$ Choosing $u_1 \neq u_2=0$ gives $\tilde{B}_1e_1=0$, and choosing $u_2 \neq u_1 = 0$ gives $\tilde{B}_2e_2=0.$ Plugging  these results in \eqref{equation:tilde(B)1} gives 
$$0 = u_1u_2 (\tilde{B}_1 e_2 + \tilde{B}_2 e_1).$$
Since this must hold when $u_1, u_2 \neq 0$, we deduce
$\tilde{B}_1e_2 = -\tilde{B}_2e_1,$ and the result follows.
\end{proof}

\begin{lemma}\label{x=0 iff y=0}
Let $\tilde{B}_1 = \begin{bmatrix}0&x\\ 0&y\end{bmatrix}$ and $ \tilde{B}_2 = \begin{bmatrix}-x&0\\ -y&0\end{bmatrix}$ be as in  \cref{lm:B_1&B_2}. Then $x = 0$ if and only if $y = 0$.
\end{lemma}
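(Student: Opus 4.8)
The plan is to argue by contradiction: suppose exactly one of $x, y$ is zero and derive a violation of L-spectrum preservation. By \cref{lm:B_1\&B_2} together with the linearity of $\phi$, for any $u = [u_1, u_2]^T$ we know the image of $\bmat{0 & Q^T u \\ 0 & 0}$ is $\bmat{u_1 \tilde B_1 + u_2 \tilde B_2 & u \\ 0 & 0}$, where $u_1 \tilde B_1 + u_2 \tilde B_2 = \bmat{-u_2 x & u_1 x \\ -u_2 y & u_1 y}$. The key structural observation is that this $2\times 2$ block is rank one (it equals $\bmat{x \\ y}\bmat{-u_2 & u_1}$), so $\phi$ sends the pure-$u$ block to a matrix whose upper-left block is a fixed-direction rank-one perturbation. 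By \cref{QQ^TLpres} it suffices to work with $Q = I_2$ (conjugating $\phi$ by $\widehat Q^T$ changes nothing relevant), so I would assume $x = 0$, $y \neq 0$ (the case $x\neq 0$, $y=0$ being symmetric, obtainable by swapping the roles of the two basis vectors via an orthogonal change of coordinates in $\R^2$), and then $\phi\left(\bmat{0 & u \\ 0 & 0}\right) = \bmat{-u_2 y \cdot E_{?} & u \\ 0 & 0}$ — concretely the block is $\bmat{0 & 0 \\ -u_2 y & u_1 y}$.

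Next I would combine this with \cref{thm:main2} and \cref{thm:main3} (whose conclusions are available) to compute $\phi$ on a full matrix $A = \bmat{0 & u \\ v^T & a}$ with a strategically chosen $v$ and $a$: $\phi(A) = \bmat{\tilde B(u) & u \\ v^T & a}$ where $\tilde B(u)$ is the rank-one block above (using $Q = I_2$, so $Qv = v$, $Qu = u$). I would then pick a matrix $A$ whose L-spectrum (computed via \cref{thm:char-Leig} or the symmetric-case lemmas referenced as \cref{symmetricua}/\cref{lem:symmetric}) I can pin down explicitly — ideally choosing $u, v, a$ so that $A$ has some easily identified boundary or interior L-eigenvalue $\lambda$, and then showing that for $\phi(A)$, the presence of the extra nonzero entries $-u_2 y, u_1 y$ in the $(2,1)$ and $(2,2)$ positions of the upper-left block forces $\lambda \notin \sigma_L(\phi(A))$, or forces a spurious L-eigenvalue. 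A clean choice is likely $u = e_1$ (so the block becomes $\bmat{0&0\\0&y}$, i.e.\ $\phi$ adds $y E_{22}$) paired with a suitable $v, a$; alternatively $u = e_2$ makes the block $\bmat{0&0\\-y&0}$, a strictly lower-triangular perturbation, and one tracks how $\adj$ and $\det$ of the perturbed block change.

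I would then run the same computational machinery used in the proofs of \cref{lem:0u-first} and \cref{[A 0; 0 0] partial result} — the technique of letting a parameter (here $a$, or a scaling parameter as in \cref{lm:nB}) grow large and extracting polynomial identities in that parameter. Invoking \cref{lm:nB}, the L-spectrum of $\bmat{n\tilde B_i & e_i \\ 0 & 0}$ is independent of $n$; writing out $\sigma_{bd}$ of this matrix via \cref{thm:char-Leig} with $\mu$ an eigenvalue of $n\tilde B_i$ (which, since $\tilde B_i$ is nilpotent-or-rank-one, has eigenvalues $0$ and $ny$ or $0,0$), the $n$-dependence must cancel, and this cancellation should force $y = 0$. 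The main obstacle I anticipate is bookkeeping: correctly applying the relevant Systems I–IV of \cref{thm:char-Leig} to these nearly-nilpotent $2\times 2$ blocks (the eigenvalue $0$ has geometric multiplicity depending on whether $y = 0$, so one is exactly at the boundary between System III and System IV), and choosing the auxiliary matrix $A$ so that the contradiction is visible after only a short computation rather than a long case split. Getting the right test matrix — one sensitive enough that a single nonzero entry $y$ shifts its L-spectrum — is the crux.
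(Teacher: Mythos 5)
Your overall strategy --- invoke \cref{lm:nB} to pin down an $n$-independent L-spectrum for a family of scaled test matrices and then let $n$ grow until the nonzero entry among $x,y$ forces a spectral change --- is exactly the mechanism of the paper's proof, so the right key lemma is on the table. The problem is that the proposal stops precisely where the proof has to happen: you never fix a test matrix, never compute its ($n$-independent) baseline L-spectrum, and never exhibit the L-eigenvalue that moves with $n$; instead you write that the cancellation ``should force $y=0$'' and concede that finding a matrix sensitive to the single entry $y$ ``is the crux.'' That crux is the entire content of the lemma, so as written there is a genuine gap. The extra machinery you propose (assembling a full matrix from \cref{thm:main2} and \cref{thm:main3}, running Systems I--IV of \cref{thm:char-Leig}, tracking geometric multiplicities) is not needed and suggests the short direct argument was not found. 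Your reduction to the single case $x=0$, $y\neq 0$ also needs justification: one must check that conjugating $\phi$ by the coordinate swap $P\oplus[1]$ produces another preserver whose associated matrices have parameters $(x',y')=(-y,-x)$; this is true but not automatic, and the paper sidesteps it by simply running the analogous argument for $\tilde B_2$.

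For comparison, the paper closes the gap in two lines. Suppose $y=0$ and $x\neq 0$, and set $H_n=\bmat{n\tilde B_1 & e_1\\ e_1^T & 0}$. By \cref{lm:nB} and \cref{symmetricua}, $\sigma_L(H_n)=\sigma_L\left(\bmat{0 & e_1\\ e_1^T & 0}\right)=\{\pm 1\}$ for every integer $n$; but $0$ is a standard eigenvalue of $H_n$ with eigenvector $\left[0,\,-\tfrac{1}{nx},\,1\right]^T$, which lies in the Lorentz cone as soon as $|nx|\geq 1$, so $0\in\sigma_L(H_n)$ for large $|n|$, a contradiction; the case $x=0$, $y\neq0$ is handled identically with $\bmat{n\tilde B_2 & e_2\\ e_2^T & 0}$. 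Your non-symmetric candidate $\bmat{n\tilde B_1 & e_1\\ 0 & 0}$ could also be made to work: its baseline spectrum is $\{-\tfrac12\}$ by \cref{symmetricua}, while for $x=0$, $y\neq0$ it acquires the boundary L-eigenvalue $\tfrac{ny}{2}$ whenever $ny\leq-1$, which is incompatible with an $n$-independent spectrum --- but that computation is exactly what is missing from the proposal.
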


\begin{proof}
Suppose that $y=0$ and $x \neq 0$, and let
$$H_n := \bmat{n\tilde B_1 & e_1 \\ e_1^T & 0}$$
for any integer $n$. Since 
$$\phi\left(\begin{bmatrix}0&Q^Te_1\\ (Q^Te_1)^T&0\end{bmatrix}\right) = \begin{bmatrix} \tilde{B}_1&e_1\\ e_1^T&0\end{bmatrix} = H_1,$$  \cref{lm:nB} implies that
\begin{equation}\label{eq:nb1spectrum}\sigma_L(H_n) = \sigma_L(H_1) = \sigma_L\left(\begin{bmatrix}0&Q^Te_1\\ (Q^Te_1)^T&0\end{bmatrix}\right) = \sigma_L\left(\begin{bmatrix}0&e_1\\ e_1^T&0\end{bmatrix}\right) = \{\pm1\},\end{equation} for all integers $n$, where the last equality follows from \cref{symmetricua}. Next we observe that $ny=0$ is a standard eigenvalue of $H_n$  with associated eigenvector $\xi=\begin{bmatrix}0 & -\frac{1}{nx} & 1\end{bmatrix}^T$. For $n$ sufficiently large, we get $\left|\frac{1}{nx}\right| \leq 1$ which implies that $\xi$ lies in the Lorentz cone and hence $0\in\sigma_L(H_n)$. However, this is a contradiction by  \eqref{eq:nb1spectrum}. Thus,  if  $y=0$, then $x=0$.

By applying a similar argument to $\begin{bmatrix}
n\tilde{B}_2&e_2\\
e_2^T&0
\end{bmatrix}$, we may likewise conclude that if $x=0$, then $y=0$.
\end{proof}

We now arrive at the main result in this section and the final piece for the proof of  \cref{main-thm}.

\begin{theorem}\label{thm:main4}
Let $\phi:M_3\rightarrow M_3$ be a linear preserver of the L-spectrum with associated orthogonal matrix $Q$. Then for all $u\in \mathbb{R}^{2}$,
$$\phi \left (\begin{bmatrix} 0 & u\\0. & 0 \end{bmatrix}\right)=\begin{bmatrix} 0 & Qu \\ 0 & 0 \end{bmatrix}.$$
\end{theorem}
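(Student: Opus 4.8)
The plan is to leverage the structural information already obtained in \cref{lm:B_1&B_2} and \cref{x=0 iff y=0}: by linearity, it suffices to show that $\tilde{B}_1 = 0$ and $\tilde{B}_2 = 0$, i.e., that $x = y = 0$ in the notation of those lemmas. By \cref{x=0 iff y=0}, it is enough to prove $x = 0$ (or equivalently $y = 0$). So the whole theorem reduces to ruling out the case $x \neq 0$, $y \neq 0$.

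To do this, I would pick a convenient element of $\mathcal{S}_3$ and track its L-spectrum through $\phi$, exactly as in the preceding lemmas. A natural choice is $B = \begin{bmatrix} 0 & Q^T e_1 \\ 0 & 0 \end{bmatrix}$, whose image is $\begin{bmatrix} \tilde{B}_1 & e_1 \\ 0 & 0 \end{bmatrix}$ with $\tilde{B}_1 = \begin{bmatrix} 0 & x \\ 0 & y \end{bmatrix}$. Since $\sigma_L(B) = \sigma_L\left(\begin{bmatrix} 0 & e_1 \\ 0 & 0 \end{bmatrix}\right)$ and the latter is easy to compute (a nilpotent matrix — one should check via \cref{thm:char-Leig} what its L-spectrum is, likely $\{0\}$ or the empty set augmented by boundary values), we get a strong constraint on $\sigma_L\left(\begin{bmatrix} \tilde{B}_1 & e_1 \\ 0 & 0 \end{bmatrix}\right)$. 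Better yet, I would use \cref{lm:nB}: for every integer $n$, $\sigma_L\left(\begin{bmatrix} n\tilde{B}_1 & e_1 \\ 0 & 0 \end{bmatrix}\right)$ equals the same fixed set. Then I would compute the boundary L-spectrum of $\begin{bmatrix} n\tilde{B}_1 & e_1 \\ 0 & 0 \end{bmatrix}$ directly from \cref{thm:char-Leig} (here $\tilde{A} = n\tilde{B}_1$ is upper triangular with eigenvalues $0$ and $ny$, $u = e_1$, $v = 0$, $a = 0$) and show that, when $y \neq 0$ (hence $x \neq 0$ by \cref{x=0 iff y=0}), this set genuinely depends on $n$ — for instance, a boundary L-eigenvalue of the form $\tfrac{ny + (\text{something})}{2}$ appears, or the isolated/interior part grows with $n$ — contradicting the $n$-independence forced by \cref{lm:nB}. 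Once $x = y = 0$ is established, the theorem follows: $\phi\left(\begin{bmatrix} 0 & Q^T e_1 \\ 0 & 0 \end{bmatrix}\right) = \begin{bmatrix} 0 & e_1 \\ 0 & 0 \end{bmatrix}$ and similarly for $e_2$, and by linearity $\phi\left(\begin{bmatrix} 0 & Q^T u \\ 0 & 0 \end{bmatrix}\right) = \begin{bmatrix} 0 & u \\ 0 & 0 \end{bmatrix}$ for all $u$; replacing $Q^T u$ by $u$ gives the stated formula.

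The main obstacle I anticipate is the explicit L-spectrum computation for the parametrized family $\begin{bmatrix} n\tilde{B}_1 & e_1 \\ 0 & 0 \end{bmatrix}$ (and its $\tilde{B}_2$ analogue): one must carefully apply the correct system (I, II, III, or IV) from \cref{thm:char-Leig} depending on whether $\mu$ is $0$ or $ny$, check the image and geometric-multiplicity conditions, and handle the Moore–Penrose inverse of the relevant $3\times 2$ block — this is the kind of case analysis that is easy to get subtly wrong, especially distinguishing the subcase $y \neq 0$ from $y = 0$. A secondary subtlety is making sure the contradiction is robust: I need the $n$-dependence to survive for \emph{all} sufficiently large $n$ (or all integers $n$), so I should exhibit a specific $\lambda$ that lies in $\sigma_L$ of one member of the family but not another, rather than merely arguing that the spectra "look different." Using interior L-eigenvalues where possible (which are just ordinary eigenvalues with eigenvector inside the cone, by \cref{thm-interior}) should simplify matters, since the eigenvalue $ny$ of $n\tilde{B}_1$ is visibly an interior L-eigenvalue of $\begin{bmatrix} n\tilde{B}_1 & e_1 \\ 0 & 0 \end{bmatrix}$ for large $n$ when $x \neq 0$, giving an L-eigenvalue that tends to $\pm\infty$ — an immediate contradiction with the bounded, $n$-independent spectrum.
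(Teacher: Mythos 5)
Your reduction (to showing $x=y=0$ via \cref{lm:B_1&B_2} and \cref{x=0 iff y=0}) and your use of \cref{lm:nB} with $v=0$ are fine, but the step you lean on for the contradiction is false. Write $G_n:=\begin{bmatrix} n\tilde B_1 & e_1\\ 0 & 0\end{bmatrix}$ with $\tilde B_1=\begin{bmatrix}0 & x\\ 0 & y\end{bmatrix}$, $y\neq 0$. The eigenvalue $ny$ of $n\tilde B_1$ is \emph{not} an interior L-eigenvalue of $G_n$: by \cref{thm-interior} an interior L-eigenvalue needs an eigenvector of the form $[\xi^T,1]^T$, and the third row of $(G_n-\lambda I)[\xi^T,1]^T=0$ forces $\lambda=0$; moreover the ordinary eigenvector of $G_n$ for $ny$ is proportional to $[x/y,\,1,\,0]^T$, which does not lie in the Lorentz cone. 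In fact $\sigma_{int}(G_n)=\emptyset$ for all $n\neq 0$ when $y\neq 0$, so there is no interior L-eigenvalue "tending to $\pm\infty$" and no immediate contradiction; the entire burden falls on the boundary-spectrum computation that you deferred as an "anticipated obstacle." (Also, the constant reference spectrum is not "$\{0\}$ or empty": by \cref{symmetricua}, $\sigma_L\bigl(\begin{bmatrix}0 & e_1\\ 0 & 0\end{bmatrix}\bigr)=\{-1/2\}$, so \cref{lm:nB} gives $\sigma_L(G_n)=\{-1/2\}$ for all integers $n$.)

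The gap is repairable, and the repaired argument is essentially the paper's. By \cref{thm:boundary}, a boundary L-eigenvalue $\lambda$ of $G_n$ satisfies $s=-\lambda\geq 0$ and $(n\tilde B_1-2\lambda I)\xi=-e_1$ with $\|\xi\|=1$; taking $\xi_2\neq 0$ forces $2\lambda=ny$, and the remaining equation $-ny\xi_1+nx\xi_2=-1$ has a unit solution as soon as $|n|\geq 1/\sqrt{x^2+y^2}$. Choosing the sign of $n$ so that $ny\le 0$ (this sign bookkeeping is unavoidable and is exactly the case analysis in the paper's choice of $N$), one gets the boundary L-eigenvalue $ny/2$, which leaves $\{-1/2\}$ as $|n|\to\infty$ — the desired contradiction. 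The paper runs the same scheme but with the symmetrized family $H_n=\begin{bmatrix} n\tilde B_1 & e_1\\ e_1^T & 0\end{bmatrix}$ (adding the bottom row via \cref{thm:main2}), whose constant spectrum $\{\pm 1\}$ comes straight from \cref{symmetricua}; the unwanted boundary eigenvalue is then exhibited for all large $|n|$ of one sign. So your proposal follows the paper's strategy, but as written the decisive step is wrong and the actual contradiction is missing.
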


\begin{proof}
Note that it is enough to prove
 $$\phi\left( \begin{bmatrix}0 & Q^Te_1 \\ 0 & 0\end{bmatrix}\right) = \begin{bmatrix} 0 & e_1 \\ 0 & 0\end{bmatrix}\quad \textrm{ and}\quad \phi\left( \begin{bmatrix} 0 & Q^Te_2 \\ 0 & 0\end{bmatrix}\right) = \begin{bmatrix} 0 & e_2 \\ 0 & 0\end{bmatrix}$$
 since $\{Q^Te_1, Q^Te_2\}$ is a basis for $\mathbb{R}^2$. That is, we want to show that $\tilde B_1 = \tilde B_2 = 0$, where the matrices $\tilde B_1 = \bmat{0 & x \\ 0 & y}$ and $\tilde B_2 = \bmat{-x & 0 \\ -y & 0}$ are as in \cref{lm:B_1&B_2}.
 
Suppose for sake of contradiction that $x\neq0$, which implies $y \neq 0$ by  \cref{x=0 iff y=0}. Let
$$H_n := \bmat{n\tilde B_1 & e_1 \\ e_1^T & 0}$$
for any integer $n$. By \eqref{eq:nb1spectrum}, we have $\sigma_L(H_n) = \{\pm1\}$ for all $n$.

Let $n\neq0$, $\mu=ny$, and $\lambda = \mu + s$ for some real number $s$. Then $\lambda \in \sigma_{bd}(H_n)$ if and only if there exists some $\xi=[\xi_1,\xi_2]^T \in \mathbb{R}^2$ such that
\begin{equation}\label{eq:eigenvaluesb1}\begin{bmatrix}-ny&nx&1\\ 0&0&0\\ 1&0&-ny-2s\end{bmatrix} \begin{bmatrix}\xi_1\\\xi_2\\1\end{bmatrix}=0, \quad\|\xi\|=1, \quad \text{and} \quad s\geq0,\end{equation} or equivalently,
\begin{equation}\label{condition8}
    -ny\xi_1+nx\xi_2+1 = 0,
\end{equation}
\begin{equation}\label{condition9}
    \xi_1-ny-2s = 0.
\end{equation}

We now show that there exists some real number $N$ such that either $N$ is positive and it is possible to satisfy \eqref{eq:eigenvaluesb1} for any integer $n>N$, or $N$ is negative and it is possible to satisfy \eqref{eq:eigenvaluesb1} for any integer $n<N$. Let $$N:=\begin{cases}\min\left(-\frac{1}{\sqrt{x^2+y^2}},-\frac{3}{y}\right) & \text{if }y>0; \\\max\left(\frac{1}{\sqrt{x^2+y^2}},-\frac{3}{y}\right) & \text{if }y<0.\end{cases}$$
For each $n\neq 0$, \eqref{condition8} has a solution $\xi^{(n)}$ with $||\xi^{(n)}|| = 1$ if and only if $|n| \geq \frac{1}{\sqrt{x^2+y^2} }$, which holds whenever $|n|>|N|$.

Note that \eqref{condition9} is equivalent to $$s = \frac{\xi_1-ny}{2}.$$ For each $\xi^{(n)}$ with $||\xi^{(n)}|| = 1$ satisfying \eqref{condition8} and for each $n$ in the range specified above, we know that
$$\xi^{(n)}_1\geq -1 > -3 \geq Ny > ny,$$
which guarantees that
$$s_n := \frac{\xi^{(n)}_{1}-ny}{2}>0.$$
To conclude the argument, choose $n$ in the range specified above so that $\xi^{(n)}$ and $s_n$ satisfy (\ref{eq:eigenvaluesb1}). Then $\lambda = \mu + s_n \in \sigma_{bd}(H_n) = \{\pm 1\}$. However, this gives a contradiction since
$$
    \lambda = \frac{\xi^{(n)}_{1}+ny}{2} < \frac{\xi_{1}^{(n)}-Ny}{2} \leq \frac{1 - 3}{2} = -1.$$
It follows that  $x= 0$ and hence $y = 0$ by \cref{x=0 iff y=0}. Therefore, $\tilde{B}_1=\tilde{B}_2=0$.
\end{proof}

\section{Conclusions}
In this paper, we have analyzed the linear preservers of the Lorentz spectrum of $3\times 3$ real matrices and proven that every such linear map $\phi$ must be of the form
$$\phi(A) = \bmat{Q &0 \\ 0 &1} A \bmat{Q^T & 0 \\ 0 & 1}$$
for some orthogonal $Q\in M_2$, as conjectured in \cite{maribel1}. An immediate corollary of this result is that the linear preservers of the L-spectrum on $M_3$ must take interior (resp.\ boundary) L-eigenvalues to interior (resp.\ boundary) L-eigenvalues. Our proof relies on the particular form of $3 \times 3$ matrices with infinitely many L-eigenvalues, which makes it difficult to generalize this result to higher dimensions since $n \times n$ matrices with this property can be much more complicated. Thus, it is likely that different techniques will be necessary to prove the corresponding result for $M_n$, but we hope that some of the strategies we developed in this paper will still be applicable in this case.

\appendix
\section{L-spectrum of some special matrices}

Here we provide some results for $n \times n$ matrices, where $n \geq 3$, and one result specific to $3\times 3$ matrices.

\subsection[Results for n x n matrices]{Results for $n\times n$ matrices}

The following result is an immediate consequence of Corollary 3.3 in \cite{maribel1}.
\begin{lemma}\label{diagonalca}
Let $A=\begin{bmatrix} cI_{n - 1} & 0 \\ 0 & a \end{bmatrix} \in M_n$. Then
$$\sigma_L(A) = \left\{ \begin{array}{cc} \{a\} & \textrm{if $c>a$} \\ \left\{ a, \frac{a+c}{2} \right\} & \textrm{if $c\leq a$}\end{array} \right..$$
\end{lemma}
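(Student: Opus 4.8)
The plan is to compute $\sigma_{int}(A)$ and $\sigma_{bd}(A)$ separately from the explicit characterizations in \cref{thm-interior} and \cref{thm:boundary}, and then use the fact that $\sigma_L(A)=\sigma_{int}(A)\cup\sigma_{bd}(A)$. Because $A=cI_{n-1}\oplus[a]$ has such a simple structure, both computations collapse to a two-line linear algebra argument, so no heavy machinery is needed. (As the text notes, one could instead just quote Corollary 3.3 of \cite{maribel1}, but the direct computation is self-contained and short.)

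For the interior spectrum, by \cref{thm-interior} a real number $\lambda$ lies in $\sigma_{int}(A)$ precisely when there is $\xi\in\mathbb{R}^{n-1}$ with $\|\xi\|<1$ and $(A-\lambda I)[\xi^T,1]^T=0$. Writing this out componentwise gives $(c-\lambda)\xi=0$ and $a-\lambda=0$, so necessarily $\lambda=a$; then either $c\neq a$, which forces $\xi=0$ (an admissible interior witness), or $c=a$, in which case every $\xi$ with $\|\xi\|<1$ works. In all cases $\sigma_{int}(A)=\{a\}$.

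For the boundary spectrum, by \cref{thm:boundary} we have $\lambda\in\sigma_{bd}(A)$ iff there exist $\xi\in\mathbb{R}^{n-1}$ with $\|\xi\|=1$ and $s\geq 0$ satisfying $(A-\lambda I)[\xi^T,1]^T=s[-\xi^T,1]^T$. Componentwise this is $(c-\lambda)\xi=-s\xi$ and $a-\lambda=s$; since $\|\xi\|=1$ forces $\xi\neq 0$, the first equation yields $\lambda=c+s$, and together with the second we get $2s=a-c$. Hence a boundary L-eigenvalue exists iff $s=\tfrac{a-c}{2}\geq 0$, i.e.\ iff $c\leq a$, in which case $\lambda=\tfrac{a+c}{2}$ (any unit vector $\xi$ serving as a witness), while $\sigma_{bd}(A)=\emptyset$ when $c>a$. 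Combining the two parts gives $\sigma_L(A)=\{a\}$ when $c>a$ and $\sigma_L(A)=\{a,\tfrac{a+c}{2}\}$ when $c\leq a$, with the two listed values coinciding in the borderline case $c=a$.

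I do not anticipate a genuine obstacle here: the only points that require a moment's care are verifying that $\xi=0$ is a legitimate interior witness when $c\neq a$, and noting that the displayed set $\{a,\tfrac{a+c}{2}\}$ degenerates correctly to $\{a\}$ at $c=a$ so that the two cases of the statement are consistent at their boundary.
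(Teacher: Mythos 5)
Your argument is correct, but it takes a different route from the paper: the paper does not prove \cref{diagonalca} at all, it simply records it as an immediate consequence of Corollary 3.3 in \cite{maribel1}, whereas you derive it from scratch using the paper's own characterizations of interior and boundary L-eigenvalues (\cref{thm-interior} and \cref{thm:boundary}). Your computation checks out: the second block row of $(A-\lambda I)[\xi^T,1]^T=0$ forces $\lambda=a$ and $\xi=0$ is indeed an admissible interior witness (it satisfies $\|\xi\|<1$, and $[0,\dots,0,1]^T$ lies in the interior of $\mathcal{K}_n$), so $\sigma_{int}(A)=\{a\}$; and on the boundary side, $\|\xi\|=1$ forces $c-\lambda=-s$ together with $a-\lambda=s$, giving $s=\tfrac{a-c}{2}$ and $\lambda=\tfrac{a+c}{2}$ exactly when $c\leq a$, with the two cases matching at $c=a$. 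What your approach buys is self-containedness and a bit of transparency about which eigenvalue is interior and which is boundary (information the lemma's statement does not record but which is consistent with \cref{spec-inf}); what the paper's citation buys is brevity and consistency with its general policy of importing the spectral formulas for these structured matrices from \cite{maribel1}. Either justification is acceptable here.
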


The next result follows from Theorem 3.4 in \cite{maribel1}.

\begin{lemma}\label{symmetricua}
 Let
\[
A=\left[
\begin{array}
[c]{cc}
0 & u\\
v^{T} & a
\end{array}
\right]  ,
\]
where $u,v\in\mathbb{R}^{n-1}$ are not both zero and $a\in\mathbb{R}$. Then

\begin{enumerate}
\item $0\in \sigma_{int}(A)$ (resp.\ $0 \in \sigma_{bd}(A)$)  if and only if $u=0$ and
$|a|<\Vert v\Vert$ (resp.\ $|a|\leq \|v\|$).

\item If $\lambda\neq 0,$ then $\lambda \in \sigma_{int}(A)$ (resp.\ $\lambda \in \sigma_{bd}(A)$)  if and only if $|\lambda|>\Vert u\Vert$ (resp.\ $|\lambda| \geq \|u\|$) and $\lambda^{2}-a\lambda-v^{T}u=0.$

\item If $u\neq0$, then $\lambda$ is a nonstandard Lorentz eigenvalue of $A$
if and only if one of the following holds:

\begin{enumerate}
\item[(i)] $v^{T}u+a\Vert u\Vert-\Vert u\Vert^{2}>0$ and $\lambda=\frac
{1}{2\Vert u\Vert}[{a \Vert u\Vert+{\Vert u\Vert}^{2}+v^{T}u}]$.

\item[(ii)] $\Vert u\Vert^{2}+a\Vert u\Vert-v^{T}u>0$ and $\lambda=\frac
{1}{2\Vert u\Vert}[{a \Vert u\Vert-{\Vert u\Vert}^{2}-v^{T}u}]$.
\end{enumerate}

\item If $u=0$ (and hence $v\neq0$), then $\lambda$ is a nonstandard Lorentz eigenvalue
of $A$ if and only if

\[
\lambda\in\left[  \frac{a-\Vert v\Vert}{2},\frac{a+\Vert v\Vert}{2}\right]
\cap(0,\infty).
\]

\end{enumerate}

\end{lemma}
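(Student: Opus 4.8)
The plan is to reduce every claim to the explicit systems of \cref{thm-interior} and \cref{thm:boundary} and then solve those systems by hand, exploiting the block structure of $A$. Writing a normalized candidate L-eigenvector as $x=[\xi^T,1]^T$, one computes
\[
(A-\lambda I_n)\begin{bmatrix}\xi\\1\end{bmatrix}=\begin{bmatrix}u-\lambda\xi\\ v^T\xi+a-\lambda\end{bmatrix},
\]
so an interior L-eigenvalue corresponds to a solution of $u=\lambda\xi$, $v^T\xi=\lambda-a$ with $\|\xi\|<1$, while a boundary L-eigenvalue corresponds to a solution of $u-\lambda\xi=-s\xi$, $v^T\xi+a-\lambda=s$ with $\|\xi\|=1$ and $s\ge0$.

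I would first dispose of the interior case, which yields the $\sigma_{int}$ halves of items 1 and 2. If $\lambda=0$, then $u=\lambda\xi$ forces $u=0$ (hence $v\ne0$), and we need $\xi$ in the open unit ball with $v^T\xi=-a$; since $v\ne0$ the attainable values of $v^T\xi$ form the open interval $(-\|v\|,\|v\|)$, so such $\xi$ exists iff $|a|<\|v\|$. If $\lambda\ne0$, then $\xi=u/\lambda$ is forced, $\|\xi\|<1$ becomes $|\lambda|>\|u\|$, and the second equation becomes $\lambda^2-a\lambda-v^Tu=0$. The closed-inequality ($\sigma_{bd}$) analogues fall out of the $s=0$ subcase below, using the closed ball and closed sphere in place of the open ones.

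Next I would treat the boundary case. Setting $\mu:=\lambda-s$ turns the two equations into $u=\mu\xi$ and $v^T\xi=\mu+2s-a$, and I split on whether $u=0$. If $u=0$, then $u=\mu\xi$ together with $\|\xi\|=1$ forces $\mu=0$, i.e.\ $\lambda=s\ge0$, and we need a unit $\xi$ with $v^T\xi=2\lambda-a$; since $v\ne0$ this is solvable iff $|2\lambda-a|\le\|v\|$, i.e.\ $\lambda\in\big[\tfrac{a-\|v\|}{2},\tfrac{a+\|v\|}{2}\big]$. Intersecting with $\lambda\ge0$ gives the $\sigma_{bd}$ half of item 1 at $\lambda=0$ (namely $|a|\le\|v\|$) and gives item 4 for $\lambda>0$. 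If $u\ne0$, then $\|\xi\|=1$ and $u=\mu\xi$ force $\mu=\pm\|u\|$ and $\xi=u/\mu$; substituting into $v^T\xi=\mu+2s-a$ and solving for $s$ yields $s=\dfrac{v^Tu-\mu^2+a\mu}{2\mu}$. The subcase $s=0$ is precisely $\mu^2-a\mu-v^Tu=0$ and recovers the $\sigma_{bd}$ half of item 2 (at $\lambda=\pm\|u\|$). The subcase $s>0$ produces the nonstandard eigenvalues of item 3: for $\mu=\|u\|$ the requirement $s>0$ reads $v^Tu+a\|u\|-\|u\|^2>0$, and then $\lambda=\mu+s=\tfrac{1}{2\|u\|}\big(a\|u\|+\|u\|^2+v^Tu\big)$, which is 3(i); for $\mu=-\|u\|$ the requirement $s>0$ reads $\|u\|^2+a\|u\|-v^Tu>0$, and then $\lambda=\tfrac{1}{2\|u\|}\big(a\|u\|-\|u\|^2-v^Tu\big)$, which is 3(ii).

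The main obstacle is not any individual computation but the bookkeeping: one must keep track of several nested dichotomies ($\lambda=0$ versus $\lambda\ne0$, $u=0$ versus $u\ne0$, interior versus boundary eigenvector, and within the boundary case $s=0$ versus $s>0$, equivalently $\mu=\|u\|$ versus $\mu=-\|u\|$), being careful which branch produces a strict inequality (interior, values attained on the open ball) versus a non-strict one (boundary, values attained on the closed sphere), and which produces a genuine standard eigenvalue ($s=0$) as opposed to a properly nonstandard L-eigenvalue ($s>0$). I would organize the write-up by first proving two self-contained equivalences, ``$\lambda\in\sigma_{int}(A)$ iff \dots'' and ``$\lambda\in\sigma_{bd}(A)$ iff \dots'', each with a short internal split on $\lambda=0$ and on $u=0$, and only afterwards read off items 1--4 from these.
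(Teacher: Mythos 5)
The paper offers no argument for this lemma at all---it is quoted as a consequence of Theorem 3.4 in \cite{maribel1}---so your plan of verifying it directly from \cref{thm-interior} and \cref{thm:boundary} is a genuinely different, self-contained route, and the systems you set up are the right ones. The interior halves of items 1 and 2, item 3 (via $\mu=\pm\Vert u\Vert$, $\xi=u/\mu$, $s=(v^{T}u-\mu^{2}+a\mu)/(2\mu)$, with $s>0$ giving exactly the strict inequalities and the stated formulas for $\lambda$), and item 4 (via $\mu=0$, $\lambda=s$) all come out correctly and with both implications.

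The gap is in the last step, ``read off items 1--4.'' The boundary (``resp.'') halves of items 1 and 2 are stated as if-and-only-if descriptions of membership in $\sigma_{bd}(A)$, and they do not ``fall out of the $s=0$ subcase'': that subcase yields only the forward implication at $|\lambda|=\Vert u\Vert$, while item 2 asserts an equivalence for all $|\lambda|\geq\Vert u\Vert$, and neither only-if direction is addressed. In fact your own $s>0$ branch shows these two claims cannot be proved as literally written: every $\lambda$ produced by item 3 has an L-eigenvector on the boundary of $\mathcal{K}_n$, hence lies in $\sigma_{bd}(A)$, without in general satisfying $\lambda^{2}-a\lambda-v^{T}u=0$. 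Concretely, for $u=e_1$, $v=0$, $a=3$ one finds $\sigma_{bd}(A)=\{1,2\}$, neither of which is a root of the quadratic, whereas $\lambda=3$ is a root with $|\lambda|\geq\Vert u\Vert$ and lies only in $\sigma_{int}(A)$; similarly $0\in\sigma_{bd}(A)$ is possible with $u\neq 0$, precisely when $a\Vert u\Vert-\Vert u\Vert^{2}-v^{T}u=0$, in which case item 3(ii) itself returns $\lambda=0$. Your computation is not wrong---it exposes that items 1 and 2 must be read, as in the cited source, as characterizing those $\lambda$ admitting a Lorentz eigenvector $x$ with $(A-\lambda I)x=0$, the strict inequality corresponding to $x$ in the interior of $\mathcal{K}_n$ and the non-strict one to $x\in\mathcal{K}_n$ (and indeed only that reading, the interior statements, and the $|\lambda|=\Vert u\Vert$ instance are used elsewhere in the paper). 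You should either adopt that reading explicitly and prove it by your open-ball/closed-ball argument ($\xi=u/\lambda$ with $\Vert\xi\Vert\leq 1$ for $\lambda\neq 0$; $\Vert\xi\Vert\leq 1$, $v^{T}\xi=-a$ for $\lambda=0$), or flag the discrepancy; as written, the boundary parts of items 1 and 2 do not follow from your two equivalences.
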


\begin{lemma}\label{spec-inf}
Let $$A = \left[ \begin{array}{cc} c I_{n-1} & 0 \\ v^T & a \end{array} \right], \quad \text{where} \quad a,c\in \mathbb{R} \quad \text{and} \quad 0\neq v\in \mathbb{R}^{n-1}.$$
Then
\begin{enumerate}
\item if $c <a -\|v\| $, then $\sigma_{int}(A)=\{a\}$ and $$\sigma_{bd}(A) =  \left[  \frac{c+a - \|v\|}{2} , \frac{c+a + \|v\|}{2} \right] .$$
Moreover, $\sigma_{int}(A) \cap \sigma_{bd}(A) = \emptyset$. 
\item if $c=a-\|v\|$, then $\sigma_{int}(A)=\{a\}= \{\|v\|+c\},$ and
$$\sigma_{bd}(A) =[c, \|v\|+c].$$ 
Therefore, $a\in \sigma_{int}(A)\cap \sigma_{bd}(A).$
\item if $ a- \|v\|< c <a+ \|v\|$, then $\sigma_{int}(A)=\{a, c\}$ and
$$\sigma_{bd}(A) =  \left[ c, \frac{c+a + \|v\|}{2} \right] .$$
Therefore, $c \in \sigma_{int}(A)\cap \sigma_{bd}(A).$ Moreover,   $a\in \sigma_{int}(A)\cap \sigma_{bd}(A)$ if and only if  $c \leq a$.
\item if $c= a+\|v\| $, then $$\sigma_{int}(A) =\{a\}\quad \textrm{ and} \quad \sigma_{bd}(A) =\{c\}.$$
\item if $a+\|v\| < c$, then
$$\sigma_{int}(A)=\{a\}\quad \textrm{ and}\quad \sigma_{bd}(A)=\emptyset.$$
\end{enumerate}
\end{lemma}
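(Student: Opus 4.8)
The plan is to read off $\sigma_{int}(A)$ and $\sigma_{bd}(A)$ directly from \cref{thm-interior} and \cref{thm:boundary}, testing a generic vector $(\xi^T,1)^T$, and then to sort the answer into the five listed cases. Since the top $(n-1)\times n$ block of $A$ is $[\,cI_{n-1}\;\;0\,]$ and the bottom row is $[\,v^T\;\;a\,]$, applying $A-\lambda I_n$ to $(\xi^T,1)^T$ produces the pair $\bigl((c-\lambda)\xi,\ v^T\xi+a-\lambda\bigr)$. Hence the interior condition of \cref{thm-interior} — that this pair be zero with $\|\xi\|<1$ — reads $(c-\lambda)\xi=0$ and $v^T\xi=\lambda-a$. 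This splits into $\xi=0$, forcing $\lambda=a$ (always attainable, since $(0^T,1)^T$ lies in the interior of $\mathcal{K}_n$), and $\lambda=c$ with $\xi\neq 0$, for which $v^T\xi=c-a$ has a solution of norm $<1$ precisely when $|a-c|<\|v\|$ (the minimal norm of such a $\xi$ is $|a-c|/\|v\|$). Therefore $\sigma_{int}(A)=\{a\}$ when $|a-c|\geq\|v\|$ and $\sigma_{int}(A)=\{a,c\}$ when $|a-c|<\|v\|$.

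For $\sigma_{bd}(A)$ I would instead use \cref{thm:boundary}: $\lambda\in\sigma_{bd}(A)$ iff there are a unit vector $\xi\in\R^{n-1}$ and $s\geq 0$ with $\bigl((c-\lambda)\xi,\ v^T\xi+a-\lambda\bigr)=s(-\xi,1)$. The top block gives $(c-\lambda+s)\xi=0$; since $\|\xi\|=1$ this forces $s=\lambda-c$, and $s\geq0$ becomes $\lambda\geq c$. Substituting into the bottom block gives $v^T\xi=2\lambda-a-c$, which is realizable by a unit vector of $\R^{n-1}$ exactly when $|2\lambda-a-c|\leq\|v\|$; this is the one place $n\geq 3$ enters, since on the unit sphere of $\R^{n-1}$ (connected for $n-1\geq 2$) the linear functional $\xi\mapsto v^T\xi$ attains every value in $[-\|v\|,\|v\|]$. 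Because \cref{thm:boundary} is an ``if and only if,'' no separate verification of the cone and complementarity conditions is needed. Thus $\sigma_{bd}(A)=\bigl[\tfrac{a+c-\|v\|}{2},\ \tfrac{a+c+\|v\|}{2}\bigr]\cap[c,\infty)$.

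It then remains to convert these two closed forms into the five cases, comparing $c$ with the endpoints: the left endpoint $\tfrac{a+c-\|v\|}{2}$ exceeds $c$ iff $c<a-\|v\|$, and the right endpoint $\tfrac{a+c+\|v\|}{2}$ exceeds $c$ iff $c<a+\|v\|$. Case by case this yields that $\sigma_{bd}(A)$ is the full interval $\bigl[\tfrac{c+a-\|v\|}{2},\tfrac{c+a+\|v\|}{2}\bigr]$ when $c<a-\|v\|$, the interval $[c,\|v\|+c]$ when $c=a-\|v\|$, the interval $\bigl[c,\tfrac{c+a+\|v\|}{2}\bigr]$ when $a-\|v\|<c<a+\|v\|$, the singleton $\{c\}$ when $c=a+\|v\|$, and $\emptyset$ when $c>a+\|v\|$. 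For the intersection assertions one reads, straight from the displayed formula, that $c\in\sigma_{bd}(A)\iff|a-c|\leq\|v\|$ and $a\in\sigma_{bd}(A)\iff c\leq a\leq c+\|v\|$; combining with ``$a\in\sigma_{int}(A)$ always'' and ``$c\in\sigma_{int}(A)\iff|a-c|<\|v\|$'' gives the $\sigma_{int}\cap\sigma_{bd}$ statements — the two sets are disjoint in case 1 (there $c<a-\|v\|$ fails $a\leq c+\|v\|$), $a$ lies in both in case 2, and in case 3 one has $c$ in both always and $a$ in both iff $c\leq a$ (the hypothesis $a-\|v\|<c$ already forces $a<c+\|v\|$).

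The computations with \cref{thm-interior} and \cref{thm:boundary} are short and essentially forced, so I expect the only real care to be the final bookkeeping: getting the degenerate cases $c=a\pm\|v\|$ to come out as the stated closed intervals or singletons rather than half-open sets, and tracking exactly when the isolated L-eigenvalues $a$ and $c$ fall inside the interval $\bigl[\tfrac{a+c-\|v\|}{2},\tfrac{a+c+\|v\|}{2}\bigr]$, since that is what simultaneously produces the second element of $\sigma_{int}(A)$ in case 3 and all of the $\sigma_{int}\cap\sigma_{bd}$ claims.
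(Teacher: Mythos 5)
Your proof is correct, but it follows a genuinely different route from the paper. The paper disposes of this lemma in one line: it writes $A = cI_n + \bigl[\begin{smallmatrix} 0 & 0 \\ v^T & a-c \end{smallmatrix}\bigr]$, invokes \cref{symmetricua} (the L-spectrum of matrices $\bigl[\begin{smallmatrix} 0 & u \\ v^T & a \end{smallmatrix}\bigr]$, imported from \cite{maribel1}) with $u=0$, and uses the shift rule $\sigma_L(A+\gamma I)=\sigma_L(A)+\gamma$ (which also preserves the interior/boundary nature of eigenvalues, a point the paper leaves implicit). You instead compute everything from scratch out of \cref{thm-interior} and \cref{thm:boundary}: the interior analysis correctly yields $\sigma_{int}(A)\subseteq\{a,c\}$ with $a$ always present and $c$ present iff $|a-c|<\|v\|$, and the boundary analysis correctly yields $\sigma_{bd}(A)=\bigl[\tfrac{a+c-\|v\|}{2},\tfrac{a+c+\|v\|}{2}\bigr]\cap[c,\infty)$, from which the five cases and the intersection claims follow exactly as you describe; your observation that surjectivity of $\xi\mapsto v^T\xi$ onto $[-\|v\|,\|v\|]$ on the unit sphere uses $n-1\geq 2$ is also accurate and consistent with the appendix's standing assumption $n\geq 3$. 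What your approach buys is self-containedness (no reliance on \cref{symmetricua}, hence no dependence on Theorem 3.4 of \cite{maribel1}) and a single closed formula for $\sigma_{bd}(A)$ that makes the case analysis and the $\sigma_{int}\cap\sigma_{bd}$ bookkeeping transparent; what the paper's route buys is brevity and reuse of an already-stated result, at the cost of the reader having to check that the translation argument respects the interior/boundary distinction.
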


\begin{proof}
This result follows from \cref{symmetricua} and the fact that $\sigma_L(A+\gamma I) = \sigma_L(A) + \gamma$ for all $\gamma\in \mathbb{R}.$
\end{proof}

 \begin{lemma}\label{lem:a-int}
Let $A = \bmat{\tilde A & u \\ 0 & a}$, where $\tilde A \in M_{n - 1}$ and $u \in \R^{n - 1}$ are fixed. Then $a \in \sigma_{int}(A)$ for all sufficiently large $a$.
\end{lemma}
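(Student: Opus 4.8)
The plan is to use \cref{thm-interior}, which says $a \in \sigma_{int}(A)$ if and only if there exists $\xi \in \R^{n-1}$ with $\|\xi\| < 1$ and $(A - aI)\begin{bmatrix}\xi\\1\end{bmatrix} = 0$. Writing out the block structure of $A - aI_n$ with $A = \bmat{\tilde A & u \\ 0 & a}$, the bottom row of the equation $(A-aI)\bmat{\xi\\1} = 0$ reads $0^T\xi + (a - a)\cdot 1 = 0$, which is automatically satisfied. So the only real constraint comes from the top block: $(\tilde A - aI_{n-1})\xi + u = 0$.

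First I would observe that for $a > \rho(\tilde A)$ (equivalently, for all sufficiently large $a$, since $\rho(\tilde A)$ is a fixed finite number), the matrix $\tilde A - aI_{n-1}$ is invertible, because $a$ exceeds the modulus of every eigenvalue of $\tilde A$. Hence the top-block equation has the unique solution
$$\xi = -(\tilde A - aI_{n-1})^{-1} u = (aI_{n-1} - \tilde A)^{-1} u.$$
Then I would bound the norm of this $\xi$. Since $(aI_{n-1} - \tilde A)^{-1} = \frac{1}{a}\left(I_{n-1} - \frac{1}{a}\tilde A\right)^{-1}$, and $\left(I_{n-1} - \frac{1}{a}\tilde A\right)^{-1} \to I_{n-1}$ as $a \to \infty$, we get $\|\xi\| \le \frac{1}{a}\left\|\left(I_{n-1} - \frac{1}{a}\tilde A\right)^{-1}\right\| \|u\| \to 0$ as $a \to \infty$. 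In particular, $\|\xi\| < 1$ for all sufficiently large $a$. Therefore, for all sufficiently large $a$, the vector $\bmat{\xi\\1}$ with $\xi = (aI_{n-1} - \tilde A)^{-1} u$ is an interior L-eigenvector of $A$ associated with $a$, so $a \in \sigma_{int}(A)$ by \cref{thm-interior}.

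There is no real obstacle here; the statement is essentially a direct computation with \cref{thm-interior}. The only point requiring any care is making ``sufficiently large'' precise — one should note that we need $a > \rho(\tilde A)$ for invertibility and then $a$ large enough that $\|(aI_{n-1} - \tilde A)^{-1}u\| < 1$, both of which hold for all $a$ beyond some threshold depending only on the fixed data $\tilde A$ and $u$. Alternatively, one could quote the Neumann series $\left(I_{n-1} - \frac{1}{a}\tilde A\right)^{-1} = \sum_{k\ge 0} a^{-k}\tilde A^k$, valid for $a > \rho(\tilde A)$, to make the norm estimate completely explicit, but the qualitative limit argument above suffices.
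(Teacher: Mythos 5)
Your proposal is correct and follows essentially the same route as the paper: reduce via \cref{thm-interior} to the single equation $(\tilde A - aI_{n-1})\xi + u = 0$, invert for $a > \rho(\tilde A)$, and show $\|\xi\| \to 0$ as $a \to \infty$. The only difference is cosmetic — you establish the limit by writing $(aI_{n-1}-\tilde A)^{-1} = \tfrac{1}{a}\bigl(I_{n-1}-\tfrac{1}{a}\tilde A\bigr)^{-1}$, whereas the paper uses the adjugate/determinant formula and a degree count, and both estimates are valid.
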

\begin{proof}
$a$ is an interior L-eigenvalue of $A$ if and only if there exists $\xi$ with $||\xi|| < 1$ such that
$$(\tilde A - aI)\xi + u = 0.$$
For $a > \rho(\tilde A)$, the matrix $\tilde A - aI$ is invertible, so we have
$$\xi = -(\tilde A - aI)^{-1}u.$$
Then since $\det(\tilde A - aI)$ is a polynomial in $a$ of degree $n - 1 \geq 2$,
\begin{align*}
    ||\xi||^2& = ||(\tilde A - aI)^{-1}u||^2 = \left\|\frac{(\adj(\tilde A) - aI)u}{ \det(\tilde A-a I)}\right\|^2 
    = \frac{||\adj(\tilde A)u||^2 - 2au^T\adj(\tilde A)u + a^2||u||^2}{\det(\tilde A-aI)^2}
\end{align*}
approaches zero as $a \to \infty$. Thus, we likewise have $||\xi|| < 1$ for all sufficiently large $a$.
\end{proof}

\subsection[A result for 3 x 3 matrices]{A result for $3\times 3$ matrices}

\begin{lemma}\label{lem:symmetric}
Let 
$$A=\left[\begin{array}{ccc} 0 & c  & 0\\ d & 0 & 0 \\ v_1 & v_2 & a\end{array} \right], \quad \text{where} \quad cd \geq 0.$$
Then
$$\sigma_{bd}(A)= \left\{\begin{array}{cc} \frac{1}{2} \left(\pm \sqrt{\frac{c}{c+d}}v_1 \pm \sqrt{\frac{d}{c+d}} v_2 +a +\sqrt{cd}  \right) & \textrm{if $\pm\sqrt{\frac{c}{c+d}}v_1\pm \sqrt{\frac{d}{c+d}} v_2 + a - \sqrt{cd} \geq 0$}\\
\frac{1}{2} \left(\pm \sqrt{\frac{c}{c+d}}v_1 \mp \sqrt{\frac{d}{c+d}} v_2 +a -\sqrt{cd}   \right) & \textrm{if $\pm\sqrt{\frac{c}{c+d}}v_1\mp \sqrt{\frac{d}{c+d}} v_2 + a + \sqrt{cd} \geq 0$}
\end{array}\right..
$$
\end{lemma}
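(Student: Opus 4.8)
The plan is to feed $A$ directly into the boundary characterization of \cref{thm:boundary}. Write $A$ in block form as $A=\bmat{\tilde A & 0 \\ v^T & a}$ with $\tilde A=\bmat{0 & c \\ d & 0}$, $v=[v_1,v_2]^T$, and --- crucially --- top-right block $u=0$. By \cref{thm:boundary}, $\lambda\in\sigma_{bd}(A)$ if and only if there exist a unit vector $\xi\in\R^2$ and a scalar $s\ge 0$ with $(A-\lambda I_3)\bmat{\xi\\1}=s\bmat{-\xi\\1}$. Setting $\mu:=\lambda-s$, the first two rows of this system collapse (because $u=0$) to $(\tilde A-\mu I_2)\xi=0$, so $\xi$ must be a unit eigenvector of $\tilde A$ for the eigenvalue $\mu$; since $cd\ge 0$, the spectrum of $\tilde A$ is exactly $\{\sqrt{cd},-\sqrt{cd}\}$, so $\mu=\pm\sqrt{cd}$. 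The last row reads $v^T\xi+a-\mu-2s=0$, which gives $s=\tfrac12(v^T\xi+a-\mu)$ and hence $\lambda=\mu+s=\tfrac12(\mu+a+v^T\xi)$, with the requirement $s\ge 0$ becoming $v^T\xi+a-\mu\ge 0$.

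It then only remains to write down the unit eigenvectors of $\tilde A$ and substitute. For either eigenvalue $\mu\in\{\sqrt{cd},-\sqrt{cd}\}$, a unit eigenvector of $\tilde A$ is $\pm\tfrac{1}{\sqrt{c(c+d)}}(c,\mu)^T$. Rewriting its entries $\tfrac{c}{\sqrt{c(c+d)}}$ and $\tfrac{\mu}{\sqrt{c(c+d)}}$ in terms of $\sqrt{\tfrac{c}{c+d}}$ and $\sqrt{\tfrac{d}{c+d}}$ (see the caveat below) and plugging into $\lambda=\tfrac12(\mu+a+v^T\xi)$ together with $v^T\xi+a-\mu\ge 0$ produces, for $\mu=\sqrt{cd}$, exactly the first case of the displayed formula, and for $\mu=-\sqrt{cd}$, the second case --- in each case all the $\pm$ symbols represent the single overall sign of $\xi$. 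Taking the union over the two eigenvalues and the two signs of $\xi$ then gives the claimed description of $\sigma_{bd}(A)$, each candidate value being retained precisely when its displayed inequality (i.e.\ the condition $s\ge 0$) holds.

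The delicate point, and the step I expect to be the main obstacle, is the sign bookkeeping hidden in the previous paragraph: turning $\tfrac{c}{\sqrt{c(c+d)}}$ into $\pm\sqrt{\tfrac{c}{c+d}}$ and $\tfrac{\pm\sqrt{cd}}{\sqrt{c(c+d)}}$ into $\pm\sqrt{\tfrac{d}{c+d}}$ requires care with the (common) sign of $c$ and $d$, the sign of $c+d$, and which eigenvalue gets paired with which radical, so that the $+\sqrt{cd}$ branch ends up attached to the ``same-sign'' combination $\sqrt{\tfrac{c}{c+d}}v_1+\sqrt{\tfrac{d}{c+d}}v_2$ and the $-\sqrt{cd}$ branch to the ``opposite-sign'' one. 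One must also dispatch the degenerate cases $c=0$ or $d=0$ separately: then $\tilde A$ is nilpotent with the single eigenvalue $0$ and a unique unit eigenvector up to sign, and the two cases of the formula degenerate to the same set. Once the normalization and signs are settled, the rest is the routine substitution described above.
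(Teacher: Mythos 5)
Your proposal takes essentially the same route as the paper's proof: both apply \cref{thm:boundary}, use the zero top-right block to reduce the first two rows to $\tilde A\xi=\mu\xi$ with $\|\xi\|=1$ (hence $\mu=\pm\sqrt{cd}$), and then read off $s=\tfrac12(v^T\xi+a-\mu)\ge 0$ and $\lambda=\mu+s$ from the last row before substituting the normalized eigenvectors. The sign bookkeeping and the degenerate case $c=0$ or $d=0$ that you flag as the delicate point are likewise left implicit in the paper's own computation (which in effect assumes $c,d>0$, e.g.\ writing $\xi_1=\frac{c}{\pm\sqrt{cd}}\xi_2=\pm\sqrt{c/d}\,\xi_2$ and dividing by $d$), so your outline matches the published argument in both substance and level of detail.
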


\begin{proof}
Let $\lambda \in \sigma_{bd}(A)$. Then $\lambda=\mu+s$ with $s\geq 0$, and there exists $\xi=[\xi_1, \xi_2]^T$ with $\|\xi\|=1$ such that
$$-\mu \xi_1 + c\xi_2=0$$
$$d\xi_1 -\mu \xi_2 =0$$
$$v_1 \xi_1+v_2\xi_2 +a - \mu-2s =0.$$

Assume that $\mu=0$. Then from the first equation, we get $\xi_2=0$ since $c \neq 0$. From the second equation, we get $\xi_1$ since $d\neq 0$, a contradiction as $\xi \neq 0$. Thus, $\mu \neq 0$, and the two first equations yield 
$$(cd -\mu^2)\xi_2=0.$$
Since $\xi_2=0$ would imply $\xi_1=0$, contradicting $\|\xi\|=1$, we have $\mu = \pm \sqrt{cd}$ and $\xi_1= \frac{c}{\pm \sqrt{cd}} \xi_2= \pm \sqrt{\frac{c}{d}} \xi_2$.  Thus, since $\|\xi\|=1$, we have
$$1=\xi_1^2 + \xi_2^2 = \frac{c+d}{d} \xi_2^2.$$
Hence, if $\mu= \sqrt{cd}$, then
$$\xi_1= \pm \sqrt{\frac{c}{c+d}} \quad \text{and} \quad \xi_2= \pm \sqrt{\frac{d}{c+d}},$$
and if $\mu = -\sqrt{cd}$, then
$$\xi_1= \pm \sqrt{\frac{c}{c+d}}\quad \text{and} \quad \xi_2= \mp \sqrt{\frac{d}{c+d}}.$$

From the third equation, we get
$$ v_1\xi_1 + v_2 \xi_2+a \mp \sqrt{cd} - 2s =0,$$
or equivalently,
$$s = \frac{(v_1\xi_1+ v_2\xi_2) + a \mp \sqrt{cd}}{2},$$
which yields the claimed boundary L-eigenvalues $\lambda = \mu + s$ with the condition $s \geq 0$.
\end{proof}

\end{document}